\newtheorem{theorem}{Theorem}[section]
\newtheorem{prop}[theorem]{Proposition}
\newtheorem{lem}[theorem]{Lemma}
\newtheorem{cor}[theorem]{Corollary}
\newtheorem{rem}[theorem]{Remark}
\newtheorem{exa}[theorem]{Example}
\newtheorem{defi}[theorem]{Definition}
\newtheorem{assum}[theorem]{Assumptions}
\newtheorem{Atheorem}{Theorem}[section]
\newtheorem{Aprop}[Atheorem]{Proposition}
\newtheorem{Alem}[Atheorem]{Lemma}
\newtheorem{Acor}[Atheorem]{Corollary}
\newtheorem{Arem}[Atheorem]{Remark}
\def\gl{\buildrel \rm def\over =}
\DeclareMathOperator{\one}{\mathbbm{1}}
\begin{document}

\allowdisplaybreaks

\title{\bfseries Low Frequency L\'{e}vy Copula Estimation}

\author{%
    \textsc{Christian Palmes}
    \thanks{Lehrstuhl IV, Fakult\"at f\"ur Mathematik, Technische Universit\"at Dortmund,
              D-44227 Dortmund, Germany,
              \texttt{christian.palmes@math.tu-dortmund.de}}
    }


\maketitle

\begin{abstract}
Let $X$ be a $d$-dimensional L\'{e}vy process with L\'{e}vy triplet $(\Sigma,\nu,\alpha)$ and $d\geq 2$. Given the low frequency observations $(X_t)_{t=1,\ldots,n}$, the dependence structure of the jumps of $X$ is estimated. The L\'{e}vy measure $\nu$ describes the average jump behavior in a time unit. Thus, the aim is to estimate the dependence structure of $\nu$ by estimating the L\'{e}vy copula $\mathfrak{C}$ of $\nu$, cf. Kallsen and Tankov \cite{KalTan}.

We use the low frequency techniques presented in a one dimensional setting in Neumann and Rei{\ss} \cite{NeuRei} and Nickl and Rei{\ss} \cite{NicRei} to construct a L\'{e}vy copula estimator $\widehat{\mathfrak{C}}_n$ based on the above $n$ observations. In doing so we prove
$$\widehat{\mathfrak{C}}_n\to \mathfrak{C},\quad n\to\infty$$
uniformly on compact sets bounded away from zero with the convergence rate $\sqrt{\log n}$. This convergence holds under quite general assumptions, which also include L\'{e}vy triplets with $\Sigma\neq 0$ and $\nu$ of arbitrary Blumenthal-Getoor index $0\leq\beta\leq 2$. Note that in a low frequency observation scheme, it is statistically difficult to distinguish between infinitely many small jumps and a Brownian motion part. Hence, the rather slow convergence rate $\sqrt{\log n}$ is not surprising. 

In the complementary case of a compound Poisson process (CPP), an estimator $\widehat{C}_n$ for the copula $C$ of the jump distribution of the CPP is constructed under the same observation scheme. This copula $C$ is the analogue to the L\'{e}vy copula $\mathfrak{C}$ in the finite jump activity case, i.e. the CPP case. Here we establish
$$\widehat{C}_n \to C,\quad n\to\infty$$
with the convergence rate $\sqrt{n}$ uniformly on compact sets bounded away from zero. 

Both convergence rates are optimal in the sense of Neumann and Rei{\ss} \cite{NeuRei}.
\end{abstract}

{\bf Keywords:} statistical inference for jumps, multidimensional L\'{e}vy processes, L\'{e}vy measure, L\'{e}vy copula, copula, low-frequency data \\

{\bf MSC 2010 Classification:} 60G51,  62G20, 62H12\\

\section{Introduction}
In this paper, we analyze the dependence structure of jumps in a multidimensional L\'{e}vy process. This L\'{e}vy process is discretely observed in a \textit{low frequency} scheme. To be more precise, let $(X_t)_{t\geq 0}$ be a $d$-dimensional L\'{e}vy process on a probability space $(\Omega,\mathcal{F},P_{\Sigma,\nu,\alpha})$ with the L\'{e}vy triplet $(\Sigma,\nu,\alpha)$. Based on the equidistant observations $(X_t(\omega))_{t=1,2,\ldots,n}$ for a given realization $\omega\in\Omega$, we intend to estimate the dependence structure of the jumps between the coordinates of $X$. Hence, we have a statistical problem. Next, a rigorous formulation of what is meant by this dependence structure is given:

\subsection{The concept of a L\'{e}vy Copula}
First, we state a well-known concept proposed by Sklar \cite{Skl}. In this context, see also the monograph of Nelsen \cite{Nel}. Given $d$ random variables $Y_1,\ldots, Y_d\,:\,(\widetilde{\Omega},\widetilde{\mathcal{F}},\widetilde{P}) \to (\mathds{R},\mathcal{B})$, a well-known concept to describe the dependence structure within $(Y_1,\ldots,Y_d)$ is provided by its copula $C_{Y_1,\ldots,Y_d}$. This is a $d$ dimensional distribution function with uniform margins, such that we have
$$\widetilde{P}(Y_1\leq y_1, \ldots Y_d\leq y_d) = C_{Y_1,\ldots,Y_d}(\widetilde{P}(Y_1\leq y_1),\ldots, \widetilde{P}(Y_d\leq y_d)),\quad y_1,\ldots y_d\in\mathds{R}.$$
Thus, a copula provides in a certain sense the additional information that is needed to obtain the vector distribution from the marginal distributions. However, in this paper, we deal with a stochastic process 
$$X\,:\,\mathds{R}_+\times \Omega \to \mathds{R}^d,\quad \mathds{R}_+\gl [0,\infty)$$
and not with only finite many real valued random variables. Generally, it is problematic to determine the meaning of the dependence structure of $X$ or even the dependence structure between the coordinates of the jumps of $X$. Nevertheless, in the case of a \textit{L\'{e}vy process} $X$, a natural approach is given by Kallsen and Tankov \cite{KalTan} which uses the fact that $X$ is characterized by its L\'{e}vy triplet $(\Sigma,\nu,\alpha)$. Here, $\nu$ describes the jumps of $X$ in the sense that
$$\nu(A) = E|\{t\in [0,1]\,:\,\Delta X_t\in A\}|,\quad A\in\mathcal{B}(\mathds{R}^d),$$
cf. Sato \cite{Sat}[Theorem 19.2]. Here, we suppose as usual that $X$ has c\`adl\`ag paths and
$$\Delta X_t(\omega) \gl X_t(\omega) - X_{t-}(\omega) \gl X_t(\omega)-\lim_{s\uparrow t,\,s < t} X_s(\omega),\quad \omega\in \Omega.$$
With the jump structure of $X$, we mean the dependence structure of $\nu$. Observe the problem that $\nu$ is in general not a probability measure, so that the copula concept cannot be applied to $\nu$. However, it is at least known that
$$\int_{\mathds{R}^d} |x|^2\wedge 1\,\nu(dx) < \infty$$
holds which implies $\nu(A)<\infty$ for all $A\in\mathcal{B}(\mathds{R}^d)$ with $0\notin \overline{A}$. Thus,  $0\in\mathds{R}^d$ is the only possible singular point. Based on these facts, Kallsen and Tankov \cite{KalTan} introduced the concept of \textit{L\'{e}vy-copulas}. Compare particularly Definition 2.1 - 3.4 and Theorem 3.6 in \cite{KalTan} for the general definition of a L\'{e}vy copula and the statement of Sklars theorem for L\'{e}vy copulas. Our future assumptions in this paper are going to ensure that the L\'{e}vy copula $\mathfrak{C}$ has always the special shape 
\begin{equation} \label{introLevCopEq}
\mathfrak{C}(u,v) =
\begin{cases}
U(U_1^{-1}(u),U_2^{-1}(v)), &u,v > 0, \\
0, & u\leq 0 \text{ or } v\leq 0
\end{cases}
\end{equation}
where 
$$U(x,y)\gl \nu([x,\infty)\times [y,\infty)),\quad U_1(x)\gl \nu([x,\infty)\times\mathds{R}_+),\quad U_2(y)\gl\nu(\mathds{R}_+\times [y,\infty)),\quad x,y\in\mathds{R}_+.$$

At this point we can specify the aim of this paper: Our Aim is to construct and investigate an estimator for the L\'{e}vy copula of $\nu$ based on low frequency observations. The only existing reference in this context is, to our best knowledge, the unpublished paper of Schicks \cite{Sch}. This paper, however, only deals with the compound Poisson process case by means of the deconvolution techniques in Buchmann, Gr\"{u}bel \cite{BucGru}. Note that also B\"{u}cher, Vetter \cite{BucVet} and Laeven \cite{Lae} and Krajina, Laeven \cite{KraLae} have published relevant information close to this subject. Nevertheless, all their approaches work with the following high frequency observation scheme:
$$(X_t)_{t=\Delta_n,2\Delta_n,\ldots, n\Delta_n},\quad \Delta_n\to 0,\quad n\Delta_n\to\infty$$
which results in a completely different analysis than our low frequency observation scheme. Our approach is mostly motivated by Neumann, Rei{\ss} \cite{NeuRei} and Nickl, Rei{\ss} \cite{NicRei} which provide the required low frequency techniques for our needs.

\subsection{Organization of this paper}
In \textit{Section \ref{preSec}}, we state an estimator $\widehat{\nu}_n$ for the L\'{e}vy measure $\nu$, which is based on the low frequency observations $(X_t)_{t=1,2,\ldots,n}$. Our assumptions in this section imply that the second moment of $(\widehat{\nu}_n)$ and $\nu$ exist, i.e.
$$\int_{\mathds{R}^d} |x|^2\,\nu(dx) < \infty,\quad \int_{\mathds{R}^d} |x|^2\,\widehat{\nu}_n(\omega)(dx) <\infty,\quad n\in\mathds{N},\quad \omega\in\Omega.$$
We prove in Theorem \ref{nuWeakConvTh} the weak convergence of Borel measures
$$|x|^2\widehat{\nu}_n(dx) {\buildrel w \over \to } |x|^2\nu(dx),\quad n\to\infty.$$
This means that we have $P_{\Sigma,\nu,\alpha}$-a.s. the convergence
$$\int_{\mathds{R}^d} f(x) |x|^2\,\widehat{\nu}_n(dx) \to \int_{\mathds{R}^d} f(x)|x|^2\,\nu(dx),\quad n\to\infty$$
for all bounded, continuous functions $f$, i.e. $f\in \mathcal{C}_b(\mathds{R}^d)$. Our prove works only under the assumption $\Sigma=0$, i.e. with vanishing Brownian motion part. This is due to the fact that it is statistically hard to distinguish between the small jumps of infinite activity and the Brownian motion part. This issue is stated more precisely in Lemma \ref{uniEstLem}. Note that Neumann and Rei{\ss} \cite{NeuRei} solve this problem in the one dimensional case by estimating
$$\nu_\sigma(dx)\gl \sigma^2\delta_0(dx) + x^2\nu(dx),\quad x\in\mathds{R},\quad \sigma^2\gl \Sigma$$
instead of $\nu$. As a result, the exponent of the characteristic function gets the shape
$$\Psi_{\nu_\sigma,\alpha}(u) = iu\alpha + \int_{\mathds{R}} \frac{e^{iux}-1-iux}{x^2}\,\nu_\sigma(dx),\quad u\in\mathds{R},$$
assuming that the second moment of $\nu$ is finite, compare Section 4 in \cite{NeuRei}. Unfortunately, such a transition from $\nu$ to $\nu_\sigma$ does not work in the multidimensional case $d\geq 2$. Nevertheless, we aim to estimate the L\'{e}vy copula of $\nu$ under assumptions that do \textit{not} exclude the existence of a Brownian motion part, i.e. $\Sigma\neq 0$. Thus, we have to deal somehow with the small jumps of $X$.

This is described in \textit{Section \ref{LevCopSec}}. Here, everything is developed for the case $d=2$. This is only due to a simpler notation of the anyway high technical approach. We first construct an estimator $\widehat{N}_n$ based on the same $n$ equidistant observations as $\widehat{\nu}_n$, such that it holds under certain smoothness and decay conditions on $\nu$
\begin{equation} \label{treatSmallJmpEq}
\sup_{(a,b)\in\mathfrak{R}} \eta(a,b) \left|\nu([a,\infty)\times [b,\infty)) - \widehat{N}_n(a,b)\right| = O_{P_{\Sigma,\nu,\alpha}}\left(\frac{(\log\log n)^2}{\sqrt{\log n}}\right),\quad n\to\infty
\end{equation}
with
$$\mathfrak{R}\gl [0,\infty)^2\backslash\{(0,0)\},\quad \eta(a,b)\gl |(a,b)|^2\wedge |(a,b)|^4.$$
This is proven in Theorem \ref{LevyEstTh}. Note that the right hand side of (\ref{treatSmallJmpEq}) is independent of $(a,b)\in\mathfrak{R}$. If $|(a,b)|\to 0$, $\eta(a,b)\asymp |(a,b)|^4\to 0$ slows down the convergence speed in (\ref{treatSmallJmpEq}). Vice versa $|(a,b)|\to \infty$ implies $\eta(a,b)\asymp |(a,b)|^2\to\infty$ which accelerates the convergence speed. This way of treating the small jumps is sufficient of getting satisfying results concerning the estimation of the L\'{e}vy copula. Note that (\ref{treatSmallJmpEq}) also yields estimations for $\nu([a,\infty)\times\mathds{R}_+)$ resp. $\nu(\mathds{R}_+\times [b,\infty))$ by setting $a>0,\,b=0$ resp. $a=0,\,b>0$. Our assumptions in this section will ensure that the L\'{e}vy copula of $\nu$ can be written in the form (\ref{introLevCopEq}). We are capable to estimate $U,U_1,U_2$ with the use of (\ref{treatSmallJmpEq}). Our intention is to create a plug-in estimator for (\ref{introLevCopEq}), i.e. we also need an estimator for $U_k^{-1}$, $k=1,2$. This basically works by building the pseudo inverse of the estimator of $U_k$. At this point, we again have to pay attention to the small jumps. This inverting procedure is performed by Corollary \ref{LevCopInvCor} which is the stochastic counterpart of Proposition \ref{invProp}. This proposition contains the analysis needed for the inversion operation. Finally, Theorem \ref{LevyGeneralTheorem} states that the resulting plug-in estimator $\widehat{\mathfrak{C}}_n$ converges uniformly on compact sets bounded away from zero with the convergence rate $\sqrt{\log n}$, i.e.: It holds for two arbitrary and fixed numbers $0<a<b<\infty$ the asymptotic
\begin{equation} \label{CopConvEq}
\sup_{a\leq u,v\leq b} |\mathfrak{C}(u,v) - \widehat{\mathfrak{C}}_n(u,v)| = O_{P_{\Sigma,\nu,\alpha}}\left(\frac{(\log\log n)^9}{\sqrt{\log n}}\right).
\end{equation}
The term $(\log\log n)^9$ in (\ref{CopConvEq}) is not relevant in the sense that we have
$$\frac{(\log\log n)^9}{(\log n)^\epsilon} \to 0,\quad n\to\infty$$
for all $\epsilon>0$. Note that the convergence in (\ref{CopConvEq}) holds in a wide, non-pathologic class of L\'{e}vy triplets which contains L\'{e}vy measures of every Blumenthal Getoor index $0\leq \beta\leq 2$, i.e. the test can separate the small jumps from the Brownian motion part in a low frequency setting even in the case $\beta=2$. This is proven in Corollary \ref{BGCor}. Furthermore, observe that $[a,b]\subseteq [0,\infty)^2$ in (\ref{CopConvEq}) is bounded away from zero. However, we have to treat the small jumps tending to zero in order to estimate, for example, $U_k^{-1}$, $k=1,2$, compare the proof of Theorem \ref{LevyGeneralTheorem}. Apart from that, our technical approach would easily yield a similar treatment of $[a,b]^2$ in the case $a\downarrow 0$, $b\uparrow\infty$ as in (\ref{treatSmallJmpEq}). The respective $\eta$ would then, however, depend on $U$, i.e. on $\nu$ which is the unknown estimating entity. Hence, such convergence rates are not statistically feasible and thus, we have not calculated them.

Finally, in \textit{Section \ref{CPPSec}}, we apply the techniques developed in Section \ref{LevCopSec} to the compound Poisson process (CPP) case. For simplicity we assume that the intensity $\Lambda=\nu(\mathds{R}^2)$ is known. We propose an estimator $\widehat{C}_n$ for the copula $C$ of the probability measure $\Lambda^{-1}\nu$, which is based on the same $n$ low frequency observations ($t=1,2,\ldots,n$). For this purpose, we show that everything developed in the previous Section \ref{LevCopSec} also works in this case. Here, we obtain the better and natural convergence rate $\sqrt{n}$ as expected. Namely, we show in Theorem \ref{ComPoiCopTh} that 
$$\sup_{a\leq u,v\leq b} |C(u,v)-\widehat{C}_n(u,v)| = O_{P_{\nu,\alpha}}\left(\frac{(\log n)^{10}}{\sqrt{n}}\right),\quad n\to\infty$$
holds under certain assumptions in the CPP case.

Neumann and Rei{\ss} \cite{NeuRei}[Theorem 4.4] prove in a one dimensional setting that, in the case of a non-vanishing Brownian motion part, a logarithmic convergence rate for estimating $\nu_\sigma$ is optimal. Furthermore, $\sqrt{n}$ is the optimal rate in the CPP case. Hence, the convergence rates of our L\'{e}vy copula estimators can be considered to be optimal in the sense that the optimal rates in the one dimensional setting still hold in the multidimensional setting \textit{and} after an inversion operation.

\subsection{Notations}
In what follows we summarize some frequently used notations.
\begin{figure}[h]
\begin{tabular}{ll}
$\mathds{N} = \{1,2,\ldots\}$ & Natural numbers \\
$\mathds{N}_0 = \mathds{N}\cup \{0\}$ & Natural numbers including zero \\
$\mathds{R}_+ = [0,\infty)$ & Nonnegative real numbers \\
$\mathds{R}^\ast = \mathds{R}\backslash\{0\}$ & Real numbers without zero \\
$\mathds{R}_+^\ast = \mathds{R}^\ast\cap \mathds{R}_+$ & Positive real numbers \\
$\mathfrak{R} = \mathds{R}_+^2\backslash\{(0,0)\}$ & First quadrant in $\mathds{R}^2$ without the origin \\
$z=\Re(z) + i \Im(z)$ & Real and imaginary part of a complex number $z\in\mathds{C}$ \\
$|z|,\,|z|_\infty$ & Euclidean norm and maximum norm of any $z\in\mathds{C}$ \\
$\mathcal{R}_+(z) = \mathcal{R}(z)\vee  0$ & Positive real part of a complex number $z\in\mathds{C}$ \\
$\mathcal{B}(\mathds{R}^d)$ & Borel sets of $\mathds{R}^d$ \\
$\mathcal{C}^k,\,\mathcal{C}^k(\mathds{R}^d)$ & Complex valued functions with $k$ continuous partial derivatives \\
$\mathcal{C}$ & Continuous complex valued functions \\
$\mathcal{C}_b$ & Complex valued continuous, bounded functions \\
$L^1,L^2$ & Function spaces of (quadratic) integrable functions \\
$\mathcal{F}f$ resp. $\mathcal{F}\mu$ & Fourier transform of a suitable function $f$ resp. measure $\mu$\\
$\lambda^d$ & $d$-dimensional Lebesgue measure \\
$g_{a,b}(x_1,x_2) = \frac{\one_{[a,\infty)\times [b,\infty)}(x_1,x_2)}{x_1^4+x_2^4}$ & Useful truncating function for our later needs, \\
& $(x_1,x_2)\in\mathds{R}^2,\quad (a,b)\in\mathfrak{R}.$  
\end{tabular}
\end{figure}

\newpage
Moreover, we will to use the abbreviations
\begin{figure}[h]
\begin{tabular}{ll}
CPP & Compound Poisson process \\
BGi & Blumenthal-Getoor index \\
c\`adl\`ag & right continuous function with existing left limits
\end{tabular}
\end{figure}

\noindent
and write
$$f\lesssim g\quad :\iff\quad \exists C>0\,:\,f(x)\leq Cg(x),\,\forall x\in M$$
for two functions $f,g\,:\,M\to \mathds{R}_+$ on any set $M$. Finally, recall for two families of random variables $(X_n)_{n\in\mathds{N}}$ and $(Y_n)_{n\in\mathds{N}}$ with $P(X_n=0)=0$, $n\in\mathds{N}$ the notation
$$Y_n = O_P(X_n)\quad :\iff\quad \forall\delta>0\,\,\,\exists K>0\,:\, \sup_{n\in\mathds{N}}P\left(\left|\frac{Y_n}{X_n}\right|\geq K\right)\leq \delta.$$

\section{Some preliminary considerations} \label{preSec}
Let $(X_t)_{t\geq 0}$ be a $d$-dimensional L\'{e}vy process on a probability space $(\Omega,\mathcal{F},P_{\Sigma,\nu,\alpha})$ with the L\'{e}vy triplet $(\Sigma,\nu,\alpha)$. Based on the equidistant observations $(X_t(\omega))_{t=1,2,\ldots,n}$ for some fixed path $\omega\in\Omega$, we intend to estimate the L\'{e}vy triplet $(\Sigma,\nu,\alpha)$. First of all note that it is statistical not possible to distinguish  between the existence of a Brownian motion part and an accumulation of infinitely many jumps in a uniform consistent way. This is explained by the following lemma which is a generalization of Remark 3.2 in Neumann, Rei{\ss} \cite{NeuRei}. We give a detailed proof in order to get a starting point in this topic.
\begin{lem} \label{uniEstLem}
Set $d=1$ and write $\sigma=\Sigma$. Then we have
$$\sup_{\sigma,\nu,\alpha} P_{\sigma,\nu,\alpha} \left(|\widehat{\sigma} - \sigma| \geq \frac{1}{2}\right) \geq \frac{1}{2}$$
where $\widehat{\sigma}$ is any real valued random variable. For example $\widehat{\sigma}$ can be any estimator based on the above low frequency observations.
\end{lem}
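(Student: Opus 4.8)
Here is the plan. I read ``$\widehat{\sigma}$ is any estimator based on the above low frequency observations'' as meaning that $\widehat{\sigma}$ is a measurable function of the observation vector $(X_1,\dots,X_n)$; some restriction of this kind is needed, as already the continuous part of the quadratic variation of the whole path recovers $\sigma^2$ exactly. With that understood I would run a two-point argument in the spirit of Le Cam: exhibit two L\'{e}vy triplets whose laws of $(X_1,\dots,X_n)$ are close in total variation but whose Gaussian coefficients differ by $1$, so that no estimator can be within $1/2$ of both with probability exceeding $1/2$.

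Take the competitor $\theta_1=(1,0,0)$, so that $X$ is a standard Brownian motion, and approximate it in the observation experiment by triplets $\theta_0^{(k)}=(0,\nu_k,0)$ with \emph{vanishing} Gaussian part,
$$\nu_k(dx)\ :=\ \frac{1}{2\varepsilon_k^2}\big(\delta_{\varepsilon_k}+\delta_{-\varepsilon_k}\big)(dx)\ +\ \frac{c_k}{\pi x^2}\,dx,\qquad \varepsilon_k\downarrow 0,\quad c_k:=\sqrt{\varepsilon_k}.$$
The first, symmetric compound Poisson, part has $\int x^2\,\nu_k(dx)=1$ and characteristic exponent $\varepsilon_k^{-2}(\cos(\varepsilon_k u)-1)\to-u^2/2$ (pointwise in $u$); its time-$1$ law $\rho_k$ is carried by the lattice $\varepsilon_k\mathbb{Z}$ and tends weakly to $\mathcal{N}(0,1)$. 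The second part is the L\'{e}vy measure of a symmetric Cauchy process of scale $c_k\to 0$, with time-$1$ law $\sigma_k$ (density $p_k$, Fourier transform $e^{-c_k|u|}$), included only to smooth the increment law and to make the Fourier argument below converge. Write $\mu_k=\rho_k\ast\sigma_k$ and $\mu_1=\mathcal{N}(0,1)$ for the laws of $X_1$ under $\theta_0^{(k)}$ and $\theta_1$. Since the increments are i.i.d., $\mathrm{Law}(X_1,\dots,X_n)$ is a fixed bijective image of the $n$-fold product measure, so the total variation distance of the two observation laws is $\le n\|\mu_k-\mu_1\|_{TV}=:\delta_{k,n}$. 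For any admissible $\widehat{\sigma}$ put $A:=\{\widehat{\sigma}\in(1/2,3/2)\}$; then $A\subseteq\{|\widehat{\sigma}-1|<1/2\}$ and $A\subseteq\{|\widehat{\sigma}-0|\ge 1/2\}$, whence
$$\sup_{\sigma,\nu,\alpha}P_{\sigma,\nu,\alpha}\big(|\widehat{\sigma}-\sigma|\ge\tfrac12\big)\ \ge\ \tfrac12\Big(P_{\theta_1}\big(|\widehat{\sigma}-1|\ge\tfrac12\big)+P_{\theta_0^{(k)}}\big(|\widehat{\sigma}|\ge\tfrac12\big)\Big)\ \ge\ \tfrac12\big(1-\delta_{k,n}\big),$$
using $P_{\theta_1}(|\widehat{\sigma}-1|\ge\tfrac12)=1-P_{\theta_1}(A)$, $P_{\theta_0^{(k)}}(|\widehat{\sigma}|\ge\tfrac12)\ge P_{\theta_0^{(k)}}(A)$ and $P_{\theta_1}(A)-P_{\theta_0^{(k)}}(A)\le\delta_{k,n}$. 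Letting $k\to\infty$ then proves the lemma, \emph{provided} $\|\mu_k-\mu_1\|_{TV}\to 0$.

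This last point is the whole difficulty, and it is what the Cauchy component is for: without it the increment law $\rho_k$ is purely atomic and at total variation distance $1$ from $\mathcal{N}(0,1)$, whereas $\mu_k=\rho_k\ast\sigma_k$ has a density $q_k\in L^2$ and, since $\widehat{\mu_k}(u)=e^{-c_k|u|}\widehat{\rho_k}(u)\to e^{-u^2/2}$ pointwise, $\mu_k\Rightarrow\mathcal{N}(0,1)$. I would bound, for fixed $R>0$,
$$\|\mu_k-\mu_1\|_{TV}\ =\ \tfrac12\|q_k-\varphi\|_{L^1}\ \le\ \tfrac{1}{2}\sqrt{2R}\,\|q_k-\varphi\|_{L^2}\ +\ \tfrac12 P_{\theta_0^{(k)}}\big(|X_1|>R\big)\ +\ \tfrac12 P\big(|\mathcal{N}(0,1)|>R\big)$$
with $\varphi$ the standard normal density; the last two terms are made small by first letting $k\to\infty$ (using $\mu_k\Rightarrow\mathcal{N}(0,1)$ and that $\pm R$ are continuity points) and then $R\to\infty$. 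For the $L^2$ term Plancherel gives
$$\|q_k-\varphi\|_{L^2}^2\ =\ \frac{1}{2\pi}\int_{\mathbb{R}}\big|e^{-c_k|u|}\widehat{\rho_k}(u)-e^{-u^2/2}\big|^2\,du,\qquad \widehat{\rho_k}(u)=\exp\big(\varepsilon_k^{-2}(\cos(\varepsilon_k u)-1)\big).$$
On $|u|\le\pi/\varepsilon_k$ the elementary inequality $\cos\theta-1\le-\tfrac{2}{\pi^2}\theta^2$ ($|\theta|\le\pi$) yields $0\le\widehat{\rho_k}(u)\le e^{-2u^2/\pi^2}$ and also $e^{-u^2/2}\le e^{-2u^2/\pi^2}$, so the integrand is $\le 4e^{-4u^2/\pi^2}$ uniformly in $k$ and tends to $0$ pointwise; this part vanishes by dominated convergence. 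On $|u|>\pi/\varepsilon_k$ the integrand is $\le 2e^{-2c_k|u|}+2e^{-u^2}$, whose integral is at most $2c_k^{-1}e^{-2\pi c_k/\varepsilon_k}+2\int_{|u|>\pi/\varepsilon_k}e^{-u^2}\,du=2\varepsilon_k^{-1/2}e^{-2\pi/\sqrt{\varepsilon_k}}+o(1)\to 0$ thanks to $c_k=\sqrt{\varepsilon_k}$ (any $c_k$ with $c_k\to 0$ and $c_k/\varepsilon_k\to\infty$ works). Hence $\|q_k-\varphi\|_{L^2}\to 0$, so $\|\mu_k-\mu_1\|_{TV}\to 0$, and the proof is complete.

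The step I expect to be the real obstacle is this final total variation estimate: the discrete support of the compound Poisson approximation of the Gaussian increment is a genuine obstruction — it keeps the total variation distance from $\mathcal{N}(0,1)$ equal to $1$ — which is why the small independent stable (here Cauchy) perturbation is indispensable and why the jump scale $\varepsilon_k$ and the smoothing scale $c_k$ must be balanced correctly; everything else is bookkeeping. (If one only insists on \emph{continuous} functionals $\widehat{\sigma}$ of the observations, the total variation step can be skipped entirely, since weak convergence of the observation laws together with the portmanteau theorem applied to the open set $\{\widehat{\sigma}>1/2\}$ already gives the conclusion.)
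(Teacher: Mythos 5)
Your proof is correct, and it follows the same overall strategy as the paper's: a Le Cam two-point lower bound between the Gaussian triplet $\theta_1=(1,0,0)$ and a sequence of triplets with vanishing Gaussian part whose one-step increment laws converge to $\mathcal{N}(0,1)$ in total variation, followed by the elementary observation that no single random variable can be within $1/2$ of both $0$ and $1$ with high probability under two measures that are TV-close. Where you differ is in the construction of the competitor and, consequently, in how the TV estimate is carried out. The paper takes the symmetric $(2-\tfrac1m)$-stable law $P_m$, which has $\Sigma=0$, an integrable characteristic function, and hence a density $f_m$; pointwise convergence $f_m\to f_\infty$ together with $\int f_m=\int f_\infty=1$ gives $L^1$ (i.e.\ TV) convergence directly by Scheff\'e/Riesz, with essentially no computation. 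You instead approximate the Gaussian by a two-point symmetric compound Poisson law and then superpose a small independent Cauchy component to destroy the lattice support; this forces you to run a Plancherel/Cauchy–Schwarz estimate with a scale-matching condition $c_k\to 0$, $c_k/\varepsilon_k\to\infty$. That is genuinely more work, but it makes explicit precisely why some absolute continuity must be built in (the bare CPP approximation sits at TV distance $1$ from $\mathcal{N}(0,1)$), a point the stable-law choice handles invisibly. You are also more scrupulous than the paper on two bookkeeping matters: you pass explicitly through the $n$-fold product bound $\lVert\mu_k^{\otimes n}-\mu_1^{\otimes n}\rVert_{TV}\le n\lVert\mu_k-\mu_1\rVert_{TV}$, whereas the paper's proof is written as though $\widehat{\sigma}$ were a function of the single observation $X_1$; and you correctly flag that ``any real valued random variable'' must be read as a measurable function of the discrete observation vector, since a functional of the full continuous path (e.g.\ the continuous part of the quadratic variation) would recover $\sigma$ exactly and make the statement false as literally worded. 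Both of these are minor but real gaps in the paper's exposition that your version fills.
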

\begin{proof}
Denote with $P_m,\,m\geq 1$ the $2-\frac{1}{m}$ symmetric stable law, i.e. the law with the characteristic function $$\varphi_m(u) = e^{-\frac{|u|^{2-\frac{1}{m}}}{2}}.$$ 
As $\varphi_m$ is Lebesgue integrable, $P_m$ has the Lebesgue density 
$$f_m(x)=\frac{1}{2\pi} \int e^{-iux} \varphi_m(u)\,du.$$ 
Now consider the total variation (TV) between $P_m$ and $P_\infty\gl \text{N}(0,1)$. Scheff\'{e}'s Lemma yields
\begin{equation} \label{TVequ}
\|P_m - P_\infty\|_{TV} = \frac{1}{2}\int|f_m(x) - f_\infty(x)|\,dx.
\end{equation}
We have
$f_m \to f_\infty$ pointwise because of
$$|f_m(x) - f_\infty(x)| \leq \int|\varphi_m(u)-\varphi_\infty(u)|\,du$$
and the integrable $L^1(\lambda^1)$ majorant
$$|\varphi_m(u) - \varphi_\infty(u)|\leq 2e^{-\frac{u^2\wedge |u|}{2}},\quad m\in\mathds{N}.$$
This implies together with
$$\int f_m(x)\,dx = \int f_\infty(x)\,dx = 1$$
and a theorem of Riesz, cf. \cite{Bau}[Theorem 15.4] that $f_m\to f_\infty$ in $L^1(\lambda)$, i.e. with (\ref{TVequ})
$$\|P_m-P_\infty\|_{TV} \to 0,\quad m\to\infty.$$
Now consider the two sets
$$A_0\gl \left\{|\widehat{\sigma}| \geq \frac{1}{2} \right\},\quad A_1\gl \left\{|\widehat{\sigma}-1| \geq \frac{1}{2} \right\}.$$
Fix $\epsilon>0$ and choose $m$ large enough, such that $\|P_m-P_\infty\|_{TV}<\epsilon$. Note that the Brownian part $\sigma$ of $P_m$ is zero and the Brownian part of $P_\infty$ is one. Assume that
$$P_m\left(|\widehat{\sigma} - \sigma| \geq \frac{1}{2}\right) = P_m(A_0) < \frac{1}{2}.$$
Then we have $P_\infty(A_0)<\frac{1}{2}+\epsilon$ which yields because of $A_0\cup A_1 = \Omega$
$$P_\infty(A_1)+\frac{1}{2}+\epsilon > P_\infty(A_0)+P_\infty(A_1) \geq P_\infty(A_0\cup A_1) = 1.$$
This results in
$$P_\infty\left(|\widehat{\sigma}-\sigma|\geq\frac{1}{2}\right) = P_\infty(A_1)\geq\frac{1}{2}-\epsilon.$$
The lemma is proven since $\epsilon>0$ was chosen arbitrarily. 
\end{proof}

We denote by
$$\widehat{\varphi}_n(u) \gl \frac{1}{n}\sum_{t=1}^n e^{i\left<u,X_t-X_{t-1}\right>},\quad u\in\mathds{R}^d$$
the empirical characteristic function of the increments and write, furthermore,
$$\varphi_{\Sigma,\nu,\alpha}(u)\gl E_{\Sigma,\nu,\alpha}\left(e^{i\left<u,X_1\right>}\right),\quad u\in\mathds{R}^d.$$
Next let $w\,:\,\mathds{R}^d\to\mathds{R}^{>0}$ denote a weight function which is specified later. For the following we only require that $w$ is bounded and vanishes at infinity. Define the weighted supremum
$$\|\psi\|_{L^\infty(w)} \gl \sup_{u\in\mathds{R}^d}\{w(u)|\psi(u)|\}$$
for mappings $\psi\,:\,\mathds{R}^d\to\mathds{C}$. The following proposition is needed in order to prove Theorem \ref{nuWeakConvTh}. This theorem yields a consistent estimator $\widehat{\nu}_n$ for the L\'{e}vy measure $\nu$ as described in the introductory section.

\begin{prop} \label{weightpointprop}
For every $(\Sigma,\nu,\alpha)$ with $E_{\Sigma,\nu,\alpha}|X_1|^4 < \infty$, there exists a $P_{\Sigma,\nu,\alpha}$ negligible set $N$, such that
\begin{equation} \label{pointconveq}
\left\|\frac{\partial^l}{\partial u_k^l}\left(\widehat{\varphi}_n(u)-\varphi_{\Sigma,\nu,\alpha}(u)\right)\right\|_{L^\infty(w)}\to 0,\quad n\to\infty,\quad l=0,1,2,\quad 1\leq k\leq d
\end{equation}
holds on $N^c$.
\end{prop}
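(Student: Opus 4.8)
The plan is to reduce (\ref{pointconveq}) to a weighted uniform strong law of large numbers for the empirical characteristic function together with its first two coordinate derivatives, and then to prove that law by treating a compact ball $\{|u|\le R\}$ and its complement separately, with the weight $w$ doing the work on the complement. Write $Z_t\gl X_t-X_{t-1}$, $t=1,\dots,n$; since $X$ is a L\'evy process, $(Z_t)_{t\ge 1}$ is an i.i.d.\ sequence with common law $P_{\Sigma,\nu,\alpha}\circ X_1^{-1}$ and $E|Z_1|^4<\infty$. For $l\in\{0,1,2\}$ and $1\le k\le d$ put $g_u^{(l,k)}(z)\gl(iz_k)^l e^{i\langle u,z\rangle}$, so that $\frac{\partial^l}{\partial u_k^l}\widehat\varphi_n(u)=\frac1n\sum_{t=1}^n g_u^{(l,k)}(Z_t)$; since $E|X_{1,k}|^l<\infty$ for $l\le 4$, differentiation under the expectation (dominated convergence with majorant $|X_{1,k}|^l$) gives $\frac{\partial^l}{\partial u_k^l}\varphi_{\Sigma,\nu,\alpha}(u)=E g_u^{(l,k)}(Z_1)$. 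With $D_n^{(l,k)}(u)\gl\frac1n\sum_{t=1}^n g_u^{(l,k)}(Z_t)-E g_u^{(l,k)}(Z_1)$ the assertion becomes $\sup_u w(u)\,|D_n^{(l,k)}(u)|\to 0$ a.s.\ for the finitely many admissible pairs $(l,k)$; I fix one and suppress it from the notation, and set $\|w\|_\infty\gl\sup_u w(u)<\infty$.

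For the \emph{tail} I use the crude bound $|g_u(z)|=|z_k|^l$, so $w(u)|D_n(u)|\le w(u)\bigl(\tfrac1n\sum_{t=1}^n|Z_{t,k}|^l+E|Z_{1,k}|^l\bigr)$. By the classical SLLN there is a $P_{\Sigma,\nu,\alpha}$-negligible set off which $\tfrac1n\sum_{t=1}^n|Z_{t,k}|^l\to E|Z_{1,k}|^l$; on its complement $\tfrac1n\sum_{t=1}^n|Z_{t,k}|^l\le M\gl 1+2E|Z_{1,k}|^l$ for all large $n$, hence $\sup_{|u|>R}w(u)|D_n(u)|\le(M+E|Z_{1,k}|^l)\sup_{|u|>R}w(u)$ eventually, and the right-hand side is deterministic and tends to $0$ as $R\to\infty$ because $w$ vanishes at infinity.

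On the \emph{compact} ball $\{|u|\le R\}$ I bound $w(u)\le\|w\|_\infty$ and prove $\sup_{|u|\le R}|D_n(u)|\to 0$ a.s.\ by a net argument. The family $\{g_u\}_u$ is uniformly Lipschitz in $u$: $|g_u(z)-g_{u'}(z)|\le|z_k|^l|\langle u-u',z\rangle|\le|z|^{\,l+1}|u-u'|$, with integrable envelope $z\mapsto|z|^{l+1}$ (as $l+1\le 3$). Fix a countable dense $Q\subseteq\mathds{R}^d$; by the SLLN there is a single negligible set off which $D_n(q)\to 0$ for every $q\in Q$ \emph{and} $\tfrac1n\sum_{t=1}^n|Z_t|^{l+1}\to E|Z_1|^{l+1}$. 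On its complement, given $R$ and $\delta>0$, pick a finite $\rho$-net $q_1,\dots,q_J\in Q$ of $\{|u|\le R\}$ with $\rho$ so small that $3\rho E|Z_1|^{l+1}<\delta/2$; for large $n$ one has $\max_j|D_n(q_j)|<\delta/2$ and $\tfrac1n\sum_{t=1}^n|Z_t|^{l+1}\le 2E|Z_1|^{l+1}$, so for $u$ in the $\rho$-ball around $q_j$,
$$|D_n(u)|\le|D_n(q_j)|+\rho\Bigl(\tfrac1n\sum_{t=1}^n|Z_t|^{l+1}+E|Z_1|^{l+1}\Bigr)<\tfrac{\delta}{2}+3\rho E|Z_1|^{l+1}<\delta.$$
Thus $\sup_{|u|\le R}|D_n(u)|\to 0$ on this complement, simultaneously for every $R\in\mathds{N}$.

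To conclude, combine the two regimes: given $\varepsilon>0$, first choose $R$ (deterministically) with $(M+E|Z_{1,k}|^l)\sup_{|u|>R}w(u)<\varepsilon/2$, then apply the compact estimate with $\delta=\varepsilon/(2\|w\|_\infty)$; off the fixed, $\varepsilon$-independent union of the two negligible sets above, $\sup_u w(u)|D_n(u)|<\varepsilon$ for all large $n$, i.e.\ $\sup_u w(u)|D_n(u)|\to 0$. Taking the union of these negligible sets over the finitely many pairs $(l,k)$ yields the set $N$. I expect the only genuinely delicate point to be this null-set bookkeeping: one must ensure that a \emph{single} almost-sure event — the joint SLLN over a fixed countable dense set, together with the SLLN for the envelopes $|Z_t|^{l+1}$ and $|Z_{t,k}|^l$ — controls the estimates for \emph{all} radii $R$ and tolerances $\varepsilon$ at once; choosing the net points from $Q$ makes this automatic, and the rest is routine estimation.
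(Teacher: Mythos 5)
Your proof is correct and fills in exactly the two‑step sketch the paper offers and then omits: a density/net argument combined with the strong law over a fixed countable dense set handles the compact ball, while the decay of the weight $w$ at infinity (together with the SLLN for the envelope $|Z_{t,k}|^l$) handles $\{|u|>R\}$. The null‑set bookkeeping you flag is treated correctly, since all invocations of the SLLN range over a single fixed countable family.
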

\begin{proof}
Suppose for example $k=1,\,l=2$. Then (\ref{pointconveq}) claims that
$$\sup_{u\in\mathds{R}^d}\left\{w(u)\left|\frac{1}{n}\sum_{t=1}^n(X_{t,1}(\omega)-X_{t-1,1}(\omega))^2e^{i\left<u,X_t(\omega)-X_{t-1}(\omega)\right>}-E_{\Sigma,\nu,b}X_{1,1}^2e^{i\left<u,X_1\right>}\right|\right\} \to 0,\quad n\to\infty$$
holds for all $\omega\in N^c$, where $X_{t,1}\in\mathds{R}$ denotes the first component of $X_t\in\mathds{R}^d$.

The proof can be done in two steps. The first one deals with the continuity of $\widehat{\varphi}_n$ and $\varphi_{\Sigma,\nu,\alpha}$. Doing so, we can replace $\mathds{R}^d$ by a countable, unbounded set. Finally a second step makes use of the weight function $w$ in order to treat the unboundedness of the remaining countable set in the first step. We omit a detailed proof since it can be done with straightforward techniques.  
\end{proof}

\begin{assum} \label{ConsEstAssum}
The L\'{e}vy triplet $(\Sigma,\nu,\alpha)$ has a vanishing Brownian part and possesses a finite fourth moment, i.e.
$$\int|x|^4 \nu(dx) < \infty,\quad \Sigma = 0.$$
\end{assum}

The above Assumptions \ref{ConsEstAssum} are motivated by Lemma \ref{uniEstLem} and Proposition \ref{weightpointprop}. Note that given the Assumptions \ref{ConsEstAssum} we do not need a truncation function in the representation of the characteristic function of the L\'{e}vy process, i.e. we are going to use the representation
\begin{equation} \label{charrepeq}
\varphi_{\nu,\alpha}(u) = \exp{\left(i\left<u,\alpha\right> + \int_{\mathds{R}^d}(e^{i\left<u,x\right>} - 1 - i\left<u,x\right>)\,\nu(dx)\right)}.
\end{equation}
Define the following metric on $\mathcal{C}^2(\mathds{R}^d):$
$$d^{(2)}(\varphi_1,\varphi_2)\gl \|\varphi_1-\varphi_2\|_{L^\infty(w)} + \sum_{k=1}^d\left\|\frac{\partial}{\partial u_k}\left(\varphi_1-\varphi_2\right)\right\|_{L^\infty(w)} + \sum_{k=1}^d\left\|\frac{\partial^2}{\partial u_k^2}\left(\varphi_1-\varphi_2\right)\right\|_{L^\infty(w)}.$$
Let Assumptions \ref{ConsEstAssum} hold. Given the equidistant observations as described at the beginning of this section, we introduce the minimum distance estimator $(\widehat{\nu}_n,\widehat{\alpha}_n)$ via
\begin{equation} \label{mindisteq}
d^{(2)}(\widehat{\varphi}_n,\varphi_{\widehat{\nu}_n,\widehat{\alpha}_n}) \leq \inf_{\nu,\alpha} d^{(2)}(\widehat{\varphi}_n,\varphi_{\nu,\alpha}) + \delta_n,\quad n\in\mathds{N},\quad\omega\in\Omega
\end{equation}
for a given sequence $\delta_n\downarrow 0$. This means that $(\widehat{\nu}_n,\widehat{\alpha}_n)$ are chosen in such a way that (\ref{mindisteq}) and the Assumptions \ref{ConsEstAssum} are fulfilled. This is exactly the multidimensional variant of (2.3) in Neumann, Rei{\ss} \cite{NeuRei}.

However, note that the estimator $\widehat{\nu}_n$ is not directly feasible because the infimum in (\ref{mindisteq}) is taken over the infinite set of all $\nu,\alpha$. This problem is solved in the next section by constructing and analyzing a more subtle and feasible estimator.

\begin{theorem} \label{nuWeakConvTh}
Let Assumptions \ref{ConsEstAssum} hold and let $w$ be continuous and vanishing at infinity. Then we have $P_{\nu,\alpha}$-a.s. the weak convergence
$$|x|^2\widehat{\nu}_n(dx)\, {\buildrel w \over \to}\, |x|^2\nu(dx).$$ 
\end{theorem}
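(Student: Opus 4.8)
The plan is to deduce the weak convergence of the weighted measures $|x|^2\widehat{\nu}_n(dx)$ from a convergence statement for the exponents of the characteristic functions, exploiting the minimum distance property \eqref{mindisteq} and the continuity of the map from exponents to measures. First I would record the key consequence of Proposition \ref{weightpointprop}: on a $P_{\nu,\alpha}$-negligible set complement we have $d^{(2)}(\widehat{\varphi}_n,\varphi_{\nu,\alpha})\to 0$, hence by \eqref{mindisteq} also $d^{(2)}(\widehat{\varphi}_n,\varphi_{\widehat{\nu}_n,\widehat{\alpha}_n})\to 0$, and therefore by the triangle inequality $d^{(2)}(\varphi_{\widehat{\nu}_n,\widehat{\alpha}_n},\varphi_{\nu,\alpha})\to 0$. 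Thus the estimated characteristic functions converge to the true one, together with their first and second partial derivatives, uniformly in the weighted sup-norm $\|\cdot\|_{L^\infty(w)}$.

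Next I would pass from the characteristic functions to their exponents. Since $\varphi_{\nu,\alpha}$ is continuous, non-vanishing and equals $1$ at the origin, there is a neighbourhood of $0$ on which the distinguished logarithm $\Psi_{\nu,\alpha}(u)=i\langle u,\alpha\rangle+\int(e^{i\langle u,x\rangle}-1-i\langle u,x\rangle)\,\nu(dx)$ is well defined and real-analytically tied to $\varphi_{\nu,\alpha}$; the same holds for $\Psi_{\widehat{\nu}_n,\widehat{\alpha}_n}$ once $n$ is large, because $d^{(2)}$-convergence forces $\varphi_{\widehat{\nu}_n,\widehat{\alpha}_n}$ to be uniformly close to $\varphi_{\nu,\alpha}$ on compacta. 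On such a neighbourhood, $\Psi$ and its first two derivatives converge locally uniformly. Now I would use the fact that, under Assumptions \ref{ConsEstAssum} (so no Brownian part and finite fourth moment), the second derivative has the clean integral form
$$\frac{\partial^2}{\partial u_k^2}\Psi_{\nu,\alpha}(u) = -\int_{\mathds{R}^d} x_k^2\, e^{i\langle u,x\rangle}\,\nu(dx),$$
which exhibits $-\partial^2_{u_k}\Psi_{\nu,\alpha}$ as the Fourier transform of the finite Borel measure $x_k^2\,\nu(dx)$. Summing over $k$ gives $-\sum_k \partial^2_{u_k}\Psi_{\nu,\alpha}(u)=\int e^{i\langle u,x\rangle}\,|x|^2\nu(dx)=\mathcal{F}\big(|x|^2\nu\big)(u)$, and likewise for $\widehat{\nu}_n$.

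Putting these together: the pointwise convergence $\mathcal{F}(|x|^2\widehat{\nu}_n)(u)\to\mathcal{F}(|x|^2\nu)(u)$ holds for $u$ in a neighbourhood of $0$, and in particular the total masses $|x|^2\widehat{\nu}_n(\mathds{R}^d)=\mathcal{F}(|x|^2\widehat{\nu}_n)(0)\to\mathcal{F}(|x|^2\nu)(0)=|x|^2\nu(\mathds{R}^d)$ converge, so the family $(|x|^2\widehat{\nu}_n)_n$ is bounded in total variation. Then I would invoke L\'evy's continuity theorem for finite (not necessarily normalized) measures: pointwise convergence of Fourier transforms on a neighbourhood of the origin, together with convergence of the total masses, yields weak convergence $|x|^2\widehat{\nu}_n(dx)\xrightarrow{w}|x|^2\nu(dx)$. (If one prefers to avoid the ``neighbourhood of $0$'' subtlety, the $d^{(2)}$-convergence can be upgraded to convergence of $\mathcal{F}(|x|^2\widehat{\nu}_n)$ on all of $\mathds{R}^d$ by covering $\mathds{R}^d$ with finitely many translated neighbourhoods and propagating the logarithm, but convergence near $0$ plus tightness is enough.)

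The main obstacle I anticipate is the passage from $\varphi$ to $\Psi$ in a way that is uniform in $n$: one must guarantee that the distinguished logarithms $\Psi_{\widehat{\nu}_n,\widehat{\alpha}_n}$ are simultaneously well defined on a \emph{fixed} neighbourhood of the origin and that the $L^\infty(w)$-control on the derivatives of $\varphi$ transfers to local-uniform control on the derivatives of $\Psi$. Since $w$ is only assumed bounded and vanishing at infinity, $\|\cdot\|_{L^\infty(w)}$-smallness is genuinely a uniform-on-$\mathds{R}^d$ statement only up to the weight; near the origin $w$ is bounded below on compacta, so $\|\cdot\|_{L^\infty(w)}$-convergence does give ordinary uniform convergence on compact neighbourhoods of $0$, which is exactly what is needed — but this point should be made explicitly. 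Once that is in place, the chain rule converting derivatives of $\Psi=\log\varphi$ into derivatives of $\varphi$ (and back), and the identification of $-\sum_k\partial^2_{u_k}\Psi$ as $\mathcal{F}(|x|^2\nu)$, is routine, and the conclusion follows from L\'evy's theorem.
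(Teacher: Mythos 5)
Your route is genuinely different from the paper's. Both arguments share the opening: from Proposition~\ref{weightpointprop} and the minimum distance property~\eqref{mindisteq} you get $d^{(2)}(\varphi_{\widehat{\nu}_n,\widehat{\alpha}_n},\varphi_{\nu,\alpha})\to 0$ almost surely, hence locally uniform convergence of $\varphi_n$ and its first two partials. The paper then uses this in the most modest way possible: it deduces only pointwise convergence $\varphi_{\widehat\nu_n,\widehat\alpha_n}(v)\to\varphi_{\nu,\alpha}(v)$ for each $v$, invokes L\'evy--Cram\'er to get weak convergence of the one-time marginals $P^{X_1}_{\widehat\nu_n,\widehat\alpha_n}\to P^{X_1}_{\nu,\alpha}$, reads off convergence of the total mass $\int|x|^2\,\widehat\nu_n(dx)\to\int|x|^2\,\nu(dx)$ from the derivatives at $0$, and then combines Sato's Theorem~8.7 (convergence of infinitely divisible laws $\Leftrightarrow$ convergence of triplets away from the origin) with a cut-off function $g_\epsilon$ near $0$ to settle the vague convergence. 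You instead observe that $-\sum_k\partial^2_{u_k}\Psi_{\nu,\alpha}(u)=\mathcal{F}(|x|^2\nu)(u)$ for \emph{all} $u$ (the paper uses only the value at $u=0$), push the locally uniform $d^{(2)}$-convergence through the distinguished logarithm to get $\mathcal{F}(|x|^2\widehat\nu_n)\to\mathcal{F}(|x|^2\nu)$, and finish with L\'evy's continuity theorem for finite measures. Your route skips the Sato~8.7 step and the small-jump cut-off argument entirely, at the cost of having to justify that the distinguished logarithms $\Psi_n$ are globally well defined and converge together with their derivatives — which is fine here because each $\varphi_{\widehat\nu_n,\widehat\alpha_n}$ is an infinitely divisible characteristic function and hence non-vanishing on all of $\mathbb{R}^d$, so you needn't restrict to a neighbourhood of the origin at all.

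One step is stated incorrectly: \emph{"pointwise convergence of Fourier transforms on a neighbourhood of the origin, together with convergence of the total masses, yields weak convergence"} and the later \emph{"convergence near $0$ plus tightness is enough"} are both false. Tightness (which indeed follows from control of $\Re\mathcal{F}\mu_n$ near $0$) only guarantees that every subsequence has a weakly convergent sub-subsequence; identifying the subsequential limits with $|x|^2\nu$ requires knowing the Fourier transforms on \emph{all} of $\mathbb{R}^d$, since two distinct finite measures can have identical Fourier transforms on a neighbourhood of the origin (classical counterexamples built from the triangular characteristic function). Fortunately, your own remark already contains the fix: the $d^{(2)}$-convergence gives $\mathcal{F}(|x|^2\widehat\nu_n)(u)\to\mathcal{F}(|x|^2\nu)(u)$ for every $u$, not merely near $0$, so the standard continuity theorem applies directly (normalize by the total masses — which converge to $\int|x|^2\,\nu(dx)$ — and reduce to the probability-measure case, treating $\nu=0$ trivially). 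If you replace the "neighbourhood of $0$ plus tightness" justification with this global statement, the argument is complete.
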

\begin{proof}
Proposition \ref{weightpointprop} yields
\begin{eqnarray}
d^{(2)}(\varphi_{\widehat{\nu}_n,\widehat{\alpha}_n},\varphi_{\nu,\alpha}) &\leq& d^{(2)}(\varphi_{\widehat{\nu}_n,\widehat{\alpha}_n},\widehat{\varphi}_n) + d^{(2)}(\widehat{\varphi}_n,\varphi_{\nu,\alpha}) \nonumber \\ &\leq&
2d^{(2)}(\widehat{\varphi}_n,\varphi_{\nu,\alpha}) + \delta_n \nonumber\\ &\to& 0,\quad P_{\nu,\alpha}\text{-a.s.} \label{triangleeq}
\end{eqnarray}
It follows for any compact $K\subseteq \mathds{R}^d$
\begin{equation} \label{wconveq1}
\int_K |\varphi_{\widehat{\nu}_n,\widehat{\alpha}_n}(u) - \varphi_{\nu,\alpha}(u)|\,\lambda^d(du) \leq \left(\inf_{u\in K} w(u) \right)^{-1} \lambda^d(K) \|\varphi_{\widehat{\nu}_n,\widehat{\alpha}_n} - \varphi_{\nu,\alpha} \|_{L^\infty(w)},\quad \omega\in\Omega.
\end{equation}
Note that
$$\|\varphi_{\widehat{\nu}_n,\widehat{\alpha}_n} - \varphi_{\nu,\alpha}\|_{L^\infty(w)} \leq d^{(2)}(\varphi_{\widehat{\nu}_n,\widehat{\alpha}_n},\varphi_{\nu,\alpha}) \to 0,\quad P_{\nu,\alpha}\text{-a.s.}$$
Fix any $v=(v_1,\ldots,v_d)\in\mathds{R}^d$ and define
$$K_n=K_n(\omega)\gl [v_1,v_1+\epsilon_n] \times \ldots \times [v_d,v_d+\epsilon_n] \subseteq \mathds{R}^d,\quad n\in\mathds{N}$$
for a sequence $(\epsilon_n)_n$ with $\epsilon_n\downarrow 0$, which may depend on $\omega\in\Omega$. Then we have because of the strict positivity and continuity of $w$
$$C\gl \sup_{n\in\mathds{N}}\left(\inf_{u\in K_n} w(u)\right)^{-1} = \left(\inf_{u\in K_1} w(u)\right)^{-1} < \infty.$$
From (\ref{wconveq1}), it follows that
\begin{equation} \label{wconveq2}
\frac{1}{\epsilon_n^d} \int_{K_n} |\varphi_{\widehat{\nu}_n,\widehat{\alpha}_n}(u)-\varphi_{\nu,\alpha}(u)|\,\lambda^d(du) \leq C\|\varphi_{\widehat{\nu}_n,\widehat{\alpha}_n}-\varphi_{\nu,\alpha}\|_{L^\infty(w)} \to 0,\quad P_{\nu,\alpha}\text{-a.s.}
\end{equation}
As $u\mapsto |\varphi_{\widehat{\nu}_n,\widehat{\alpha}_n}(u)-\varphi_{\nu,\alpha}(u)|$ is continuous, we can choose $\epsilon_n = \epsilon_n(\omega)$ small enough that
$$\left|\frac{1}{\epsilon_n^d} \int_{K_n} |\varphi_{\widehat{\nu}_n,\widehat{\alpha}_n}(u)-\varphi_{\nu,\alpha}(u)|\,\lambda^d(du)
- |\varphi_{\widehat{\nu}_n,\widehat{\alpha}_n}(v)-\varphi_{\nu,\alpha}(v)| \right| \leq \frac{1}{n},\quad \omega\in\Omega.$$
This yields together with (\ref{wconveq2})
$$\varphi_{\widehat{\nu}_n,\widehat{\alpha}_n}(v) \to \varphi_{\nu,\alpha}(v)\quad P_{\nu,\alpha}\text{-a.s.}$$
and the negligible set is independent of $v\in\mathds{R}^d$. As a result, L\'{e}vy-Cram\'{e}r yields that we have $P_{\nu,\alpha}$-a.s. 
the weak convergence
$$P_{\widehat{\nu}_n,\widehat{\alpha}_n}^{X_1}\,\, {\buildrel w \over \to}\,\, P_{\nu,\alpha}^{X_1}.$$
With (\ref{charrepeq}), we have
\begin{eqnarray*}
\int_{\mathds{R}^d} |x|^2\,\widehat{\nu}_n(dx) &=& \sum_{k=1}^d \left(\frac{\partial \varphi_{\widehat{\nu}_n,\widehat{\alpha}_n}}{\partial u_k} (0)\right)^2 - \frac{\partial^2 \varphi_{\widehat{\nu}_n,\widehat{\alpha}_n}}{\partial u_k^2} (0) \\&\to&
\sum_{k=1}^d \left(\frac{\partial \varphi_{\nu,\alpha}}{\partial u_k} (0)\right)^2 - \frac{\partial^2 \varphi_{\nu,\alpha}}{\partial u_k^2} (0) = \int_{\mathds{R}^d} |x|^2\, \nu(dx),\quad P_{\nu,\alpha}\text{-a.s.}
\end{eqnarray*}
because of (\ref{triangleeq}).
It therefore suffices to prove the vague convergence, cf. Chung \cite{Chu}.
$$|x|^2 \widehat{\nu}_n(dx)\, {\buildrel v \over \to}\, |x|^2\nu(dx),\quad P_{\nu,\alpha}\text{-a.s.}$$
Let $f\,:\,\mathds{R}^d\to \mathds{R}$ be a continuous function with compact support. It remains to verify
\begin{equation} \label{wconveq3}
\int_{\mathds{R}^d} f(x)|x|^2\widehat{\nu}_n(dx) \to \int_{\mathds{R}^d} f(x)|x|^2\, \nu(dx),\quad P_{\nu,\alpha}\text{-a.s.}
\end{equation}
For this purpose set
\begin{equation*}
h_\epsilon(r) \gl
\begin{cases}
0, &0\leq r < \frac{\epsilon}{2} \\
\frac{2}{\epsilon}\left(r-\frac{\epsilon}{2}\right), &\frac{\epsilon}{2}\leq r < \epsilon \\
1, &r\geq\epsilon
\end{cases},\quad r\geq 0
\end{equation*}
and 
$$g_\epsilon(x)\gl h_\epsilon(|x|),\quad x\in\mathds{R}^d.$$
Fix any $\delta>0$ and choose $\epsilon=\epsilon(\omega)>0$ small enough that
$$\|f\|_\infty \sup_{n\in\mathds{N}} \int_{\{|x|\leq\epsilon\}} |x|^2\, \widehat{\nu}_n(dx) < \delta,\quad \|f\|_\infty \int_{\{|x|\leq \epsilon\}} |x|^2\, \nu(dx) < \delta, \quad P_{\nu,\alpha}\text{-a.s.}$$
This is possible because of Sato \cite{Sat} Theorem 8.7.(2) and our assumption $\Sigma = \widehat{\Sigma}_n = 0$ on $\Omega$. Based on this, we obtain
\begin{equation} \label{wconveq4}
\int_{\mathds{R}^d}f(x)(1-g_{\epsilon}(x))|x|^2\widehat{\nu}_n(dx) < \delta,\quad \int_{\mathds{R}^d}f(x)(1-g_\epsilon(x))|x|^2\,\nu(dx) < \delta,\quad P_{\nu,\alpha}\text{-a.s.}
\end{equation}
On the other hand, $x\mapsto f(x)|x|^2g_\epsilon(x),\,x\in\mathds{R}^d$ is a bounded, continuous function which vanishes on a neighborhood of zero. Thus, Sato \cite{Sat}[Theorem 8.7.(1)] yields
$$\int_{\mathds{R}^d} f(x) g_\epsilon(x)|x|^2 \widehat{\nu}_n(dx) \to \int_{\mathds{R}^d} f(x)g_\epsilon(x) |x|^2 \,\nu(dx),\quad P_{\nu,\alpha}\text{-a.s.}$$
Hence, together with (\ref{wconveq4}), we obtain
$$\limsup_n\left|\int_{\mathds{R}^d} f(x)|x|^2\widehat{\nu}_n(dx) - \int_{\mathds{R}^d} f(x)|x|^2\, \nu(dx) \right| \leq 2\delta,\quad P_{\nu,\alpha}\text{-a.s.}$$
This proves (\ref{wconveq3}) because $\delta>0$ was chosen arbitrarily.
\end{proof}

Set $Z_t = X_t - X_{t-1},\,t\in\mathds{N}$ and define
$$A_n(u) \gl n^{-\frac{1}{2}} \sum_{t=1}^n \left(e^{i\left<u,Z_t\right>} - E\left(e^{i\left<u,Z_1\right>}\right)\right),\quad u\in\mathds{R}^d,\quad n\in\mathds{N}.$$
Next, we aim to establish a similar result as in Proposition \ref{weightpointprop} which will be useful for the next Section \ref{LevCopSec}. The difference to the previous result is that we are not interested in a $P_{\Sigma,\nu,\alpha}$-a.s. result, but in finding an upper bound for the expectation values as stated in the next theorem. Furthermore, $A_n(u)$ is scaled with $n^{-\frac{1}{2}}$ and not with $n^{-1}$ as in Proposition \ref{weightpointprop}. Therefore it is not surprising that we have to make some further restrictions to the weight function $w$. To be more precise, we choose $w$ as
$$w(u) = (\log(e+|u|))^{-\frac{1}{2}-\delta},\quad u\in\mathds{R}^d$$
for some fixed $\delta>0$. This is the natural generalization to $d$ dimensions of the weight function in \cite{NeuRei}. The proof of the following theorem is postponed to the appendix. 

\begin{theorem} \label{Eboundth}
Let $(\Sigma,\nu,\alpha)$ be a L\'{e}vy triplet, such that $E|X_1|^{8+\gamma}<\infty$ holds for some $\gamma>0$. Then we have
$$\sup_{n\geq 1} E_{\Sigma,\nu,\alpha} \left\|\frac{\partial^l}{\partial u_k^l} A_n(u)\right\|_{L^\infty(w)} < \infty$$
for all $0\leq l \leq 4$ and $1\leq k\leq d$.
\end{theorem}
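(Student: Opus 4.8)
The plan is to control each of the five suprema $\bigl\|\tfrac{\partial^l}{\partial u_k^l}A_n(u)\bigr\|_{L^\infty(w)}$, $0\le l\le 4$, by the same two-step scheme indicated in the sketch of Proposition \ref{weightpointprop}: first reduce the supremum over $\mathds{R}^d$ to a supremum over a countable dense set by a modulus-of-continuity argument, then handle the countable set via a chaining/union bound in which the weight $w(u)=(\log(e+|u|))^{-1/2-\delta}$ supplies the summability. Concretely, writing $Z_t=X_t-X_{t-1}$ and $f_u^{(l,k)}(z)\gl (iz_k)^l e^{i\langle u,z\rangle}$, we have $\tfrac{\partial^l}{\partial u_k^l}A_n(u)=n^{-1/2}\sum_{t=1}^n\bigl(f_u^{(l,k)}(Z_t)-Ef_u^{(l,k)}(Z_1)\bigr)$, a centered i.i.d.\ sum. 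Since $E|X_1|^{8+\gamma}<\infty$ and the $Z_t$ are i.i.d.\ copies of $X_1$, the random variables $|Z_{t,k}|^l$ have moments of order $(8+\gamma)/l\ge 2+\gamma/4$ for $l\le 4$, so for fixed $u$ the summands have uniformly bounded variance and even a bounded $(2+\gamma')$-moment, giving $E|\tfrac{\partial^l}{\partial u_k^l}A_n(u)|^{2}\lesssim 1$ and, more usefully, $E|\tfrac{\partial^l}{\partial u_k^l}A_n(u)|^{2+\gamma'}\lesssim 1$ uniformly in $n$ and $u$ by Rosenthal's (or Marcinkiewicz--Zygmund's) inequality.

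Next I would set up the discretization. The key estimate is a Lipschitz-type bound: for $u,u'$ in a ball of radius $R$, $|f_u^{(l,k)}(z)-f_{u'}^{(l,k)}(z)|\le |z_k|^l\,|z|\,|u-u'|$, hence the increment of $\tfrac{\partial^l}{\partial u_k^l}A_n$ between $u$ and $u'$ is bounded, in $L^{2+\gamma'}$ uniformly in $n$, by $C|u-u'|$ with a constant independent of $n$ (using $E|X_1|^{8+\gamma}<\infty$ once more, now with the extra factor $|z|$, which is still covered since $l+1\le 5<8+\gamma$). Combined with the moment bound at grid points and a standard Kolmogorov/Pisier-type continuity-and-maximal argument on dyadic cubes, one gets, for each dyadic annulus $\{2^{j}\le |u|<2^{j+1}\}$,
\begin{equation*}
E\sup_{2^{j}\le|u|<2^{j+1}}\Bigl|\tfrac{\partial^l}{\partial u_k^l}A_n(u)\Bigr|\ \lesssim\ 1+\log(2+2^{j})\ \lesssim\ j+1,
\end{equation*}
uniformly in $n$; the logarithmic growth in the radius is exactly the price of taking a supremum over a region whose metric entropy grows polynomially in $R$ while the tail moment is only of finite order. (On the central ball $|u|\le 1$ the same argument gives an $O(1)$ bound.) Multiplying by the weight, $\sup_{2^j\le|u|<2^{j+1}}w(u)\asymp (j+1)^{-1/2-\delta}$, so
\begin{equation*}
E\Bigl\|\tfrac{\partial^l}{\partial u_k^l}A_n\Bigr\|_{L^\infty(w)}\ \lesssim\ \sum_{j\ge 0}(j+1)^{-1/2-\delta}\,(j+1)\ <\ \infty
\end{equation*}
since $\delta>1/2$ makes the exponent $-1/2-\delta+1=1/2-\delta<0$ with $\sum (j+1)^{1/2-\delta}<\infty$; taking the supremum over $n$ is harmless because every bound above was uniform in $n$.

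The main obstacle is getting the growth in $R=2^{j+1}$ to be only logarithmic rather than polynomial: a naive union bound over an $\varepsilon$-net of the annulus, which contains $\asymp (R/\varepsilon)^d$ points, combined with only a $(2+\gamma')$-th moment tail $P(|\cdot|\ge x)\lesssim x^{-(2+\gamma')}$, would cost a power of $R$ and destroy the summability. The remedy is to exploit that we control a genuine $L^{p}$-norm of the \emph{increments} (not just a tail at a point): by the Garsia--Rodemich--Rumsey / Kolmogorov continuity lemma applied cube by cube, the expected supremum over a unit cube is bounded by a constant depending only on $p=2+\gamma'$, $d$ and the Hölder constant, uniformly over which cube and over $n$; summing these $O(R^d)$ cube-bounds gives the harmless $R^d$, but one then inserts this into the weight sum in the form "$O(R^d)$ cubes each contributing $O(1)$, weighted by $w$", and it is here that one must be slightly careful to organize the sum so that the polynomial factor $R^d$ is absorbed — this is done by noting the stronger pointwise tail $P(\sup_{\text{cube}}|\cdot|\ge x)\lesssim x^{-p}$ uniformly over cubes, so that a single union bound over all $\asymp R^d$ cubes in the annulus yields $E\sup_{|u|\le R}|\cdot|\lesssim (R^d)^{1/p}\asymp R^{d/p}$, and choosing $\gamma$ (hence $p$) large enough — which is exactly what the hypothesis $E|X_1|^{8+\gamma}<\infty$ with $8+\gamma>2d\cdot(\text{something})$ buys in dimension up to $d=4$, noting that in this paper the relevant case is $d=2$ where $p$ close to $2$ already gives $R^{d/p}$ dominated after weighting by $w$ with $\delta>1/2$; alternatively one iterates dyadically in $R$ and uses that $\sum_j 2^{jd/p}(j+1)^{-1/2-\delta}$ still converges once $p$ is taken large, which the moment assumption permits. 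Either way, all constants are $n$-free, so the final $\sup_{n\ge1}$ is immediate.
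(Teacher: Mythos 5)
There is a genuine gap in your proposal, concentrated exactly at the point you yourself flag as ``the main obstacle.'' You claim that a Kolmogorov/GRR argument on unit cubes, summed over the annulus $\{2^j\le|u|<2^{j+1}\}$, yields $E\sup_{2^j\le|u|<2^{j+1}}\bigl|\tfrac{\partial^l}{\partial u_k^l}A_n(u)\bigr|\lesssim j+1$. But the union bound over the $\asymp 2^{jd}$ unit cubes in the annulus, with only an $L^{2+\gamma'}$ control per cube, gives $E\sup_{\text{annulus}}\lesssim (2^{jd})^{1/p}=2^{jd/p}$, which is \emph{polynomial} in the radius, not logarithmic. Multiplying by $w\asymp(j+1)^{-1/2-\delta}$ therefore produces $\sum_j 2^{jd/p}(j+1)^{-1/2-\delta}$, which diverges for every finite $p$. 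Your proposed repair — ``choose $p$ (hence $\gamma$) large'' — is not available: the hypothesis is that $E|X_1|^{8+\gamma}<\infty$ for \emph{some} fixed $\gamma>0$, which may be arbitrarily small; you cannot increase it. In addition, even if the $j+1$ bound were correct, your final summation requires $\delta>1/2$, whereas the theorem is stated (and needed in the rest of the paper) for every $\delta>0$.

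The paper resolves the difficulty in a structurally different way: the weight $w$ is built into the function class $G_{l,k}=\{z\mapsto w(u)\tfrac{\partial^l}{\partial u_k^l}\cos\langle u,z\rangle : u\in\mathds{R}^d\}$, not applied after the fact to a supremum over annuli. Because $w(u)\le\epsilon/2$ once $|u|\ge U(\epsilon)$, a \emph{single} bracket (together with a cutoff at $|z|\le M(\epsilon)$ handled by the envelope) covers the entire unbounded region $\{|u|\ge U(\epsilon)\}$; only the compact region $\{|u|\le U(\epsilon)\}$ needs a grid, and there the Lipschitz modulus gives $J(\epsilon)\lesssim U(\epsilon)(L+M(\epsilon))/\epsilon$. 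The bracketing number thus satisfies $\log N_{[]}(\epsilon)\lesssim \log U(\epsilon)+\log(1/\epsilon)\lesssim\epsilon^{-(\delta+1/2)^{-1}}$, and since $(\delta+1/2)^{-1}<2$, the square root of this is integrable at $0$, so Van der Vaart's Corollary 19.35 closes the argument for every $\delta>0$. The decisive idea you are missing is that one should pay the metric-entropy price only on a ball whose radius is \emph{exponentially} large in $\epsilon^{-1}$ (so its \emph{log} is only polynomial in $\epsilon^{-1}$), and dispose of the complement with one bracket using the smallness of $w$ — rather than summing cube-by-cube contributions over all of $\mathds{R}^d$ and hoping the weight saves you.
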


Further, define a $\mathcal{C}^4(\mathds{R}^d)$ metric
$$d(\varphi_1,\varphi_2)\gl d^{(4)}(\varphi_1,\varphi_2)\gl + \sum_{j=1}^4\sum_{k=1}^d\left\|\frac{\partial^j}{\partial u_k^j} (\varphi_1-\varphi_2)\right\|_{L^\infty(w)}.$$
Then, as a direct consequence of Theorem \ref{Eboundth}, it holds the following statement:

\begin{cor} \label{Eboundcor}
Let $(\Sigma,\nu,\alpha)$ be a L\'{e}vy triplet with $E|X_1|^{8+\gamma}<\infty$ for some $\gamma>0$. Then we have
$$E_{\Sigma,\nu,\alpha} d(\widehat{\varphi}_n,\varphi_{\Sigma,\nu,\alpha}) = O\left(n^{-\frac{1}{2}}\right),\quad n\to\infty.$$
\end{cor}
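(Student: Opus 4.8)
The plan is to derive Corollary \ref{Eboundcor} directly from Theorem \ref{Eboundth} by unwinding the definition of the metric $d = d^{(4)}$ and observing that $\widehat{\varphi}_n - \varphi_{\Sigma,\nu,\alpha}$ is, up to the scaling factor $n^{-1/2}$, exactly the process $A_n$. Indeed, by definition $\widehat{\varphi}_n(u) = \frac{1}{n}\sum_{t=1}^n e^{i\langle u, Z_t\rangle}$ with $Z_t = X_t - X_{t-1}$, and since the increments of a L\'evy process are i.i.d.\ with $E(e^{i\langle u, Z_1\rangle}) = \varphi_{\Sigma,\nu,\alpha}(u)$, we get $\widehat{\varphi}_n(u) - \varphi_{\Sigma,\nu,\alpha}(u) = n^{-1/2} A_n(u)$.

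Next I would differentiate this identity. Differentiation under the sum is legitimate because $E|X_1|^{8+\gamma} < \infty$ guarantees that the relevant moments exist (so the dominated convergence / differentiation-under-the-integral argument for $\varphi_{\Sigma,\nu,\alpha}$ goes through, and for $\widehat{\varphi}_n$ the sum is finite), hence for each $1 \le k \le d$ and each $0 \le j \le 4$ we have $\frac{\partial^j}{\partial u_k^j}(\widehat{\varphi}_n - \varphi_{\Sigma,\nu,\alpha})(u) = n^{-1/2} \frac{\partial^j}{\partial u_k^j} A_n(u)$. Taking the weighted supremum norm $\|\cdot\|_{L^\infty(w)}$ of both sides and summing over $1 \le j \le 4$ and $1 \le k \le d$ as in the definition of $d^{(4)}$, we obtain
$$d(\widehat{\varphi}_n,\varphi_{\Sigma,\nu,\alpha}) = n^{-\frac{1}{2}} \sum_{j=1}^4 \sum_{k=1}^d \left\|\frac{\partial^j}{\partial u_k^j} A_n(u)\right\|_{L^\infty(w)}.$$
Taking expectations and applying Theorem \ref{Eboundth} term by term (there are only finitely many summands, $4d$ of them), each expectation $E_{\Sigma,\nu,\alpha}\|\frac{\partial^j}{\partial u_k^j} A_n\|_{L^\infty(w)}$ is bounded by a constant uniformly in $n$, so $E_{\Sigma,\nu,\alpha} d(\widehat{\varphi}_n,\varphi_{\Sigma,\nu,\alpha}) \le C n^{-1/2}$ for a finite constant $C$, which is the claim.

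There is essentially no serious obstacle here: the statement is flagged in the text as "a direct consequence of Theorem \ref{Eboundth}," and the only things to check are the bookkeeping identity $\widehat{\varphi}_n - \varphi_{\Sigma,\nu,\alpha} = n^{-1/2} A_n$ and the justification of differentiating under the expectation/sum. The latter is the one point deserving a sentence of care — one needs the $(8+\gamma)$-th moment not because four derivatives require more than four moments per se, but to be consistent with the hypothesis of Theorem \ref{Eboundth}, which is the real engine and already subsumes whatever integrability is needed. In the write-up I would simply state the identity, note that the moment assumption licenses termwise differentiation, and invoke Theorem \ref{Eboundth} on each of the finitely many terms.
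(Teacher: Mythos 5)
Your proof is correct and follows the route the paper intends: the Corollary is announced in the text as a ``direct consequence'' of Theorem \ref{Eboundth} with no argument given, and your unwinding of the identity $\widehat{\varphi}_n - \varphi_{\Sigma,\nu,\alpha} = n^{-1/2}A_n$, termwise differentiation, and application of Theorem \ref{Eboundth} to each of the finitely many summands in $d = d^{(4)}$ is exactly the routine verification being elided. (One cosmetic note: the paper's displayed definition of $d^{(4)}$ appears to have a typo that drops the $j=0$ term; since you already observe the identity for $0\le j\le 4$ and Theorem \ref{Eboundth} covers $l=0$ as well, this does not affect your argument either way.)
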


\section{Nonparametric low frequency L\'{e}vy copula estimation} \label{LevCopSec}
We denote with $\mathcal{F}$ the Fourier transform of a function or a finite measure. To be more precise, we set for $u\in\mathds{R}^d$
$$(\mathcal{F}f)(u)\gl \int_{\mathds{R}^d} e^{i\left<u,x\right>}f(x)\,\lambda^d(dx),\quad f\in L^1(\lambda^d)$$
and
$$(\mathcal{F}\mu)(u)\gl \int_{\mathds{R}^d} e^{i\left<u,x\right>}\,\mu(dx)$$
where $\mu$ denotes a finite positive measure on the space $(\mathds{R}^d,\mathcal{B}(\mathds{R}^d))$.

As described in the introductory section, we aim to estimate the L\'{e}vy measure $\nu$ in order to construct a L\'{e}vy copula estimator. Motivated by Nickl and Rei{\ss} \cite{NicRei}, we do not estimate directly $\nu$, but a smoothed version of $\nu$. The statistical estimation of this smoothed version is investigated in the proof of Theorem \ref{LevyEstTh}. An upper bound of the error which we make by using a smoothed version of $\nu$ instead of $\nu$ itself, is calculated in Lemma \ref{convErrorLem}.

\subsection{The Assumptions}
We consider the convolution of $\nu$ with a Kernel $K$ in order to get such a smoothed version of $\nu$, cf. Lemma \ref{convErrorLem}. Such a Kernel, of course, has  to fulfill some assumptions which are stated next: 

\begin{assum} \label{kernelAssum} \rm
Let $K\,:\,\mathds{R}^2\to\mathds{R}_+$ be a kernel function with the properties
\begin{itemize}
\item[(i)]
$K\in L^1(\mathds{R}^2)\cap L^2(\mathds{R}^2),\quad \int_{\mathds{R^2}} K(x)\,\lambda^2(dx) = 1$
\item[(ii)]
$\text{supp}(\mathcal{F}K)\subseteq [-1,1]^2$
\item[(iii)]
$u\,\mapsto\,(\mathcal{F}K)(u)\text{ is Lipschitz continuous.}$
\end{itemize}
\end{assum}

It is natural to consider L\'{e}vy processes in the Fourier space because of the L\'{e}vy-Khintchine formula. From this point of view, Assumption \ref{kernelAssum} (ii) is particularly useful because it provides compact support for many important integrands we use.
\begin{exa} \rm
In the following example, we state a kernel function $K$ which fulfills the Assumptions \ref{kernelAssum}. Set for this purpose
\begin{equation*}
\begin{array}{llcl}
K_1 \,:\,&\mathds{R} &\to& \mathds{R}_+ \\
&x_1 &\mapsto& 
\begin{cases}
\frac{2}{\pi}\left(\frac{\sin\left(\frac{x_1}{2}\right)}{x_1}\right)^2, & x_1 \neq 0 \\
\frac{1}{2\pi}, & x_1=0
\end{cases}
\end{array}
\end{equation*}
and
$$K(x)\gl K_1(x_1)\cdot K_1(x_2),\quad x=(x_1,x_2)\in\mathds{R}^2.$$
Since $K_1$ is continuous and $K_1(x_1) = O(x_1^{-2})$, $|x_1|\to\infty$,
it follows $K_1\in L^1(\mathds{R})\cap L^2(\mathds{R})$ and therefore $K\in L^1(\mathds{R}^2)\cap L^2(\mathds{R}^2)$.
A straightforward calculation yields
$$(\mathcal{F} K_1)(u_1) = (1-|u_1|)\one_{(-1,1)}(u_1),\quad u_1\in\mathds{R}.$$
This implies
$$(\mathcal{F}K)(u) = (\mathcal{F} K_1)(u_1)\cdot(\mathcal{F}K_1)(u_2),\quad u=(u_1,u_2)\in\mathds{R}^2.$$
Note that $K$ fulfills the desired conditions since
$$\int_{\mathds{R}^2} K(x) \lambda^2(x) = (\mathcal{F}K)(0) = (\mathcal{F}K_1)(0)\cdot(\mathcal{F}K_1)(0) = 1$$
and that we have for $u,v\in\mathds{R}^2$
\begin{eqnarray*}
|\mathcal{F}K(u)-\mathcal{F}K(v)| &\leq& |\mathcal{F}K_1(u_1)(\mathcal{F}K_1(u_2)-\mathcal{F}K_1(v_2))| + |\mathcal{F}K_1(v_2)(\mathcal{F} K_1(u_1) - \mathcal{F} K_1(v_1))| \\&\leq&
|\mathcal{F}K_1(u_2) - \mathcal{F}K_1(v_2)| + |\mathcal{F}K_1(u_1) - \mathcal{F}K_1(v_1)| \\ &\leq&
|u_2-v_2| + |u_1-v_1| \\&\leq&
\sqrt{2} |u-v|.
\end{eqnarray*}
Thus, the Assumptions \ref{kernelAssum} are fulfilled.
\end{exa}

Next, set for $h>0$
$$K_h(x)\gl h^{-2} K(h^{-1}x) = (h^{-1}K_1(h^{-1}x_1))\cdot(h^{-1}K_1(h^{-1}x_2)),\quad x\in\mathds{R}^2$$
and observe that standard results from Fourier analysis yield
$$(\mathcal{F}K_h)(u) = (\mathcal{F}K)(hu),\quad u\in\mathds{R}^2.$$
Recall the notations
$$U(x,y) = \nu([x,\infty)\times [y,\infty)),\quad U_1(x) = \nu([x,\infty)\times\mathds{R}_+),\quad U_2(y) = \nu(\mathds{R}_+\times [y,\infty)),\quad x,y\in\mathds{R}_+$$
and 
$$\mathfrak{R} = \mathds{R}_+^2\backslash \{(0,0)\},\quad g_{a,b}(x)\gl \frac{1}{x_1^4+x_2^4}\one_{[a,\infty)\times [b,\infty)}(x_1,x_2),\quad (a,b)\in\mathfrak{R},\quad x\in\mathds{R}^2.$$

\begin{assum} \label{LevyAssum}
Next we state some assumptions concerning the L\'{e}vy measure $\nu$:
\begin{itemize}
\item[(i)]
$\nu\left(\mathds{R}^2\backslash[0,\infty)^2\right) = 0, \quad \text{i.e. only positive jumps},$
\item[(ii)]
$\exists \gamma> 0\,:\,\int |x|^{8+\gamma}\,\nu(dx) < \infty, \quad \text{i.e. finite 8+$\gamma$-th moment},$
\item[(iii)]
$\mathcal{F}((x_1^4+x_2^4)\nu)(u) \lesssim (1+|u_1|)^{-1}(1+|u_2|)^{-1},\quad u\in\mathds{R}^2,$
\item[(iv)]
$U_k\,:\, (0,\infty) \to (0,\infty)$ is a $\mathcal{C}^1$-bijection with $U_k'<0$ and
\begin{equation} \label{UDerCond}
\inf_{0<x_k\leq 1}|U_k'(x_k)| > 0,\quad \sup_{x_k>0} (1\wedge x_k^3)|U_k'(x_k)|<\infty,\quad k=1,2.
\end{equation}
\end{itemize}
\end{assum}

\begin{rem} \rm
Assumption \ref{LevyAssum} (i) assures that there are no negative jumps. This simplifies the shape of the L\'{e}vy copula of $\nu$, cf. (\ref{introLevCopEq}) and serves to keep the technical overhead as small as possible. (ii) is required since we aim to use the statement of Corollary \ref{Eboundcor}. (iii) is perhaps the most non-transparent assumption. It guarantees a certain decay behavior of some integrands in the Fourier space. Finally, (iv) is needed for the construction of the pseudo inverse in order to estimate the L\'{e}vy copula of $\nu$, which is our final goal.

The next proposition and corollary state that these assumptions are not very restrictive in our context, compare the discussion in the introductory section 1.2.
\end{rem}

\begin{prop} \label{BGProp}
Let $f\,:\,\mathds{R}_+^2\to\mathds{R}_+$ be a continuous function with the properties
\begin{itemize}
\item[(i)]
$f(x)>0,\quad x\in(\{0\}\times\mathds{R}_+^\ast)\times(\mathds{R}_+^\ast\times\{0\})$,
\item[(ii)]
$|x|^2\lesssim f(x) \lesssim (\log|x|)^{-2},\quad x\in\mathds{R}_+^2\,:\,|x|\leq\frac{1}{2}$,
\item[(iii)]
$f(x)\lesssim (1+|x|)^{-(6+\epsilon)},\quad x\in \mathds{R}_+^2$
\end{itemize}
for some $\epsilon>0$. Then, 
$$\nu(dx)\gl \one_{\mathfrak{R}}(x)(x_1^4+x_2^4)^{-1}f(x) \lambda^2(dx)$$
is a L\'{e}vy measure and fulfills the Assumptions \ref{LevyAssum} (i), (ii) and (iv).
\end{prop}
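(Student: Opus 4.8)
The plan is to verify the three listed properties of $\nu$ in turn, using the defining formula $\nu(dx) = \one_{\mathfrak{R}}(x)(x_1^4+x_2^4)^{-1}f(x)\,\lambda^2(dx)$ together with the growth conditions (i)--(iii) on $f$.

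\textbf{Step 1: $\nu$ is a L\'{e}vy measure, and Assumption \ref{LevyAssum} (ii).} I would split the integral $\int |x|^{2+k}\,\nu(dx)$ over the two regions $\{|x|\le \tfrac12\}$ and $\{|x|>\tfrac12\}$, for $k=0$ (L\'{e}vy property) and $k=6+\gamma$ for a suitable $\gamma\in(0,\epsilon)$ (Assumption (ii)). Near the origin, condition (ii) on $f$ gives $f(x)\lesssim (\log|x|)^{-2}$ and, crucially, the \emph{lower} bound $f(x)\gtrsim|x|^2$ is not needed here; what matters is that $(x_1^4+x_2^4)^{-1}\asymp |x|^{-4}$ on $\mathfrak{R}$ (since $x_1^4+x_2^4 \asymp (x_1^2+x_2^2)^2$ up to constants), so the integrand $|x|^2\cdot|x|^{-4}f(x)\lesssim |x|^{-2}(\log|x|)^{-2}$ is integrable on a punctured neighborhood of $0$ in $\mathds{R}^2$ after passing to polar coordinates (the radial integral $\int_0 r^{-1}(\log r)^{-2}\,dr$ converges). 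Away from the origin, $(x_1^4+x_2^4)^{-1}$ is bounded on $\{|x|>\tfrac12\}$, and condition (iii) gives $f(x)\lesssim(1+|x|)^{-(6+\epsilon)}$, so $|x|^{8+\gamma}\,\nu(dx)\lesssim |x|^{8+\gamma}(1+|x|)^{-(6+\epsilon)}\,\lambda^2(dx)$, which is integrable over $\{|x|>\tfrac12\}\subseteq\mathds{R}^2$ provided $8+\gamma - (6+\epsilon) < -2$, i.e. $\gamma < \epsilon$; choosing such a $\gamma$ settles both the finiteness of $\int(|x|^2\wedge 1)\,\nu(dx)$ and Assumption (ii). One also checks $\nu(\mathds{R}^2\setminus[0,\infty)^2)=0$ immediately from the indicator $\one_{\mathfrak{R}}$, giving Assumption (i).

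\textbf{Step 2: Assumption \ref{LevyAssum} (iv).} Here I would analyze $U_1(x)=\nu([x,\infty)\times\mathds{R}_+)=\int_x^\infty\!\!\int_0^\infty (s^4+t^4)^{-1}f(s,t)\,dt\,ds$ (and symmetrically $U_2$). Differentiating under the integral sign gives $U_1'(x) = -\int_0^\infty (x^4+t^4)^{-1}f(x,t)\,dt$, which is a genuine negative function: strict negativity follows because condition (i) on $f$ forces $f(x,t)>0$ for $x>0$ on the relevant set, so the integrand is strictly positive on a set of positive measure (I will need to read (i) carefully — as written it asserts positivity of $f$ on $(\{0\}\times\mathds{R}_+^\ast)\cup(\mathds{R}_+^\ast\times\{0\})$, i.e. on the coordinate axes; combined with continuity of $f$ this yields $f>0$ on a neighborhood of the axes, enough to make the $t$-integral defining $U_1'(x)$ strictly positive for each $x>0$). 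The $\mathcal{C}^1$ property of $U_1$ follows from differentiating under the integral with the same domination as in Step 1, and surjectivity onto $(0,\infty)$ follows since $U_1$ is continuous, strictly decreasing, $U_1(x)\to 0$ as $x\to\infty$ (from the tail bound in Step 1) and $U_1(x)\to\infty$ as $x\downarrow 0$ (from the lower bound $f(x)\gtrsim|x|^2$ in (ii), which makes the integrand $\gtrsim |x|^{-2}\cdot(\text{something})$ blow up — here is exactly where the lower bound on $f$ is used). For the quantitative bounds \eqref{UDerCond}: the first, $\inf_{0<x\le 1}|U_1'(x)|>0$, holds because on $(0,1]$ the integral $\int_0^\infty(x^4+t^4)^{-1}f(x,t)\,dt$ is bounded below uniformly (the contribution from $t$ near a point where $f>0$, with $x^4\le 1$, is bounded below); the second, $\sup_{x>0}(1\wedge x^3)|U_1'(x)|<\infty$, I would get by estimating $|U_1'(x)|\lesssim \int_0^\infty (x^4+t^4)^{-1}(\text{bound on }f)\,dt$, splitting the $t$-integral at $t=x$ and using that $\int_0^\infty(x^4+t^4)^{-1}\,dt \asymp x^{-3}$ by scaling, together with the boundedness of $f$.

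\textbf{Main obstacle.} The routine parts are the integrability estimates (Step 1), which are standard polar-coordinate/scaling computations. The delicate point is Step 2: extracting the precise behavior of $U_k'$ near $0$ and at $\infty$ — in particular verifying the two one-sided bounds in \eqref{UDerCond} — requires carefully tracking how the singularity $(x_1^4+x_2^4)^{-1}$ interacts with the $t$-integration, and correctly interpreting the somewhat terse positivity hypothesis (i) on $f$ so that $U_k'$ is strictly negative and bounded away from $0$ on $(0,1]$. I expect that establishing $\inf_{0<x\le 1}|U_k'(x)|>0$ and the blow-up $U_k(x)\to\infty$ as $x\downarrow 0$ (hence surjectivity) from the lower bound $|x|^2\lesssim f(x)$ will be the step demanding the most care.
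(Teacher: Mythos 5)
Your overall strategy (split the moment integrals at $|x|=\tfrac12$, differentiate $U_k$ under the integral sign, then use scaling for the upper bound on $|U_k'|$ and a local positivity/Fatou argument for the lower bound) matches the paper's proof in all essentials. Your reading of hypothesis (i) as ``$f$ strictly positive on the two positive coordinate axes'' is also the intended one and is exactly what the paper uses to get $f(\cdot,0)>0$ inside the $t$-integral defining $U_1'$.

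There is, however, a genuine gap in Step~1, in the verification of Assumption~\ref{LevyAssum}~(ii). For the region $\{|x|>\tfrac12\}$ you bound $(x_1^4+x_2^4)^{-1}$ merely by a constant, obtaining the estimate $|x|^{8+\gamma}\,\nu(dx)\lesssim |x|^{8+\gamma}(1+|x|)^{-(6+\epsilon)}\,\lambda^2(dx)$. Passing to polar coordinates, the resulting radial integral $\int r^{8+\gamma}(1+r)^{-(6+\epsilon)}\,r\,dr$ converges only when $8+\gamma-(6+\epsilon)<-2$, which is $\gamma<\epsilon-4$, \emph{not} $\gamma<\epsilon$ as you wrote. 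Since the hypothesis is only ``for some $\epsilon>0$'', there may be no admissible $\gamma>0$ at all under your estimate, so as written the argument does not close. The fix (and what the paper does) is to retain the decay of the density factor: on $\mathfrak{R}$ one has $(x_1^4+x_2^4)^{-1}\asymp|x|^{-4}$ globally, so the $(8+\gamma)$-th moment integrand is $\lesssim |x|^{4+\gamma}f(x)\lesssim |x|^{4+\gamma}(1+|x|)^{-(6+\epsilon)}$ and the radial integral $\int_0^\infty r^{4+\gamma}(1+r)^{-(6+\epsilon)}\,r\,dr$ converges precisely when $\gamma<\epsilon$, e.g.\ for the paper's choice $\gamma=\epsilon/2$. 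With that correction, the rest of your proposal goes through; in particular your treatment of (iv) --- strict negativity of $U_k'$ from (i) plus continuity, the $x^{-3}$ upper bound by scaling, and a neighborhood-of-the-axis lower bound as $x\downarrow 0$ (the paper achieves the last via Fatou's lemma, a minor variation on your argument) --- is sound.
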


\begin{proof}
First, observe that $\nu$ is a L\'{e}vy measure since
$$\int_{\mathds{R}_+^2} |x|^2\,\nu(dx) \lesssim \int_0^{\frac{1}{2}} r^2r^{-4} (\log r)^{-2} r\,dr + \int_{\frac{1}{2}}^\infty r^2 r^{-4} (1+r)^{-(6+\epsilon)}r\,dr <\infty$$
holds. Next, we turn to the claimed Assumptions \ref{LevyAssum} (i), (ii) and (iv).
\begin{itemize}
\item[(i)]
This is obviously true due to $\mathfrak{R} \subseteq \mathds{R}_+^2$.
\item[(ii)]
Note that we have
$$\int_{\mathds{R}_+^2} |x|^{8+\frac{\epsilon}{2}}\,\nu(dx) \lesssim \int_{\mathds{R}_+^2} |x|^{4+\frac{\epsilon}{2}}f(x)\,\lambda^2(dx) \lesssim \int_0^\infty r^{4+\frac{\epsilon}{2}}(1+r)^{-(6+\epsilon)} r\,dr <\infty.$$
Hence, Sato \cite{Sat}[Theorem 25.3] yields that the $(8+\gamma)$-th moment with $\gamma\gl\frac{\epsilon}{2}>0$ of the corresponding L\'{e}vy process exists.

\item[(iv)]
First, observe
\begin{equation} \label{USurEq}
0=\lim_{x_1\uparrow\infty} U_1(x_1) \leq \lim_{x_1\downarrow 0} U_1(x_1) = \infty
\end{equation}
because of 
$$[x_1,\infty)\times\mathds{R}_+ \downarrow \emptyset,\quad x_1\uparrow\infty$$
and
$$\nu(\mathds{R}_+^2) = \int_{\mathds{R}_+^2}(x_1^4+x_2^4)^{-1}f(x)\,\lambda^2(dx) \gtrsim \int_0^{\frac{1}{2}} r^{-4}\cdot r^2 r\,dr = \infty.$$
Next, $U_1(x_1)>0$ follows from $f(\cdot,0)>0$ on $(0,\infty)$ and the continuity of $f$. Hence (\ref{USurEq}) yields that $U_1\,:\,(0,\infty)\to (0,\infty)$ is a surjection. Furthermore, we have for $x_1>0$
\begin{equation} \label{UDiffEq}
U_1'(x_1) = \frac{\partial}{\partial x_1} \int_{x_1}^\infty \int_0^\infty (y_1^4+y_2^4)^{-1} f(y)\,dy_2\,dy_1 = - \int_0^\infty (x_1^4+y_2^4)^{-1} f(x_1,y_2)\,dy_2.
\end{equation}
Again, due to the continuity of $f$ and $f(\cdot,0)>0$, this implies $U_1'<0$ on $(0,\infty)$. Hence, $U_1$ is also injective, i.e. a bijection. Finally, (\ref{UDiffEq}) also implies (\ref{UDerCond}) for $k=1$. Observe for this purpose
$$|U_1'(x_1)|\leq \|f\|_{\infty} \int_0^\infty (x_1^4 + y_2^4)^{-1}\,dy_2 = \frac{\|f\|_\infty \sqrt{2} \pi}{4 x_1^3} \lesssim x_1^{-3},\quad x_1>0$$
and, with the use of Fatou's Lemma,
$$\liminf_{x_1\to 0} |U_1'(x_1)| \geq \int_0^\infty \liminf_{x_1\to 0} [(x_1^4+y_2^4)^{-1}f(x_1,y_2)]\,dy_2 = \int_0^\infty y_2^{-4} f(0,y_2)\,dy_2 > 0.$$
Now, (iv) is verified since nothing changes with $U_2$ instead of $U_1$.
\end{itemize}
\end{proof}

\begin{cor} \label{BGCor}
It exists for every $0\leq\beta\leq 2$ a L\'{e}vy measure $\nu_\beta$ with Blumenthal Getoor index (BGi) $\beta$, such that $\nu_\beta$ fulfills the Assumptions \ref{LevyAssum}.
\end{cor}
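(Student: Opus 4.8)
The plan is to construct, for each fixed $\beta\in[0,2]$, a function $f=f_\beta$ satisfying the hypotheses (i)--(iii) of Proposition \ref{BGProp}, in such a way that the resulting L\'evy measure $\nu_\beta(dx)=\one_{\mathfrak R}(x)(x_1^4+x_2^4)^{-1}f_\beta(x)\,\lambda^2(dx)$ has Blumenthal--Getoor index exactly $\beta$. By Proposition \ref{BGProp} this already gives Assumptions \ref{LevyAssum} (i), (ii), (iv) for free, so only the BGi computation and Assumption \ref{LevyAssum} (iii) (the Fourier decay of $(x_1^4+x_2^4)\nu_\beta$) need separate attention. Recall that the BGi of $\nu$ is $\inf\{p\ge 0:\int_{|x|\le 1}|x|^p\,\nu(dx)<\infty\}$, so we must engineer the behavior of $f_\beta$ near the origin to make $\int_{|x|\le 1}|x|^p\,\nu_\beta(dx)$ finite for $p>\beta$ and infinite for $p<\beta$.

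First I would treat the generic case $0<\beta<2$. Fix a smooth cutoff $\chi$ that is $1$ on $\{|x|\le 1/4\}$ and $0$ on $\{|x|\ge 1/2\}$, and set, in polar-type coordinates near zero, $f_\beta(x)\gl c_\beta\,|x|^{4-\beta}\,g(x/|x|)\,\chi(x)+(1-\chi(x))(1+|x|)^{-(6+\epsilon)}$ with $\epsilon=1$, where $g$ is a fixed strictly positive smooth function on the arc $\{x\in\mathds{R}_+^2:|x|=1\}$ chosen so that $f_\beta(\cdot,0)>0$ and $f_\beta(0,\cdot)>0$ (hypothesis (i)), and $c_\beta>0$ a normalizing constant. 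Near the origin this gives $\nu_\beta(dx)\asymp |x|^{-\beta-4+4}\,\lambda^2(dx)=|x|^{-\beta}\cdot|x|^{-2}\cdot|x|^2\,dx$-type behavior; more precisely, passing to polar coordinates, $\int_{|x|\le 1}|x|^p\,\nu_\beta(dx)\asymp\int_0^{1/4} r^{p}\cdot r^{-4}\cdot r^{4-\beta}\cdot r\,dr=\int_0^{1/4} r^{p-\beta-1}\,dr$, which is finite iff $p>\beta$. This pins the BGi at $\beta$. One must also check hypothesis (ii): $|x|^2\lesssim f_\beta(x)$ near zero holds because $4-\beta\le 4\le$ is compatible only if $4-\beta\le 2$, i.e. $\beta\ge 2$ — so in fact the lower bound $|x|^2\lesssim f_\beta(x)$ in (ii) forces $4-\beta\le 2$. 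This is the first real obstacle: for $\beta<2$ the exponent $4-\beta>2$ and $|x|^{4-\beta}$ is \emph{smaller} than $|x|^2$ near zero, violating (ii). I would instead keep $f_\beta$ bounded below by $|x|^2$ by taking $f_\beta(x)\asymp |x|^2(\log(1/|x|))^{-s_\beta}$ near zero for a suitable $s_\beta>0$; then $\int_0^{1/4}r^{p}\cdot r^{-4}\cdot r^2(\log(1/r))^{-s_\beta}\,r\,dr=\int_0^{1/4}r^{p-1}(\log(1/r))^{-s_\beta}\,dr$, which is finite for all $p>0$ and for $p=0$ iff $s_\beta>1$; so this construction only reaches $\beta\in\{0\}\cup\{2\}$ cleanly. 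The honest route, which I expect the paper takes, is: for $\beta=2$ use the logarithmic profile $f(x)\asymp|x|^2(\log(1/|x|))^{-2}$ near $0$ (exactly the borderline allowed by (ii), and note $\int_0^{1/4}r^{-1}(\log 1/r)^{-2}dr<\infty$ while with any smaller power of the log it diverges, giving BGi $=2$); for $\beta=0$ use $f$ bounded away from $0$ near the origin, e.g. $f(x)\asymp 1$, giving $\int_{|x|\le 1}|x|^p\nu(dx)\asymp\int_0^{1/4}r^{p-4}\cdot 1\cdot r\,dr=\int_0^{1/4}r^{p-3}dr<\infty$ for all $p\ge 0$, hence BGi $=0$; and for $0<\beta<2$ interpolate with $f_\beta(x)\asymp|x|^2(\log(1/|x|))^{-2}+|x|^{4-\beta}$ near zero — wait, this still has the $|x|^{4-\beta}\le|x|^2$ issue. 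The resolution is to note that hypothesis (ii) only demands $|x|^2\lesssim f(x)$, so take $f_\beta(x)\gl\bigl(|x|^2+|x|^{4-\beta}\bigr)(\log(1/|x|))^{-2}$ near the origin — no, the dominant term near $0$ among $|x|^2$ and $|x|^{4-\beta}$ (with $\beta<2$) is $|x|^2$, so the BGi it produces is again governed by the $|x|^2(\log)^{-2}$ term, i.e. $2$, not $\beta$.

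Given the tension above, the correct plan is the following and I would state it carefully. The constraint $f(x)\gtrsim|x|^2$ near zero (hypothesis (ii)) already forces $\int_{|x|\le 1}|x|^p\nu(dx)\gtrsim\int_0^{1/4}r^{p-1}dr<\infty$ for every $p>0$, so with the Proposition's setup every $\nu$ produced has BGi $\le$ the infimum over $p$ with $\int_0^{1/4}r^{p-1}(\text{slowly varying})\,dr<\infty$; to push the BGi \emph{up to} $2$ one exploits the log factor, and to push it \emph{down to any} $\beta\in[0,2]$ one allows $f$ to \emph{grow} faster than $|x|^2$ near zero, namely $f_\beta(x)\asymp |x|^{2}$ is the slowest; taking $f_\beta(x)\asymp|x|^{2}$ exactly gives $\int_0^{1/4}r^{p-1}dr$, finite for $p>0$, infinite for $p=0$, hence BGi $=0$; taking $f_\beta(x)\asymp|x|^{2}(\log 1/|x|)^{-2}$ gives, as computed, finiteness even at $p=0$... no: $\int_0^{1/4}r^{-1}(\log 1/r)^{-2}dr<\infty$, so BGi $<0$ is impossible, BGi $=0$ again. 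To obtain intermediate $\beta$ one must \emph{violate} the lower bound $|x|^2\lesssim f(x)$, which means Corollary \ref{BGCor} cannot be deduced from Proposition \ref{BGProp} alone and instead needs a direct construction: take $\nu_\beta(dx)=|x|^{-2-\beta}\,\psi(x/|x|)\,\chi(x)\,\lambda^2(dx)+(\text{tail})$ with $\psi>0$ smooth on the positive arc, verify directly that it is a L\'evy measure with BGi $=\beta$ (polar coordinates: $\int_0^{1/4}r^{p}r^{-2-\beta}r\,dr=\int_0^{1/4}r^{p-\beta-1}dr$), that it charges only $\mathds{R}_+^2$, has all moments (tail is compactly supported times $|x|^{-2-\beta}$ near $0$, which is integrable against $|x|^{8+\gamma}$ automatically, and the far tail is made to decay fast), that $U_k$ is a $\mathcal{C}^1$ bijection with $U_k'<0$ satisfying (\ref{UDerCond}) (computed as in the proof of Proposition \ref{BGProp}), and finally that $\mathcal{F}\bigl((x_1^4+x_2^4)\nu_\beta\bigr)$ decays like $(1+|u_1|)^{-1}(1+|u_2|)^{-1}$. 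The main obstacle, and the step deserving the most care, is precisely this last Fourier-decay condition (Assumption \ref{LevyAssum} (iii)): since $(x_1^4+x_2^4)\nu_\beta(dx)$ has a density of the form (smooth, compactly supported bump near $0$ that is $\asymp|x|^{2-\beta}\psi$) plus (smooth rapidly decaying tail), and such a density is Lipschitz (indeed $\mathcal{C}^1$ away from a mild singularity at $0$ when $\beta>2$, but $\beta\le 2$ so $|x|^{2-\beta}$ is continuous), one gets $|u_k|^{-1}$ decay in each variable by a one-dimensional integration-by-parts / Riemann--Lebesgue-with-rate argument applied to the partial Fourier transform in $x_k$; one has to arrange the angular profile $\psi$ and the cutoff so that the density is genuinely $W^{1,1}$ in each variable separately, which is where choosing $\psi$ smooth and the product-type cutoff pays off. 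I would conclude by assembling these four verifications, noting that the edge cases $\beta=0$ and $\beta=2$ are covered either by this same formula or, for $\beta=2$, by the borderline choice $f(x)\asymp|x|^2(\log 1/|x|)^{-\rho}$ with $1<\rho\le 2$ fed into Proposition \ref{BGProp}, which does satisfy (ii) and yields BGi $=2$.
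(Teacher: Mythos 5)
Your plan is derailed at the outset by the wrong exponent, and the obstacle you then spend most of the write-up wrestling with is an artifact of that mistake. For the density of $\nu_\beta$ to behave like $|x|^{-2-\beta}$ near the origin (which is what gives Blumenthal--Getoor index $\beta$: $\int_{|x|\le 1}|x|^p\,|x|^{-2-\beta}\,\lambda^2(dx)\asymp\int_0^1 r^{p-\beta-1}\,dr$, finite iff $p>\beta$), the $f$ fed into Proposition \ref{BGProp} must satisfy $f\asymp(x_1^4+x_2^4)\cdot|x|^{-2-\beta}\asymp|x|^{2-\beta}$, not $|x|^{4-\beta}$. Your first polar computation also contains an arithmetic slip: $r^p\cdot r^{-4}\cdot r^{4-\beta}\cdot r = r^{p-\beta+1}$, not $r^{p-\beta-1}$, and that integral is finite for every $p\ge 0$ whenever $\beta\le 2$, so the exponent $4-\beta$ would in fact give BGi $0$ for all such $\beta$. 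With the correct exponent $2-\beta$ the supposed conflict with hypothesis (ii) of Proposition \ref{BGProp} simply vanishes: for $0\le\beta<2$ one has $0<2-\beta\le 2$, hence $|x|^2\le|x|^{2-\beta}$ for $|x|\le 1$, and $|x|^{2-\beta}\lesssim(\log|x|)^{-2}$ near zero because any positive power of $|x|$ dominates any negative power of $\log|x|$. Consequently your concluding assertion that the Corollary ``cannot be deduced from Proposition \ref{BGProp} alone and instead needs a direct construction'' is false. The paper simply takes $f_\beta(x)=|x|^{2-\beta}e^{-|x|}$ for $0\le\beta<2$ (the exponential factor taking care of the tail condition (iii) of the Proposition) and, for the boundary case $\beta=2$, the log profile $f_2(x)=\phi(|x|)(\log|x|)^{-2}+(1-\phi(|x|))e^{-|x|}$, which is essentially what you arrive at for $\beta=2$.

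The second, independent gap is Assumption \ref{LevyAssum}(iii), the product-type Fourier decay of $(x_1^4+x_2^4)\nu_\beta=f_\beta\,\lambda^2$. You gesture at ``a one-dimensional integration-by-parts / Riemann--Lebesgue-with-rate argument'' but never identify what must actually be checked, namely that $f_\beta$ is regular enough on the closed quadrant for a double integration by parts with boundary terms. Proposition \ref{FourierProp} in the appendix makes this precise: one needs $g$, $\partial g/\partial x_j$, and $\partial^2 g/\partial x_1\partial x_2$ in $L^1(\mathds{R}_+^2)$, plus control of $g$ and $\partial g/\partial x_j$ on the axes, to obtain $|\mathcal{F}f|(u)\lesssim|u_1u_2|^{-1}$ for $u\in(\mathds{R}^\ast)^2$. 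For $f_\beta=r^{2-\beta}e^{-r}$ the mixed second partial has a genuine local singularity, $\left|\partial^2 f_\beta/\partial x_1\partial x_2\right|\lesssim\one_{(0,1)}(r)\,r^{-\beta}+r^2e^{-r}$, and the point that makes the argument go through is that $r^{-\beta}$ is integrable on $\{r<1\}\cap\mathds{R}_+^2$ precisely because $\beta<2$. Your sketch neither isolates this integrability requirement nor verifies it, so as written (iii) is not established, and it is exactly the part of the Corollary that is not subsumed by Proposition \ref{BGProp}.
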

\begin{proof}
We treat the cases $0\leq \beta<2$ and $\beta=2$ separately in two steps:

\vspace{0,2cm}
{\sc step 1.}
The case $0\leq\beta<2$. Set
$$f_\beta(x)\gl r^{2-\beta} e^{-r},\quad r\gl |x|,\quad x\in\mathds{R}_+^2$$
and
$$\nu_\beta(dx)\gl \one_{\mathfrak{R}}(x)(x_1^4+x_2^4)^{-1}f_\beta(x)\,\lambda^2(dx).$$
An easy calculation yields that $\nu_\beta$ is a L\'{e}vy measure of BGi $\beta$. Furthermore, the Assumptions \ref{LevyAssum} (i), (ii) and (iv) are fulfilled because of Proposition \ref{BGProp} and
$$|x|^2\lesssim |x|^{2-\beta}e^{-|x|} \lesssim (\log |x|)^{-2},\quad x\in\mathds{R}_+^2\,:\,|x|\leq\frac{1}{2},\quad 0\leq\beta<2.$$
Next, we show that $\nu_\beta$ fulfills Assumption \ref{LevyAssum} (iii). Note for this after a straightforward calculation the equations
\begin{eqnarray*}
\frac{\partial f_\beta}{\partial x_k}(x) &=& x_k\left((2-\beta)r^{-\beta} - r^{1-\beta}\right)e^{-r},\quad k=1,2, \\
\frac{\partial^2 f_\beta}{\partial x_1\partial x_2}(x) &=& x_1x_2\left(\beta(\beta-2)r^{-\beta-2} + (2\beta-3)r^{-\beta-1} + r^{-\beta}\right)e^{-r}.
\end{eqnarray*}
Hence, it holds for $x\in\mathfrak{R}$, $r=|x|>0$ and $k=1,2$
$$\left|\frac{\partial f_\beta}{\partial x_k}\right|(x) \lesssim \one_{(0,1)}(r)r^{1-\beta}+r^2e^{-r},\quad \left|\frac{\partial^2 f_\beta}{\partial x_1 \partial x_2}\right|(x) \lesssim \one_{(0,1)}(r) r^{-\beta}+r^2e^{-r}.$$
Now, fix any $0<\delta<1$. An application of Proposition \ref{FourierProp} to the function $f_\beta\cdot\one_{[\delta,\infty)^2}$ under consideration of the limit $\delta\downarrow 0$ finally yields  
$$\mathcal{F}((x_1^4+x_2^4)\nu_\beta)(u) \lesssim \frac{1}{|u_1||u_2|},\quad u\in (\mathds{R}^\ast)^2.$$
This proves together with Lemma \ref{trivLevLem} (i) and the continuity of $u\mapsto \mathcal{F}((x_1^4+x_2^4)\nu)(u)$ the Assumption \ref{LevyAssum} (iii) and the first step is accomplished.

\vspace{0,2cm}
{\sc step 2.}
The case $\beta=2$. Let $\phi\,:\mathds{R}_+^2\to [0,1]$ be a $\mathcal{C}^\infty$ function with
\begin{equation*}
\phi(x) =
\begin{cases}
1, & |x|\leq \frac{1}{2} \\
0, & |x|>\frac{3}{4}
\end{cases}, \quad x\in\mathds{R}_+^2.
\end{equation*}
A detailed construction of such a function is given for instance in Rudin \cite{Rud}[\S 1.46]. Set
$$f_2(x)\gl \phi(r)(\log r)^{-2} + (1-\phi(r))e^{-r},\quad r=|x|,\quad x\in\mathfrak{R}$$
and observe that 
$$\nu_2(dx)\gl \one_{\mathfrak{R}}(x) (x_1^4+x_2^4)^{-1} f_2(x)\,\lambda^2(dx)$$
is a L\'{e}vy measure of BGi $2$. Note further that the Assumptions \ref{LevyAssum} (i), (ii) and (iv) hold because of Proposition \ref{BGProp} and
$$|x|^2\lesssim (\log|x|)^{-2} + e^{-|x|}(1-\phi(|x|))\lesssim (\log|x|)^{-2},\quad x\in\mathds{R}_+^2\,:\,|x|\leq \frac{1}{2}.$$ 
Next, we establish the Assumption \ref{LevyAssum} (iii). For this purpose, set $Lr \gl r \log r$, $r>0$ and note that it holds for $0<r<\frac{1}{2}$ and $k=1,2$
\begin{eqnarray*}
\frac{\partial f_2}{\partial x_k}(x) &=& x_k(\phi'(r)(Lr)^{-2}r-2\phi(r)(Lr)^{-3}r),\quad k=1,2, \\
\frac{\partial^2 f_2}{\partial x_1\partial x_2}(x) &=& x_1x_2(\phi''(r)(Lr)^{-2}-4\phi'(r)(Lr)^{-3}-\phi'(r)(Lr)^{-2}r^{-1}+6\phi(r)(Lr)^{-4} \\
&& \hspace{1cm} +4\phi(r)(Lr)^{-3}r^{-1}).
\end{eqnarray*}
This implies for $0<r<\frac{1}{2}$ the asymptotics
$$\left|\frac{\partial f_2}{\partial x_k}\right|(x) \lesssim r^{-1}(\log r)^{-2},\quad \left|\frac{\partial^2 f_2}{\partial x_1\partial x_2}\right|(x) \lesssim r^{-2}(\log r)^{-2}.$$
Observe that we have in the complementary case $r>1$
\begin{eqnarray*}
\frac{\partial f_2}{\partial x_k}(x) &=& -x_kr^{-1}e^{-r}, \\
\frac{\partial^2 f_2}{\partial x_1\partial x_2}(x) &=& x_1x_2(r^{-2} + r^{-3})e^{-r}.
\end{eqnarray*}
This yields for $r>1$ the asymptotics
$$\left|\frac{\partial f_2}{\partial x_k}\right|(x) \lesssim e^{-r},\quad \left|\frac{\partial^2 f_2}{\partial x_1 \partial x_2}\right|(x) \lesssim e^{-r}.$$
Now, we get the claim of Assumption \ref{LevyAssum} (iii) with the same procedure as in the first step. 
\end{proof}

\subsection{Estimating the L\'{e}vy measure}
Denote with $(K_h\lambda^2)\ast((x_1^4+x_2^4)\nu)$ in the following the convolution of the two finite Borel measures $d(K_h\lambda^2)\gl K_h\,d\lambda^2$ and $d((x_1^4+x_2^4)\nu)\gl (x_1^4+x_2^4)\,d\nu$.

\begin{lem} \label{convErrorLem}
Let the above Assumptions \ref{kernelAssum} and \ref{LevyAssum} (iii) hold. Then we have
$$\left|\nu([a,\infty)\times [b,\infty))-
\int_{\mathds{R}^2}g_{a,b}(x)[(K_h\lambda^2)\ast(x_1^4+x_2^4)\nu)](dx)\right|
\lesssim |h\log h| (|(a,b)|^{-2}\vee |(a,b)|^{-4})$$
for all $(a,b)\in\mathfrak{R}$ and $0<h<\frac{1}{2}$.
\end{lem}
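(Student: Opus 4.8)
The plan is to pull the difference into Fourier space, where the decay Assumption~\ref{LevyAssum}(iii) and the smoothing Assumption~\ref{kernelAssum}(iii) can be played against each other. Put $\mu:=(x_1^4+x_2^4)\nu$, a finite Borel measure because $x_1^4+x_2^4\le|x|^4$ and $\int|x|^{8+\gamma}\,\nu(dx)<\infty$ (Assumption~\ref{LevyAssum}(ii)), and $\mu_h:=(K_h\lambda^2)\ast\mu$. Since $(x_1^4+x_2^4)g_{a,b}(x)=\one_{[a,\infty)\times[b,\infty)}(x)$ off the origin, $\nu([a,\infty)\times[b,\infty))=\int g_{a,b}\,d\mu$, so the quantity to bound is $\bigl|\int g_{a,b}\,d(\mu-\mu_h)\bigr|$. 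I would first record the elementary bounds $\|g_{a,b}\|_{L^1}\lesssim|(a,b)|^{-2}$ and $\|g_{a,b}\|_\infty\lesssim|(a,b)|^{-4}$, both coming from $x_1^4+x_2^4\ge\tfrac12|x|^4$ and the inclusion $[a,\infty)\times[b,\infty)\subseteq\{|x|\ge|(a,b)|\}$; in particular $g_{a,b}\in L^1\cap L^\infty$. Using $\mathcal{F}K_h(u)=\mathcal{F}K(hu)$ and the convolution theorem for finite measures, $\mathcal{F}\mu_h=\mathcal{F}K(h\,\cdot\,)\,\mathcal{F}\mu$; mollifying $g_{a,b}$ by $K_\delta$ (so $\widehat{g_{a,b}\ast K_\delta}=\mathcal{F}g_{a,b}\cdot\mathcal{F}K(\delta\,\cdot\,)$ is bounded with compact support, hence $L^1$), Fubini plus Fourier inversion give, for $\rho\in\{\mu,\mu_h\}$, the identity $\int(g_{a,b}\ast K_\delta)\,d\rho=\tfrac1{(2\pi)^2}\int\bigl(\mathcal{F}g_{a,b}\cdot\mathcal{F}K(\delta\,\cdot\,)\bigr)(u)\,\mathcal{F}\rho(-u)\,du$; letting $\delta\downarrow0$ (the measure side by dominated convergence since $g_{a,b}\ast K_\delta\to g_{a,b}$ a.e., $g_{a,b}\in L^\infty$, $\rho$ finite; the Fourier side by dominated convergence once $\mathcal{F}g_{a,b}\cdot\mathcal{F}\mu(-\,\cdot\,)\in L^1$, checked below) yields
$$\int g_{a,b}\,d(\mu-\mu_h)=\frac1{(2\pi)^2}\int_{\mathds{R}^2}\mathcal{F}g_{a,b}(u)\,\bigl(1-\mathcal{F}K(-hu)\bigr)\,\mathcal{F}\mu(-u)\,du .$$

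The heart of the matter is three pointwise estimates of this integrand. First, $|1-\mathcal{F}K(-hu)|\lesssim\min(h|u|,1)$, because $\mathcal{F}K(0)=1$ and $\mathcal{F}K$ is Lipschitz and bounded (Assumption~\ref{kernelAssum}). Second, $|\mathcal{F}\mu(u)|\lesssim(1+|u_1|)^{-1}(1+|u_2|)^{-1}$, which is exactly Assumption~\ref{LevyAssum}(iii). Third --- and this is where the $(a,b)$--dependence is produced ---
$$|\mathcal{F}g_{a,b}(u)|\ \lesssim\ \frac{t^{-2}}{(1+t|u_1|)(1+t|u_2|)},\qquad t:=|(a,b)| .$$
I would obtain this from the scaling identity $\mathcal{F}g_{a,b}(u)=t^{-2}\,\mathcal{F}g_{\hat a,\hat b}(tu)$ with $(\hat a,\hat b):=(a,b)/t$ ranging over the compact unit quarter--circle, together with the bound $|\mathcal{F}g_{\hat a,\hat b}(v)|\lesssim\min(1,|v_1|^{-1})\min(1,|v_2|^{-1})$ \emph{uniformly} in $(\hat a,\hat b)$; the latter combines the crude bound $|\mathcal{F}g_{\hat a,\hat b}|\le\|g_{\hat a,\hat b}\|_{L^1}\lesssim1$ near $v=0$ with integration by parts once (resp.\ twice) in $x_1$ and/or $x_2$ in $\mathcal{F}g_{\hat a,\hat b}(v)=\int_{\hat a}^\infty\!\!\int_{\hat b}^\infty\tfrac{e^{i\langle v,x\rangle}}{x_1^4+x_2^4}\,dx$, the boundary and bulk terms being controlled by identities such as $\int_a^\infty\tfrac{x_1^3}{(x_1^4+x_2^4)^2}\,dx_1=\tfrac1{4(a^4+x_2^4)}$. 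The three bounds together show in particular that $\mathcal{F}g_{a,b}\cdot\mathcal{F}\mu(-\,\cdot\,)\in L^1$, which fills the gap left in the first paragraph.

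Plugging the three bounds in, it remains to estimate $\int_{\mathds{R}^2}\min(h|u|,1)\,\tfrac{du}{(1+t|u_1|)(1+t|u_2|)(1+|u_1|)(1+|u_2|)}$. The substitution $v=tu$ turns this into $t^{-2}\int\min(h|v|/t,1)\,\tfrac{dv}{(1+|v_1|)(1+|v_2|)(t+|v_1|)(t+|v_2|)}$; restricting to the first quadrant and using $\min(h|v|/t,1)\le\min(hv_1/t,1)+\min(hv_2/t,1)$ factorises it, up to a constant, as $t^{-2}A(t,h)B(t)$ with $B(t)=\int_0^\infty\tfrac{dv}{(1+v)(t+v)}=\tfrac{\log t}{t-1}\lesssim\tfrac{1+|\log t|}{1+t}$ and $A(t,h)=\int_0^\infty\tfrac{\min(hv/t,1)}{(1+v)(t+v)}\,dv$. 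Splitting the $v$--integral defining $A$ at the three scales $t$, $1$, $t/h$ gives $A(t,h)\lesssim h(1+|\log h|)/t$ when $t\ge h$, while $A(t,h)\le B(t)\lesssim1+|\log t|$ unconditionally. Putting the pieces together and treating the ranges $t<h$, $h\le t\le1$, $t>1$ separately, with the help of the elementary monotonicity/boundedness facts for $s\mapsto s^2(1+|\log s|)^2$, $s\mapsto s(1+|\log s|)$, $s\mapsto1+\log s$ and $h\mapsto h(1+|\log h|)^2/|\log h|$ on the relevant intervals, one arrives in each case at the asserted bound $|h\log h|\,(|(a,b)|^{-2}\vee|(a,b)|^{-4})$.

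The main obstacle is precisely this final step: it is elementary but calls for careful bookkeeping of the logarithmic and polynomial factors in $h$ and $t$ so that all constants stay uniform in $(a,b)\in\mathfrak{R}$ and $h\in(0,\tfrac12)$; deriving the uniform decay bound on $\mathcal{F}g_{a,b}$, and the rigorous justification of the Parseval identity for the a priori possibly non--absolutely--continuous $\mu,\mu_h$ (handled above by mollification), are secondary technical points.
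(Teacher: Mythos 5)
Your proof is correct, and it reaches the conclusion by a genuinely different route than the paper's. Both pass to Fourier space, use the Lipschitz bound $|1-\mathcal{F}K_h(u)|\lesssim\min(h|u|,1)$, and invoke Assumption~\ref{LevyAssum}(iii) for $\mathcal{F}\mu$; the divergence is in how $|\mathcal{F}g_{a,b}|$ is controlled. The paper works with the two extreme bounds of Corollary~\ref{gabCor} (namely $|(a,b)|^{-2}$ and $|(a,b)|^{-4}|u_1u_2|^{-1}$) together with Lemma~\ref{trivLevLem}(i), valid on $\{|u_1|,|u_2|\geq1\}$, and splits into $I_1$ over $[-1,1]^2$ plus $I_2$ over $\mathds{R}^2$. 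You instead exploit the scaling $\mathcal{F}g_{a,b}(u)=t^{-2}\mathcal{F}g_{\hat a,\hat b}(tu)$ with $t=|(a,b)|$, prove a uniform decay bound on the unit quarter circle by integration by parts, and obtain the interpolated estimate $|\mathcal{F}g_{a,b}(u)|\lesssim t^{-2}\bigl((1+t|u_1|)(1+t|u_2|)\bigr)^{-1}$; then the substitution $v=tu$, the factorization $\min(h|v|/t,1)\leq\min(hv_1/t,1)+\min(hv_2/t,1)$, and a clean three-case analysis in $t$ against the explicit one-dimensional integrals $A(t,h)$, $B(t)$ finish the job. This is not a cosmetic difference: in the ``cross'' region where exactly one $|u_k|$ is below $1$ and the other large, the integrand is covered by neither $I_1$ nor the $I_2$ bound as stated (the latter needs $|\mathcal{F}g_{a,b}(u)|\lesssim|(a,b)|^{-4}(1+|u_1|)^{-1}(1+|u_2|)^{-1}$, which Corollary~\ref{gabCor} with Lemma~\ref{trivLevLem}(i) does not deliver there uniformly in $(a,b)$), so your sharper, scale-adapted bound is doing essential work and effectively repairs/sidesteps this. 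I checked the three ranges $t>1$, $h\leq t\leq1$, $0<t<h$ and they close exactly as you indicate, using $t(1+|\log t|)\leq1$ on $(0,1]$, boundedness of $t^{-2}(1+\log t)$ on $(1,\infty)$, and $h|\log h|\lesssim1$ on $(0,1/2)$. Two points worth stating explicitly in a final write-up: (a) the $\rho$-a.e.\ convergence $g_{a,b}\ast K_\delta\to g_{a,b}$ needs that $\mu$ charges no half-line $\{x_k=\text{const}\}$, which follows from the decay in Assumption~\ref{LevyAssum}(iii) applied to the marginals (e.g.\ via Wiener's lemma), while $\mu_h$ is absolutely continuous by construction; (b) the inequality $\min(a+b,1)\leq\min(a,1)+\min(b,1)$ underlying the factorization deserves a line. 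With these noted, your argument is complete and in fact more robust than the paper's displayed estimate.
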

\begin{proof}
Write

\begin{eqnarray}
&&\left|\nu([a,\infty)\times [b,\infty))-\int_{\mathds{R}^2} g_{a,b}(x)[(K_h\lambda^2)\ast((x_1^4+x_2^4)\nu)](dx)\right| \nonumber\\ &=&
\left|\int_{\mathds{R}^2}g_{a,b}(x)\left\{[(x_1^4+x_2^4)\nu](dx)-[(K_h\lambda^2)\ast((x_1^4+x_2^4)\nu)](dx)\right\}\right|\nonumber \\&=&
\frac{1}{4\pi} \left|\int_{\mathds{R}^2}(\mathcal{F}g_{a,b})(-u)(1-(\mathcal{F}K_h)(u))\mathcal{F}((x_1^4+x_2^4)\nu)(u)\,\lambda^2(du)\right| \label{convErrorEq}
\end{eqnarray}
where we use for the last inequality the Plancherel identity and the fact that a convolution becomes a simple multiplication in the Fourier space for the last inequality. Note further
$$|1-(\mathcal{F}K_h)(u)| = |(\mathcal{F}K)(0)-(\mathcal{F}K)(hu)| \lesssim \min(h|u|,1),\quad u\in\mathds{R}^2,$$
due to the Lipschitz continuity and our assumption that $K$ is normalized. Hence, using Corollary \ref{gabCor} together with Lemma \ref{trivLevLem} (i), (\ref{convErrorEq}) is up to a constant not larger than $I_1+I_2$ with
$$I_1\gl h|(a,b)|^{-2}\int_{[-1,1]^2}|u|(1+|u_1|)^{-1}(1+|u_2|)^{-1}\,\lambda^2(du)$$
and
$$I_2\gl|(a,b)|^{-4}\int_{\mathds{R}^2}\min(h|u|,1) (1+|u_1|)^{-2}(1+|u_2|)^{-2}\,\lambda^2(du).$$
This finally proves under consideration of Lemma \ref{trivLevLem} (ii), Fubinis theorem and
$$\min(h|u|,1) \leq \min(h|u_1|,1) + \min(h|u_2|,1),\quad \int_\mathds{R} (1+|z|)^{-2}\,\lambda^1(dz)<\infty$$
this lemma.
\end{proof}

\begin{lem} \label{InfDecayRateLem}
Let $\varphi_{\Sigma,\nu,\alpha}$ be the characteristic function of an infinitesimal divisible two dimensional distribution with L\'{e}vy triplet $(\Sigma,\nu,\alpha)$ and finite second moment. Then we have
\begin{equation} \label{expZeroEq}
|\varphi_{\Sigma,\nu,\alpha}(u)|\geq e^{-C(1+|u|)^2},\quad u\in\mathds{R}^2
\end{equation}
with a constant $C$ depending only on the triplet $(\Sigma,\nu,\alpha)$.
\end{lem}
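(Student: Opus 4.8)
The plan is to bound $|\varphi_{\Sigma,\nu,\alpha}(u)|$ from below by controlling the real part of the L\'{e}vy--Khintchine exponent. Since $|\varphi_{\Sigma,\nu,\alpha}(u)| = \exp(\Re\Psi(u))$ where $\Psi(u) = i\langle u,\alpha\rangle - \tfrac{1}{2}\langle u,\Sigma u\rangle + \int_{\mathds{R}^2}(e^{i\langle u,x\rangle} - 1 - i\langle u,x\rangle\one_{|x|\leq 1})\,\nu(dx)$, it suffices to prove a lower bound of the form $\Re\Psi(u) \geq -C(1+|u|)^2$. The drift term contributes nothing to $\Re\Psi$, so the task reduces to two pieces: the Gaussian part and the jump part.

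First I would handle the Gaussian part: $\Re\bigl(-\tfrac{1}{2}\langle u,\Sigma u\rangle\bigr) = -\tfrac{1}{2}\langle u,\Sigma u\rangle \geq -\tfrac{1}{2}\|\Sigma\|_{\mathrm{op}}|u|^2 \geq -C_1(1+|u|)^2$, which is immediate since $\Sigma$ is a fixed positive semidefinite matrix. Next, for the jump part I would split the integral at $|x| = 1$. On $\{|x| > 1\}$ one uses $\Re(e^{i\langle u,x\rangle} - 1) \geq -2$, so this contributes at least $-2\nu(\{|x|>1\})$, a finite constant (recall $\nu$ has a finite second moment, hence in particular $\nu(\{|x|>1\})<\infty$). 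On $\{|x|\leq 1\}$ one uses the elementary estimate $|\Re(e^{i\langle u,x\rangle} - 1 - i\langle u,x\rangle)| = |\cos\langle u,x\rangle - 1| \leq \tfrac{1}{2}\langle u,x\rangle^2 \leq \tfrac{1}{2}|u|^2|x|^2$, so this piece is bounded below by $-\tfrac{1}{2}|u|^2\int_{|x|\leq 1}|x|^2\,\nu(dx)$, and $\int_{|x|\leq 1}|x|^2\,\nu(dx)<\infty$ by the defining property of a L\'{e}vy measure (and a fortiori by the finite second moment assumption). Adding the three contributions and absorbing constants gives $\Re\Psi(u) \geq -C(1+|u|)^2$ with $C$ depending only on $(\Sigma,\nu,\alpha)$, hence \eqref{expZeroEq}.

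One technical point to address is the exact form of the L\'{e}vy--Khintchine representation being used: the excerpt works under a finite-second-moment hypothesis and in \eqref{charrepeq} uses the truncation-free exponent $i\langle u,\alpha\rangle + \int(e^{i\langle u,x\rangle}-1-i\langle u,x\rangle)\,\nu(dx)$. If one adopts that representation, the split at $|x|=1$ is unnecessary: one directly estimates $|\Re(e^{i\langle u,x\rangle}-1-i\langle u,x\rangle)| \leq \tfrac{1}{2}|u|^2|x|^2$ for all $x$, so the whole jump part is bounded below by $-\tfrac{1}{2}|u|^2\int_{\mathds{R}^2}|x|^2\,\nu(dx)$, which is finite by hypothesis, and the argument becomes a one-line computation once combined with the Gaussian bound. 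I would state the lemma in whichever convention is consistent with its intended use downstream, but either way the estimate is routine.

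I do not anticipate a genuine obstacle here; the only thing requiring mild care is making sure the quadratic growth rate (as opposed to, say, $|u|$ or $|u|^4$) is the right one — and this is exactly what the two quadratic contributions (Gaussian and small-jump) force, while the large-jump part contributes only a bounded term. The shift from $|u|^2$ to $(1+|u|)^2$ in the statement is cosmetic, ensuring the bound is also informative (nonvacuous) near $u=0$ where $|\varphi|=1$; it costs nothing since $(1+|u|)^2 \geq |u|^2$.
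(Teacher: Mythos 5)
Your proof is correct and is essentially the paper's own argument: both work with the truncation-free L\'{e}vy--Khintchine exponent (valid under the finite second moment hypothesis), bound the Gaussian part by $|\Sigma||u|^2$ and the jump part by $|u|^2\int|x|^2\,\nu(dx)$ via the second-order Taylor estimate, and absorb constants into $C(1+|u|)^2$. The only cosmetic difference is that you bound $\Re\Psi(u)$ directly (so the drift term drops out exactly), whereas the paper bounds $|\Psi(u)|$ and uses $|e^{\Psi}|\geq e^{-|\Psi|}$, which costs an extra $|\alpha||u|$ in the constant but changes nothing substantive.
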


\begin{rem} \rm
Note, that the fast exponential decay to zero in (\ref{expZeroEq}) as $|u|$ tends to infinity results from a possible non-vanishing $\Sigma$. Otherwise $\varphi_{\Sigma,\nu,\alpha}(u)$ may possibly have a slower convergence rate to zero. In this context, review the results in Neumann, Rei{\ss} \cite{NeuRei}. In the case of a compound Poisson process, it is even bounded away from zero, cf. Lemma \ref{CPPDecayLem}.
\end{rem}

\begin{proof}[Proof of Lemma \ref{InfDecayRateLem}]
We have $\varphi(u) = \exp(\Psi(u))$ with
\begin{equation} \label{PsiEq}
\Psi(u) = -\frac{1}{2}\left<u,\Sigma u\right> + i\left<u,\alpha\right> + \int_{\mathds{R}^2}\left (e^{i\left<u,x\right>}-1-i\left<u,x\right>\right)\,\nu(dx),\quad u\in\mathds{R}^2.
\end{equation}
Next, we estimate each summand of $\Psi$ separately:
\begin{eqnarray*}
|\left<u,\Sigma u\right>| &\leq& |u||\Sigma u| \leq |\Sigma||u|^2, \\
|\left<u,\alpha\right>| &\leq& |\alpha||u|,\quad u\in\mathds{R}^2.
\end{eqnarray*}
Furthermore, Sato \cite{Sat}[Lemma 8.6.] yields
$$e^{i\left<u,x\right>} = 1+i\left<u,x\right> + \theta_{u,x}\frac{|\left<u,x\right>|^2}{2}, \quad u,x\in\mathds{R}^2,\quad \theta_{u,x}\in\mathds{C},\quad |\theta_{u,x}|\leq 1$$
which implies
$$\left| \int_{\mathds{R}^2} (e^{i\left<u,x\right>}-1-i\left<u,x\right>)\,\nu(dx)\right|\leq \int_{\mathds{R}^2} |\left<u,x\right>|^2\,\nu(dx)\leq |u|^2\int_{\mathds{R}^2}|x|^2\,\nu(dx).$$
This yields with $C\gl |\alpha|+|\Sigma|+\int_{\mathds{R}^2}|x|^2\,\nu(dx)$ the estimate
$$|\varphi(u)|=|e^{\Psi(u)}| \geq e^{-|\Psi(u)|} \geq e^{-C(1+|u|)^2},\quad u\in\mathds{R}^2$$
and this Lemma is proven.
\end{proof}
In the following we construct, based on low frequency observations, a uniform estimator for the values
$$\{\nu([a,\infty)\times [b,\infty))\,:\,(a,b)\in \mathfrak{R}\}.$$
Let us assume that the corresponding L\'{e}vy process has a finite fourth moment. Our motivation is the following fact:
\begin{eqnarray}
\left(\frac{\partial^4 \Psi_{\Sigma,\nu,\alpha}}{\partial u_1^4} + \frac{\partial^4\Psi_{\Sigma,\nu,\alpha}}{\partial u_2^4}\right)(u) &=& \int_{\mathds{R}^2} e^{i\left<u,x\right>}(x_1^4+x_2^4)\,\nu(dx) \nonumber\\ &=&
\mathcal{F}((x_1^4+x_2^4)\nu)(u),\quad u\in\mathds{R}^2. \label{fourierMotEq}
\end{eqnarray}

\begin{rem} \rm
Note that we have to take at least the third derivations of $\Psi$ in order to remove the Brownian motion part. However, this seems not to be sufficient to deal with L\'{e}vy measures with Blumenthal Getoor indices greater than one. That is why we take the fourth derivations of $\Psi$. Doing so, we are, for example, capable to prove Corollary \ref{BGCor}.
\end{rem}

A simple calculation yields for $k=1,2$
\begin{eqnarray}
\frac{\partial^4 \Psi_{\Sigma,\nu,\alpha}}{\partial u_k^4} &=& 
\frac{\partial^4\varphi_{\Sigma,\nu,\alpha}}{\partial u_k^4} \varphi_{\Sigma,\nu,\alpha}^{-1} 
- 4 \frac{\partial\varphi_{\Sigma,\nu,\alpha}}{\partial u_k} \frac{\partial^3\varphi_{\Sigma,\nu,\alpha}}{\partial u_k^3}\varphi_{\Sigma,\nu,\alpha}^{-2}
- 3 \left(\frac{\partial^2 \varphi_{\Sigma,\nu,\alpha}}{\partial u_k^2}\varphi_{\Sigma,\nu,\alpha}^{-1}\right)^2 \label{deriPsiEq}\\ &&
+ 12 \left(\frac{\partial\varphi_{\Sigma,\nu,\alpha}}{\partial u_k}\right)^2\frac{\partial^2\varphi_{\Sigma,\nu,\alpha}}{\partial u_k^2} \varphi_{\Sigma,\nu,\alpha}^{-3} 
-6\left(\frac{\partial \varphi_{\Sigma,\nu,\alpha}}{\partial u_k} \varphi_{\Sigma,\nu,\alpha}^{-1}\right)^4. \nonumber
\end{eqnarray}
Note that we are going to estimate $\varphi_{\Sigma,\nu,\alpha}$ by
$$\widehat{\varphi}_n(u) = \frac{1}{n}\sum_{t=1}^n e^{i\left<u,X_t-X_{t-1}\right>},\quad u\in\mathds{R}^2.$$
Hence, we set for $k=1,2$
\begin{eqnarray}
\frac{\partial\widehat{\Psi}_n}{\partial u_k^4} &\gl& 
\frac{\partial^4\widehat{\varphi}_n}{\partial u_k^4} \widehat{\varphi}_n^{-1} 
- 4 \frac{\partial\widehat{\varphi}_n}{\partial u_k} \frac{\partial^3\widehat{\varphi}_n}{\partial u_k^3}\widehat{\varphi}_n^{-2}
- 3 \left(\frac{\partial^2 \widehat{\varphi}_n}{\partial u_k^2}\widehat{\varphi}_n^{-1}\right)^2 
+ 12 \left(\frac{\partial\widehat{\varphi}_n}{\partial u_k}\right)^2\frac{\partial^2\widehat{\varphi}_n}{\partial u_k^2} \widehat{\varphi}_n^{-3} \label{deriPsiEqHead}\\ &&
-6\left(\frac{\partial \widehat{\varphi}_n}{\partial u_k} \widehat{\varphi}_n^{-1}\right)^4, \nonumber
\end{eqnarray}
i.e. $\frac{\partial^4\widehat{\Psi}_n}{\partial u_k^4}$ is a function of derivatives of $\widehat{\varphi}_n$ in exactly the same manner as $\frac{\partial^4\Psi_{\Sigma,\nu,\alpha}}{\partial u_k^4}$ is a function of the derivatives of $\varphi_{\Sigma,\nu,\alpha}$, compare (\ref{deriPsiEq}). Of course we cannot write $\widehat{\varphi}_n(u) = e^{\widehat{\Psi}_n(u)}$ since $\widehat{\varphi}_n$ need not be a characteristic function of an infinitesimal divisible measure for each $\omega\in\Omega$. 

Considering (\ref{fourierMotEq}), we set
\begin{equation}\label{NnEq}
\widetilde{N}_n(a,b)\gl\int_{\mathds{R}^2} g_{a,b}(x)\mathcal{F}^{-1}\left(\left(\frac{\partial^4\widehat{\Psi}_n}{\partial u_1^4} + \frac{\partial^4\widehat{\Psi}_n}{\partial u_2^4}\right)\mathcal{F} K_h\right)(x)\,\lambda^2(dx),\quad (a,b)\in\mathfrak{R}
\end{equation}
for an estimator of $\nu([a,\infty)\times[b,\infty))$. Note furthermore that (\ref{NnEq}) is only well-defined on
$$\widetilde{A}_{h,n} \gl \left\{\omega\in\Omega\,:\,\widehat{\varphi}_n(u)\neq 0,\quad\text{for all } u\in\left[-\frac{1}{h},\frac{1}{h}\right]^2\right\},\quad h>0,\quad n\in\mathds{N}$$
because of $\text{supp}(\mathcal{F}K_h)\subseteq \left[-\frac{1}{h},\frac{1}{h}\right]^2$ and $\widehat{\varphi}_n$ has to be non-zero on $\text{supp}(\mathcal{F}K_h)$. At the same time, we have for $\omega\in\widetilde{A}_{h,n}$
$$\left(\frac{\partial^4\widehat{\Psi}_n}{\partial u_1^4} + \frac{\partial^4 \widehat{\Psi}_n}{\partial u_2^4}\right)(\omega)\cdot\mathcal{F}K_h \in L^1(\mathds{R}^2)\cap L^2(\mathds{R}^2)$$
since the left-hand side is a continuous function with compact support. Thus, the inverse Fourier transform in (\ref{NnEq}) is well-defined on $\widetilde{A}_{h,n}$. Set
$$A_{h,n}\gl \left\{|\widehat{\varphi}_n(u)|>\frac{1}{2}|\varphi(u)|,\quad u\in\left[-\frac{1}{h},\frac{1}{h}\right]^2\right\}\subseteq \widetilde{A}_{h,n},\quad h>0,\quad n\in\mathds{N}.$$
Based on the above discussion, finally set
\begin{equation*}
\widehat{N}_n(a,b)\gl
\begin{cases}
\int_{\mathds{R}^2}g_{a,b}(x)\mathcal{F}^{-1}\left(\left(\frac{\partial^4\widehat{\Psi}_n}{\partial u_1^4} + \frac{\partial^4\widehat{\Psi}_n}{\partial u_2^4}\right)\mathcal{F}K_h\right)(x)\,\lambda^2(dx), & \omega\in A_{h,n}, \\
0, & \omega\in A_{h,n}^c,
\end{cases}
\end{equation*}
for all $(a,b)\in\mathfrak{R},\,h>0,\,n\in\mathds{N}.$ Of course, the bandwidth $h=h_n$ has to be chosen in an optimal manner. It turns out that
$$h_n\gl \frac{\log\log n}{\sqrt{\log n}},\quad n\in\mathds{N}$$
yields a satisfying result:
\begin{theorem} \label{LevyEstTh}
It holds under the Assumptions \ref{kernelAssum} and \ref{LevyAssum} (i)-(iii) the asymptotic
\begin{equation} \label{NnEstEq}
\sup_{(a,b)\in\mathfrak{R}} |(a,b)|^2\wedge  |(a,b)|^4 \left|\nu([a,\infty)\times [b,\infty)) - \widehat{N}_n(a,b)\right| = O_{P_{\Sigma,\nu,\alpha}}\left(\frac{(\log\log n)^2}{\sqrt{\log n}}\right),\quad n\to\infty.
\end{equation}
\end{theorem}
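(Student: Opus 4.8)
\emph{Proof plan.} I would decompose the error around the deterministic ``population smoothed'' quantity
$$N_n^\ast(a,b)\gl\int_{\mathds{R}^2}g_{a,b}(x)\,[(K_h\lambda^2)\ast((x_1^4+x_2^4)\nu)](dx),\qquad (a,b)\in\mathfrak{R},$$
writing, on the good event $A_{h,n}$,
$$\nu([a,\infty)\times[b,\infty))-\widehat{N}_n(a,b)=\underbrace{\big(\nu([a,\infty)\times[b,\infty))-N_n^\ast(a,b)\big)}_{\text{bias}}+\underbrace{\big(N_n^\ast(a,b)-\widehat{N}_n(a,b)\big)}_{\text{stochastic}},$$
and treating separately the complementary event $A_{h,n}^c$ on which $\widehat{N}_n\equiv0$. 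For the \emph{bias} I would simply invoke Lemma \ref{convErrorLem}; multiplying its bound by $\eta(a,b)=|(a,b)|^2\wedge|(a,b)|^4$ and using the elementary identity $\eta(a,b)\,(|(a,b)|^{-2}\vee|(a,b)|^{-4})\equiv1$ on $\mathfrak{R}$, the weighted bias is $\lesssim|h\log h|$ uniformly in $(a,b)$. Since $\log h_n\sim-\tfrac12\log\log n$ for $h_n=\frac{\log\log n}{\sqrt{\log n}}$, this is $\asymp\frac{(\log\log n)^2}{\sqrt{\log n}}$, i.e. exactly the claimed rate; so the bias already carries the whole right-hand side, and everything else must be shown to be of smaller order.

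For the \emph{stochastic} term I would first use $\frac{\partial^4\Psi_{\Sigma,\nu,\alpha}}{\partial u_1^4}+\frac{\partial^4\Psi_{\Sigma,\nu,\alpha}}{\partial u_2^4}=\mathcal{F}((x_1^4+x_2^4)\nu)$ from (\ref{fourierMotEq}), the convolution theorem, and a Plancherel identity exactly as in the proof of Lemma \ref{convErrorLem}, to rewrite $N_n^\ast(a,b)-\widehat{N}_n(a,b)$ as a constant multiple of
$$\int_{[-1/h,1/h]^2}(\mathcal{F}g_{a,b})(-u)\,\Delta_n(u)\,(\mathcal{F}K)(hu)\,\lambda^2(du),\qquad\Delta_n\gl\sum_{k=1}^2\left(\frac{\partial^4\widehat{\Psi}_n}{\partial u_k^4}-\frac{\partial^4\Psi_{\Sigma,\nu,\alpha}}{\partial u_k^4}\right),$$
the restriction to $[-1/h,1/h]^2$ coming from $\text{supp}(\mathcal{F}K_h)\subseteq[-1/h,1/h]^2$. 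Bounding the modulus, this is at most $\|\mathcal{F}K\|_\infty\cdot\big(\sup_{|u|\le\sqrt2/h}|\Delta_n(u)|\big)\cdot\int_{[-1/h,1/h]^2}|\mathcal{F}g_{a,b}(u)|\,\lambda^2(du)$, and the last integral I would control uniformly by Corollary \ref{gabCor} and Lemma \ref{trivLevLem}, the only point being that $\sup_{(a,b)\in\mathfrak{R}}\eta(a,b)\int_{[-1/h,1/h]^2}|\mathcal{F}g_{a,b}(u)|\,\lambda^2(du)$ grows at most polynomially in $1/h_n$, hence at most like $\log n$. The substantive step is the bound on $\sup_{|u|\le\sqrt2/h}|\Delta_n(u)|$: expand both $\frac{\partial^4\widehat{\Psi}_n}{\partial u_k^4}$ and $\frac{\partial^4\Psi_{\Sigma,\nu,\alpha}}{\partial u_k^4}$ by (\ref{deriPsiEq})--(\ref{deriPsiEqHead}) as the \emph{same} polynomial in the derivatives $\partial^j\widehat{\varphi}_n$ resp. $\partial^j\varphi_{\Sigma,\nu,\alpha}$ ($0\le j\le4$) and in $\widehat{\varphi}_n^{-1}$ resp. $\varphi_{\Sigma,\nu,\alpha}^{-1}$, then telescope term by term. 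For the bounded factors use $|\partial^j\varphi_{\Sigma,\nu,\alpha}(u)|\le E|X_1|^j<\infty$ (Assumption \ref{LevyAssum}(ii)) and $\sup_{|u|\le\sqrt2/h}|\partial^j\widehat{\varphi}_n(u)|=O_{P_{\Sigma,\nu,\alpha}}(1)$; for the differences of derivatives use the weight function, $|\partial^j\widehat{\varphi}_n(u)-\partial^j\varphi_{\Sigma,\nu,\alpha}(u)|\le w(u)^{-1}d(\widehat{\varphi}_n,\varphi_{\Sigma,\nu,\alpha})\lesssim(\log(1/h))^{1/2+\delta}\,d(\widehat{\varphi}_n,\varphi_{\Sigma,\nu,\alpha})$ on the box; and for the factors $\varphi_{\Sigma,\nu,\alpha}^{-1},\widehat{\varphi}_n^{-1}$ (occurring up to fourth power) use Lemma \ref{InfDecayRateLem}, $|\varphi_{\Sigma,\nu,\alpha}(u)|^{-1}\le e^{C(1+|u|)^2}$, together with $|\widehat{\varphi}_n(u)|^{-1}\le2e^{C(1+|u|)^2}$ on $A_{h,n}$. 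This yields, on $A_{h,n}$,
$$\sup_{|u|\le\sqrt2/h}|\Delta_n(u)|=O_{P_{\Sigma,\nu,\alpha}}\!\big(e^{\kappa/h^2}\,(\log(1/h))^{1/2+\delta}\,d(\widehat{\varphi}_n,\varphi_{\Sigma,\nu,\alpha})\big)$$
for a constant $\kappa=\kappa(\Sigma,\nu,\alpha)$. Combining this with Corollary \ref{Eboundcor} (which gives $d(\widehat{\varphi}_n,\varphi_{\Sigma,\nu,\alpha})=O_{P_{\Sigma,\nu,\alpha}}(n^{-1/2})$ via Markov) and with $h=h_n$, so that $e^{\kappa/h_n^2}=n^{\kappa/(\log\log n)^2}$ and $\log(1/h_n)\asymp\log\log n$, the stochastic contribution is
$$O_{P_{\Sigma,\nu,\alpha}}\!\big((\log n)\,(\log\log n)^{1/2+\delta}\,n^{\kappa/(\log\log n)^2-1/2}\big)=o_{P_{\Sigma,\nu,\alpha}}\!\Big(\tfrac{(\log\log n)^2}{\sqrt{\log n}}\Big),$$
since $n^{\kappa/(\log\log n)^2-1/2}\to0$ faster than any negative power of $\log n$.

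Finally, on $A_{h,n}^c$ one has $\widehat{N}_n(a,b)=0$, so the supremum in (\ref{NnEstEq}) equals $\sup_{(a,b)\in\mathfrak{R}}\eta(a,b)\,\nu([a,\infty)\times[b,\infty))$; since $x_1\ge a,\,x_2\ge b$ forces $|x|^2\ge|(a,b)|^2$, distinguishing $|(a,b)|\le1$ from $|(a,b)|>1$ and using the finite second, resp. fourth, moment of $\nu$ shows this is a finite constant $C$. Thus the contribution on $A_{h,n}^c$ is $\le C\,\one_{A_{h,n}^c}$, and, reading off from the fact that $|\widehat{\varphi}_n-\varphi_{\Sigma,\nu,\alpha}|\ge\tfrac12|\varphi_{\Sigma,\nu,\alpha}|$ somewhere on the box, $A_{h,n}^c\subseteq\{d(\widehat{\varphi}_n,\varphi_{\Sigma,\nu,\alpha})\gtrsim(\log\log n)^{-1/2-\delta}n^{-\kappa/(\log\log n)^2}\}$; so Markov and Corollary \ref{Eboundcor} give $P_{\Sigma,\nu,\alpha}(A_{h,n}^c)\lesssim(\log\log n)^{1/2+\delta}n^{\kappa/(\log\log n)^2-1/2}\to0$ even faster than $\big(\tfrac{(\log\log n)^2}{\sqrt{\log n}}\big)^2$, whence $C\,\one_{A_{h,n}^c}=o_{P_{\Sigma,\nu,\alpha}}\big(\tfrac{(\log\log n)^2}{\sqrt{\log n}}\big)$. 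Adding the three pieces yields (\ref{NnEstEq}). I expect the main obstacle to be the deconvolution bound on $\sup_{|u|\le\sqrt2/h_n}|\Delta_n(u)|$: one must push the $n^{-1/2}$ rate of the empirical characteristic function through the rational expression (\ref{deriPsiEq}) while the ill-posedness factor $|\varphi_{\Sigma,\nu,\alpha}|^{-1}$ --- which, because of a possibly non-vanishing Brownian part, may decay as fast as $e^{c|u|^2}$ (Lemma \ref{InfDecayRateLem}) --- explodes on the box; the choice $h_n=\frac{\log\log n}{\sqrt{\log n}}$ is precisely the one keeping $e^{\kappa/h_n^2}=n^{o(1)}$ (whereas $h_n\asymp(\log n)^{-1/2}$ would only give $n^{O(1)}$) at the price of merely an extra $\log\log n$ factor in the bias.
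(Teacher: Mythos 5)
Your proposal is correct and follows essentially the same three-step strategy as the paper: control $P(A_{h,n}^c)$, telescope the rational expression for $\partial^4\widehat{\Psi}_n/\partial u_k^4-\partial^4\Psi/\partial u_k^4$ using the weighted distance $d(\widehat{\varphi}_n,\varphi)$ against the ill-posedness factor $|\varphi|^{-1}\lesssim e^{C(1+|u|)^2}$, and add the deterministic bias from Lemma \ref{convErrorLem}, with $h_n=\log\log n/\sqrt{\log n}$ chosen so that $e^{\kappa/h_n^2}=n^{o(1)}$. The only (welcome) difference is that you make the contribution on $A_{h,n}^c$ explicit, which the paper leaves implicit after establishing $P(A_{h,n}^c)\to 0$.
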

\begin{proof}
The proof is divided into three steps. The probability that the inverse Fourier transform is well defined tends to one. This is shown in the first step. The second step estimates the approximation error between $\widehat{\Psi}_n$ and $\Psi$. Finally, the third step uses these estimations together with the statement of Lemma \ref{convErrorLem} to prove the desired convergence rate.

\vspace{0,2cm}
{\sc step 1.}
First, we establish $P(A_n^c)\to 0,\,n\to\infty$ with $A_n\gl A_{h_n,n},\,n\in\mathds{N}$ and $P\gl P_{\Sigma,\nu,\alpha}$. Note for this that we have with $B_{\frac{1}{h}}\gl \left[-\frac{1}{h},\frac{1}{h}\right]^2$ and $\varphi\gl \varphi_{\Sigma,\nu,\alpha}$ the inclusions
\begin{eqnarray*}
A_{h,n}^c = \left\{\exists u\in B_{\frac{1}{h}}\,:\,|\widehat{\varphi}_n(u)|\leq\frac{1}{2}|\varphi(u)|\right\} &\subseteq& \left\{\exists u\in B_{\frac{1}{h}}\,:\,\frac{|\widehat{\varphi}_n(u)-\varphi(u)|}{|\varphi(u)|} \geq \frac{1}{2}\right\} \\&\subseteq&
\left\{\exists u\in B_{\frac{1}{h}}\,:\,\frac{d(\widehat{\varphi}_n,\varphi)}{|\varphi(u)||w(u)|} \geq\frac{1}{2}\right\}.
\end{eqnarray*}
Observe
$$w(u)=(\log(e+|u|))^{-\frac{1}{2}-\delta} \geq e^{-\left(\frac{1}{2}+\delta\right)|u|},\quad u\in\mathds{R}^2,$$
so that together with Lemma \ref{InfDecayRateLem} we obtain
\begin{equation} \label{wphiEstEq}
|\varphi(u)||w(u)|\geq e^{-C(1+|u|)^2},\quad u\in\mathds{R}^2
\end{equation}
with a constant $C>0$. This, finally, implies
$$\left\{\exists u\in B_{\frac{1}{h}}\,:\,\frac{d(\widehat{\varphi}_n,\varphi)}{|\varphi(u)||w(u)|} \geq \frac{1}{2}\right\}\subseteq \left\{d(\widehat{\varphi}_n,\varphi)e^{C\left(1+\frac{\sqrt{2}}{h}\right)^2}\geq\frac{1}{2}\right\},$$
and, the Markov inequality yields together with Corollary \ref{Eboundcor}
\begin{equation} \label{PAnEq}
P(A_{h,n}^c)\leq n^{-\frac{1}{2}}e^{C\left(1+\frac{\sqrt{2}}{h}\right)^2}O(1) \lesssim n^{-\frac{1}{2}}e^{\frac{4C}{h^2}} = e^{-\frac{1}{2} \log n + \frac{4C}{h^2}},\quad n\in\mathds{N}.
\end{equation}
A substitution with $h_n = \frac{\log\log n}{\sqrt{\log n}}$ yields
$$-\frac{1}{2}\log n + \frac{4C}{h_n^2} = - \frac{1}{2}\log n + 4C\frac{\log n}{(\log\log n)^2} \to -\infty,\quad n\to\infty,$$
so that (\ref{PAnEq}) implies $P(A_n^c)\to 0,\,n\to\infty$.

\vspace{0,2cm}
{\sc step 2.}
Next, we consider the difference
$$\frac{\partial^4\widehat{\Psi}_n}{\partial u_k^4}-\frac{\partial^4\Psi}{\partial u_k^4},\quad k=1,2,\quad n\in\mathds{N}.$$
(\ref{deriPsiEq}) and (\ref{deriPsiEqHead}) consist of respectively five terms. Subtracting (\ref{deriPsiEq}) from (\ref{deriPsiEqHead}) results in five difference terms. We rearrange for $k=1,2$ these terms in (\ref{term1Eq}) - (\ref{term5Eq}) for our needs:
\begin{eqnarray}
&&\frac{\partial^l\varphi}{\partial u_k^l}\varphi^{-1} - \frac{\partial^l\widehat{\varphi}_n}{\partial u_k^l} \widehat{\varphi}_n^{-1},\quad l=1,2,3,4 \label{term1Eq}\\ 
&=& \frac{\partial^l(\varphi-\widehat{\varphi}_n)}{\partial u_k^l}\widehat{\varphi}_n^{-1} + \frac{\partial^l \varphi}{\partial u_k^l}\varphi^{-1}(\widehat{\varphi}_n-\varphi)\widehat{\varphi}_n^{-1}, \nonumber \\ \nonumber \\ \nonumber\\
&&\frac{\partial \varphi}{\partial u_k}\frac{\partial^3\varphi}{\partial u_k^3}\varphi^{-2} -
\frac{\partial \widehat{\varphi}_n}{\partial u_k}\frac{\partial^3\widehat{\varphi}_n}{\partial u_k^3}\widehat{\varphi}_n^{-2} \\ &=& 
\frac{\partial\varphi}{\partial u_k}\varphi^{-1}\left(\frac{\partial^3\varphi}{\partial u_k^3}\varphi^{-1}-\frac{\partial^3\widehat{\varphi}_n}{\partial u_k^3}\widehat{\varphi}_n^{-1}\right) +\frac{\partial^3\widehat{\varphi}_n}{\partial u_k^3}\widehat{\varphi}_n^{-1}\left(\frac{\partial\varphi}{\partial u_k}\varphi^{-1} - \frac{\widehat{\varphi}_n}{\partial u_k}\widehat{\varphi}_n^{-1}\right), \nonumber \\ \nonumber \\ \nonumber \\
&&\left(\frac{\partial^2 \varphi}{\partial u_k^2}\varphi^{-1}\right)^2 - \left(\frac{\partial^2\widehat{\varphi}_n}{\partial u_k^2} \widehat{\varphi}_n^{-1}\right)^2 \\&=& 
\left(\frac{\partial^2\varphi}{\partial u_k^2}\varphi^{-1} + \frac{\partial^2\widehat{\varphi}_n}{\partial u_k^2}\widehat{\varphi}_n^{-1}\right)\left(\frac{\partial^2\varphi}{\partial u_k^2}\varphi^{-1} - \frac{\partial^2\widehat{\varphi}_n}{\partial u_k^2}\widehat{\varphi}_n^{-1}\right), \nonumber\\ \nonumber \\ \nonumber \\
&&\left(\frac{\partial\varphi}{\partial u_k}\right)^2\frac{\partial^2\varphi}{\partial u_k^2} \varphi^{-3} - \left(\frac{\partial \widehat{\varphi}_n}{\partial u_k}\right)^2\frac{\partial^2 \widehat{\varphi}_n}{\partial u_k^2}\widehat{\varphi}_n^{-3} \\&=&
\left(\frac{\partial \varphi}{\partial u_k}\right)^2\varphi^{-2}\left(\frac{\partial^2\varphi}{\partial u_k^2}\varphi^{-1}-\frac{\partial^2\widehat{\varphi}_n}{\partial u_k^2}\widehat{\varphi}_n^{-1}\right) + \frac{\partial^2\widehat{\varphi}_n}{\partial u_k^2} \widehat{\varphi}_n^{-1}\left(\left(\frac{\partial\varphi}{\partial u_k}\varphi^{-1}\right)^2 - \left(\frac{\partial \widehat{\varphi}_n}{\partial u_k} \widehat{\varphi}_n^{-1}\right)^2\right), \nonumber \\ \nonumber \\ \nonumber \\
&&\left(\frac{\partial \varphi}{\partial u_k}\varphi^{-1}\right)^4 - \left(\frac{\partial\widehat{\varphi}_n}{\partial u_k}\widehat{\varphi}_n^{-1}\right)^4  \label{term5Eq} \\ &=& 
\left(\left(\frac{\partial\varphi}{\partial u_k}\varphi^{-1}\right)^2 + \left(\frac{\partial \widehat{\varphi}_n}{\partial u_k}\widehat{\varphi}_n^{-1}\right)^2\right) \left(\left(\frac{\partial\varphi}{\partial u_k}\varphi^{-1}\right)^2 - \left(\frac{\partial \widehat{\varphi}_n}{\partial u_k}\widehat{\varphi}_n^{-1}\right)^2\right). \nonumber
\end{eqnarray}

Next, after some straightforward calculations, observe
\begin{eqnarray}
\frac{\partial \varphi}{\partial u_k} \varphi^{-1} &=& \frac{\partial \Psi}{\partial u_k}, \label{phiDer1Eq}\\
\frac{\partial^2\varphi}{\partial u_k^2} \varphi^{-1} &=& \left(\frac{\partial \Psi}{\partial u_k}\right)^2 + \frac{\partial^2\Psi}{\partial u_k^2}, \\
\frac{\partial^3 \varphi}{\partial u_k^3} \varphi^{-1} &=& \left(\frac{\partial\Psi}{\partial u_k}\right)^3 + \frac{\partial^3 \Psi}{\partial u_k^3} + 3\frac{\partial\Psi}{\partial u_k}\frac{\partial^2\Psi}{\partial u_k^2},  \\
\frac{\partial^4 \varphi}{\partial u_k^4} \varphi^{-1} &=& \left(\frac{\partial \Psi}{\partial u_k}\right)^4 + 3\left(\frac{\partial^2 \Psi}{\partial u_k^2}\right)^2 + 6\left(\frac{\partial \Psi}{\partial u_k}\right)^2\frac{\partial^2\Psi}{\partial u_k^2} + 4\frac{\partial \Psi}{\partial u_k}\frac{\partial^3 \Psi}{\partial u_k^3} + \frac{\partial^4 \Psi}{\partial u_k^4}. \label{phiDer4Eq}
\end{eqnarray}
In what follows, we estimate the derivatives of $\Psi$:
$$\left<u,\Sigma u\right> = \sigma_{11}u_1^2+2\sigma_{12}u_1u_2+\sigma_{22}u_2^2,\quad u\in\mathds{R}^2$$
and the representation (\ref{PsiEq}) yields
$$\frac{\partial\Psi}{\partial u_1}(u) = -\frac{1}{2}(2u_1\sigma_{11}+2u_2\sigma_{12})+i\alpha_1+\int_{\mathds{R}^2}\left(ix_1e^{i\left<u,x\right>}-ix_1\right)\,\nu(dx)$$
which yields together with 
$$\left|e^{i\left<u,x\right>}-1\right| \leq |\left<u,x\right>|\leq |u||x|,\quad u,x\in\mathds{R}^2$$
and $\int_{\mathds{R}^2} |x|^2\,\nu(dx)<\infty$ the inequality
$$\left|\frac{\partial \Psi}{\partial u_1}\right|(u)\lesssim 1+|u|,\quad u\in\mathds{R}^2$$
where the constant in the $\lesssim$ sign depends only on the L\'{e}vy triplet $(\Sigma,\nu,\alpha)$. Similarly we get
$$\left|\frac{\partial^2\Psi}{\partial u_1^2}\right|(u) = \left|-\sigma_{11}-\int_{\mathds{R}^2}x_1^2e^{i\left<u,x\right>}\,\nu(dx)\right|\leq \sigma_{11}+\int_{\mathds{R}^2}x_1^2\,\nu(dx) < \infty$$
and
\begin{eqnarray*}
\left|\frac{\partial^3 \Psi}{\partial u_1^3}\right|(u) &=& \left|-i\int_{\mathds{R}^2}x_1^3e^{i\left<u,x\right>}\,\nu(dx)\right|\leq\int_{\mathds{R}^2}|x_1|^3\,\nu(dx) <\infty, \\
\left|\frac{\partial^4 \Psi}{\partial u_1^4}\right|(u) &=& \left|\int_{\mathds{R}^2}x_1^4e^{i\left<u,x\right>}\,\nu(dx)\right|\leq\int_{\mathds{R}^2}x_1^4\,\nu(dx) <\infty.
\end{eqnarray*}
The derivatives $\frac{\partial^l}{\partial u_2^l},\,l=1,2,3,4$ yield analogous estimates. Hence, (\ref{phiDer1Eq}) - (\ref{phiDer4Eq}) imply
\begin{equation} \label{DphiEq}
\left|\frac{\partial^l \varphi}{\partial u_k^l} \varphi^{-1}\right|(u) \lesssim (1+|u|)^l,\quad u\in\mathds{R}^2,\quad l=1,2,3,4.
\end{equation}
Additionally (\ref{DphiEq}) yields together with (\ref{term1Eq}) on $A_n$
\begin{eqnarray}
\left|\frac{\partial^l \widehat{\varphi}_n}{\partial u_k^l} \widehat{\varphi}_n^{-1}\right|(u) &\leq& \left|\frac{\partial^l\varphi}{\partial u_k^l} \varphi^{-1}\right|(u) + \left|\frac{\partial^l\varphi}{\partial u_k^l}\varphi^{-1} - \frac{\partial^l\widehat{\varphi}_n}{\partial u_k^l}\widehat{\varphi}_n^{-1}\right|(u) \label{DphiHeadEq} \\&\lesssim&
\left(1+\frac{d(\varphi,\widehat{\varphi}_n)}{w(u)|\varphi(u)|}\right)(1+|u|)^l,\quad u\in\left[-\frac{1}{h_n},\frac{1}{h_n}\right]^2, \quad l=1,2,3,4. \nonumber
\end{eqnarray}
Hence, (\ref{deriPsiEq}), (\ref{deriPsiEqHead}) and (\ref{term1Eq}) - (\ref{term5Eq}) and (\ref{DphiEq}), (\ref{DphiHeadEq}) finally yield on $A_n$
\begin{equation*}
\left|\frac{\partial^4\widehat{\Psi}_n}{\partial u_k^4} - \frac{\partial^4 \Psi}{\partial u_k^4}\right|(u) \lesssim \sum_{j=1}^4\left(\frac{d(\varphi,\widehat{\varphi}_n)}{w(u)|\varphi(u)|}\right)^j(1+|u|)^4, \quad u\in\left[-\frac{1}{h_n},\frac{1}{h_n}\right]^2.
\end{equation*}

\vspace{0,2cm}
{\sc step 3.}
Next, observe with Sato \cite{Sat}[Proposition 2.5 (xii)]
$$\mathcal{F}^{-1}\left(\left(\frac{\partial^4\Psi}{\partial u_1^4} + \frac{\partial^4\Psi}{\partial u_2^4}\right)\mathcal{F} K_{h_n}\right)\lambda^2 = (K_{h_n}\lambda^2)\ast((x_1^4+x_2^4)\nu).$$
Using the Plancherel identity, we get the following essential estimates:
\begin{eqnarray}
&&\one_{A_n}\left|\int_{\mathds{R}^2} g_{a,b}(x)\mathcal{F}^{-1}\left(\left(\frac{\partial^4\widehat{\Psi}_n}{\partial u_1^4} + \frac{\partial^4 \widehat{\Psi}_n}{\partial u_2^4} \right) \mathcal{F} K_{h_n}\right)(x)\,\lambda^2(dx) \right.\nonumber\\ &&
\left.-\int_{\mathds{R}^2} g_{a,b}(x)\mathcal{F}^{-1}\left(\left(\frac{\partial^4\Psi}{\partial u_1^4} + \frac{\partial^4 \Psi}{\partial u_2^4} \right) \mathcal{F} K_{h_n}\right)(x)\,\lambda^2(dx)\right| \nonumber\\&=&
\one_{A_n}\frac{1}{4\pi^2} \left|\int_{\mathds{R}^2} (\mathcal{F}g_{a,b})(-u)\sum_{k=1}^2\frac{\partial^4(\widehat{\Psi}_n-\Psi)}{\partial u_k^4}(u)\mathcal{F}K_{h_n}(u)\,\lambda^2(du)\right| \nonumber\\ &\lesssim&
\one_{A_n} |(a,b)|^{-2} \int_{\left[-\frac{1}{h_n},\frac{1}{h_n}\right]^2}\sum_{j=1}^4\left(\frac{d(\varphi,\widehat{\varphi}_n)}{w(u)|\varphi(u)|}\right)^j(1+|u|)^4\,\lambda^2(du). \label{nuEstLastEq}
\end{eqnarray}
Note that
$$d(\varphi,\widehat{\varphi}_n) = O_P(n^{-\frac{1}{2}})$$
and
$$(w(u)|\varphi(u)|)^{-j}(1+|u|)^4\lesssim e^{C(1+|u|)^2},\quad j=1,2,3,4,\quad u\in\mathds{R}^2$$
hold for suitable $C>0$, compare (\ref{wphiEstEq}). Hence, (\ref{nuEstLastEq}) is not larger than

$$|(a,b)|^{-2}\int_{\left[-\frac{1}{h_n},\frac{1}{h_n}\right]^2}e^{C(1+|u|)^2}\,\lambda^2(du)\cdot O_P(n^{-\frac{1}{2}}) = |(a,b)|^{-2}O_P(n^{\epsilon-\frac{1}{2}}),\quad n\to\infty$$
for every $\epsilon>0$. Together with Lemma \ref{convErrorLem} and
$$|h_n\log h_n| = -\frac{\log\log n}{\sqrt{\log n}}(\log\log\log n - \frac{1}{2}\log\log n) \lesssim \frac{(\log\log n)^2}{\sqrt{\log n}}$$
this proves this theorem.
\end{proof}

\subsection{The inverting operation}
Considering the L\'{e}vy copula (\ref{introLevCopEq}), our next goal is to establish an inversion operation. For this purpose, we first define some function spaces and an inversion operation $\mathcal{I}$ on those spaces: Set
\begin{eqnarray*}
\widehat{\mathcal{C}}&\gl& \{g\,:\,(0,\infty)\to\mathds{R}_+,\quad g\in\mathcal{C},\quad\lim_{x\to\infty} g(x)=0\}, \\
\mathcal{D}_\delta &\gl& \{h\,:\, (0,\infty)\to\mathds{R}_+, \quad h \text{ is c\`adl\`ag, decreasing and} \lim_{x\to\infty} h(x) = \delta\},\quad \delta>0, \\
\mathcal{D} &\gl& \bigcup_{\delta>0} \mathcal{D}_\delta 
\end{eqnarray*}
and
\begin{equation} \label{defIEq}
\begin{array}{llcl}
\mathcal{I} \,:\,&\widehat{\mathcal{C}} \times (0,\infty) &\to& \mathcal{D} \\
&(g,\delta) &\mapsto& (z \mapsto \inf\{x\geq \delta\,:\,\inf_{\delta\leq y\leq x} g(y) \leq z\}).
\end{array}
\end{equation}
Furthermore, let $R\,:\,(0,\infty) \to (0,\infty)$ be a function and $(\epsilon_n)$, $(\delta_n)$ be two sequences of positive numbers, such that
$$\gamma_n\gl R(\delta_n)\epsilon_n \to 0,\quad \epsilon_n\downarrow 0,\quad \delta_n\downarrow 0,\quad n\to\infty$$
hold. Note that we have
$$\mathcal{I}(g,\delta)\in\mathcal{D}_\delta,\quad g\in\widehat{C},\quad\delta>0.$$

The next Proposition \ref{invProp} investigates the behavior of an approximation error under the inversion operation $\mathcal{I}$. Note that $\mathcal{I}$ is the pseudo inverse with the starting position $\delta>0$, cf. (\ref{defIEq}). The introduction of such an offset $\delta>0$ is required for the subsequent treatment of the small jumps. Note that we have $\Lambda=\infty$ ($\Lambda$ in Proposition \ref{invProp}) in this section. The case $\Lambda<\infty$ is important for the investigations of the next Section \ref{CPPSec}.

\begin{prop} \label{invProp}
Let $f\,:\,(0,\infty) \to (0,\Lambda)$, $\Lambda\in (0,\infty]$ be a $\mathcal{C}^1$-bijection with 
$$f'<0,\quad \inf_{0<x\leq 1} |f'(x)|>0$$ 
and let $(f_n)_n\subseteq \widehat{\mathcal{C}}$ be a family of functions, such that
$$\sup_{x\geq \delta_n} |f_n(x)-f(x)|\leq \gamma_n,\quad n\in\mathds{N}$$
holds. Fix any $0<a<b<\Lambda$. Then it holds also for each $n\in\mathds{N}$ with $2\gamma_n < a \wedge (\Lambda - b)$ and $\delta_n< f^{-1}(b+2\gamma_n)$ the inequality
$$\sup_{a\leq z\leq b} |\mathcal{I}(f_n,\delta_n)(z) - f^{-1}(z)|\leq 2\gamma_n\left(\inf_{0< x\leq f^{-1}\left(\frac{a}{2}\right)} |f'(x)|\right)^{-1}.$$
\end{prop}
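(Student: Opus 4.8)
The plan is to fix $z\in[a,b]$, write $x_\ast:=f^{-1}(z)\in(0,\infty)$ and $\hat x:=\mathcal{I}(f_n,\delta_n)(z)$, and to control $|\hat x-x_\ast|$ by producing explicit test points on either side of $x_\ast$ and then converting a $\gamma_n$-sized gap between $f$-values into a gap between arguments via the mean value theorem. Throughout one uses that $f^{-1}:(0,\Lambda)\to(0,\infty)$ is a strictly decreasing bijection, that $|f_n-f|\le\gamma_n$ on $[\delta_n,\infty)$, and that $c:=\inf_{0<x\le f^{-1}(a/2)}|f'(x)|>0$; positivity of $c$ follows since $|f'|$ is bounded below on $(0,1]$ by hypothesis and, being continuous and strictly positive, attains a positive minimum on the compact set $[1,f^{-1}(a/2)]$.

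\emph{Upper bound on $\hat x$.} Set $y_0:=f^{-1}(z-\gamma_n)$. From $2\gamma_n<a$ we get $z-\gamma_n>a-\gamma_n>a/2>0$, and $z-\gamma_n<b<\Lambda$, so $y_0$ is well defined; moreover $z-\gamma_n<b+2\gamma_n$ forces $y_0>f^{-1}(b+2\gamma_n)>\delta_n$. Then $f_n(y_0)\le f(y_0)+\gamma_n=z$, hence $\inf_{\delta_n\le y\le y_0}f_n(y)\le z$ and therefore $\hat x\le y_0$ by the definition of $\mathcal{I}$. Applying the mean value theorem to $f$ on $[x_\ast,y_0]$ gives $\gamma_n=f(x_\ast)-f(y_0)=|f'(\xi)|\,(y_0-x_\ast)$ for some $\xi\in[x_\ast,y_0]$; since $f(\xi)\ge f(y_0)=z-\gamma_n>a/2$ we have $\xi<f^{-1}(a/2)$, so $|f'(\xi)|\ge c$, whence $\hat x-x_\ast\le y_0-x_\ast\le\gamma_n/c$.

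\emph{Lower bound on $\hat x$.} Because $f_n$ is continuous, the running minimum $m_n(x):=\inf_{\delta_n\le y\le x}f_n(y)$ is continuous and non-increasing, so $\{x\ge\delta_n:m_n(x)\le z\}$ is a closed half-line; since $f_n(x)\to0<z$ it is nonempty, hence equals $[\hat x,\infty)$ with $\hat x<\infty$, and in particular $m_n(\hat x)\le z$. If $\hat x=\delta_n$ we would get $f_n(\delta_n)=m_n(\delta_n)\le z\le b$, hence $f(\delta_n)\le b+\gamma_n<b+2\gamma_n$ (here $2\gamma_n<\Lambda-b$ guarantees $b+2\gamma_n$ lies in the range of $f$), i.e. $\delta_n>f^{-1}(b+2\gamma_n)$, contradicting the hypothesis; thus $\hat x>\delta_n$. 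As $f_n$ attains its infimum on the compact interval $[\delta_n,\hat x]$, there is $y_0'\in[\delta_n,\hat x]$ with $f_n(y_0')=m_n(\hat x)\le z$, so $f(y_0')\le z+\gamma_n$. If $y_0'\ge x_\ast$ then $\hat x\ge y_0'\ge x_\ast$ and nothing is to be shown; otherwise $y_0'<x_\ast$ gives $0<f(y_0')-f(x_\ast)\le\gamma_n$, and the mean value theorem on $[y_0',x_\ast]$ produces $\xi'\in[y_0',x_\ast]$ with $f(y_0')-z=|f'(\xi')|\,(x_\ast-y_0')$; since $f(\xi')\ge f(x_\ast)=z\ge a>a/2$ we again have $\xi'<f^{-1}(a/2)$ and $|f'(\xi')|\ge c$, so $x_\ast-\hat x\le x_\ast-y_0'\le\gamma_n/c$.

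Combining the two bounds yields $|\hat x-x_\ast|\le\gamma_n/c$ for every $z\in[a,b]$, which is even slightly stronger than the asserted $2\gamma_n/c$. I expect the only genuinely delicate point to be the reduction, in the lower-bound part, from the abstract definition of $\mathcal{I}$ to the usable facts that $m_n(\hat x)\le z$ and that the infimum $m_n(\hat x)$ is attained at some point of $[\delta_n,\hat x]$ — this is exactly where continuity of the $f_n$ and the monotone structure of the running minimum are used. The two numerical hypotheses play purely bookkeeping roles: $2\gamma_n<a\wedge(\Lambda-b)$ keeps every test point and every mean-value point inside the region $(0,f^{-1}(a/2))$ on which $|f'|\ge c$ and keeps all relevant values in $(0,\Lambda)$, while $\delta_n<f^{-1}(b+2\gamma_n)$ rules out the degenerate case $\hat x=\delta_n$.
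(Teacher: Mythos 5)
Your proof is correct. The upper-bound half is essentially identical to the paper's: test at $y_0=f^{-1}(z-\gamma_n)\ge\delta_n$, observe that the running minimum of $f_n$ drops below $z$ there, and then apply the mean value theorem. The lower-bound half is where you take a genuinely different — and sharper — route. The paper first proves the uniform bound $\sup_{x\ge\delta_n}|F_n(x)-f(x)|\le\gamma_n$ for the running minimum $F_n(x)=\inf_{\delta_n\le y\le x}f_n(y)$ and then tests at $x'=f^{-1}(z+2\gamma_n)$; the extra $\gamma_n$ in that shift is needed so that $F_n(x')\ge f(x')-\gamma_n=z+\gamma_n>z$ holds \emph{strictly}, which is what keeps $x'$ out of $\{F_n\le z\}$ and forces $\hat x\ge x'$, and this is exactly where the factor $2$ in the paper's final bound originates. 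You instead locate the actual minimizer $y_0'\in[\delta_n,\hat x]$ of $f_n$, read off $f(y_0')\le f_n(y_0')+\gamma_n\le z+\gamma_n$, and control $x_\ast-y_0'$ directly by the mean value theorem; this avoids the need for a strict inequality at a pre-chosen test point and yields $|\hat x-f^{-1}(z)|\le\gamma_n/c$ rather than $2\gamma_n/c$, where $c=\inf_{0<x\le f^{-1}(a/2)}|f'(x)|$ — a factor-of-two improvement, which of course still implies the stated inequality. One small remark: the special case $\hat x=\delta_n$ that you rule out needs no separate treatment, since the argument with $y_0'=\delta_n$ works verbatim, but including it is harmless.
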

\begin{proof}
Set
$$F_n(x)\gl\inf_{\delta_n\leq y\leq x}f_n(y),\quad h_n(z)\gl\mathcal{I}(f_n,\delta_n)(z),\quad x\geq\delta_n,\quad z>0.$$
Note that $F_n\,:\,[\delta_n,\infty)\to\mathds{R}_+$ is a decreasing, continuous function with $F_n(x)\to 0$, $x\to\infty$ for each $n\in\mathds{N}$. First, we show the inequality
\begin{equation} \label{FIneq}
\sup_{x\geq \delta_n} |f(x)-F_n(x)| \leq \gamma_n,\quad n\in\mathds{N}.
\end{equation}
Note for this
$$F_n(x)=f_n(c_x)\leq f_n(x),\quad \delta_n\leq c_x\leq x$$
and
\begin{eqnarray*}
f(x) - F_n(x) &=& f(x)-f_n(c_x) \leq f(c_x)-f_n(c_x) \leq \gamma_n, \\
f(x) - F_n(x) &\geq& f(x) - f_n(x) \geq -\gamma_n
\end{eqnarray*}
for all $n\in\mathds{N}$. Next, fix any $0<a\leq z\leq b<\Lambda$ and $n\in\mathds{N}$ with
$2\gamma_n<a \wedge (\Lambda-b)$, $\delta_n < f^{-1}(b+2\gamma_n)$. Set
$$y\gl z-\gamma_n > \frac{a}{2},\quad y'\gl z +2\gamma_n<\Lambda$$
and
$$x\gl f^{-1}(y)\geq x'\gl f^{-1}(y')\geq f^{-1}(b+2\gamma_n)\geq \delta_n.$$
(\ref{FIneq}) implies $F_n(x)\leq f(x)+\gamma_n$ which yields, since $h_n$ is the pseudo-inverse of $F_n$,
$$h_n(z) = h_n(f(x)+\gamma_n) \leq x = f^{-1}(z-\gamma_n).$$
Equally, we have $F_n(x') \geq f(x') - \gamma_n > f(x')-2\gamma_n$ which implies
$$h_n(z) = h_n(f(x')-2\gamma_n)\geq x' = f^{-1}(z+2\gamma_n),$$
so that, altogether we have
$$f^{-1}(z+2\gamma_n)\leq h_n(z) \leq f^{-1}(z-\gamma_n).$$
Using the mean value theorem, this yields, on the one hand,
$$h_n(z)-f^{-1}(z) \leq f^{-1}(z-\gamma_n)-f^{-1}(z) = -\gamma_n(f^{-1})'(\xi_1)$$
and on the other hand
$$f^{-1}(z)-h_n(z)\leq f^{-1}(z)-f^{-1}(z+2\gamma_n) = -2\gamma_n(f^{-1})'(\xi_2)$$
with
$$\xi_1,\xi_2\in [z-\gamma_n,z+2\gamma_n]\subseteq \left[\frac{a}{2},\Lambda\right).$$
Thus, we finally obtain
$$|h_n(z)-f^{-1}(z)|\leq 2\gamma_n\sup_{\frac{a}{2}\leq y<\Lambda}|(f^{-1})'(y)| = 2\gamma_n\left(\inf_{0 < x\leq f^{-1}\left(\frac{a}{2}\right)} |f'(x)|\right)^{-1}.$$
\end{proof}

Next, we state a stochastic version of Proposition \ref{invProp}, which is adapted to our later needs.

\begin{cor} \label{LevCopInvCor}
Given a probability space $(\Omega,\mathcal{F},P)$ and a family of functions
$$\widehat{Z}_n\,:\,\Omega\to \widehat{\mathcal{C}},\quad n\in\mathds{N},$$
such that $\omega\mapsto[\widehat{Z}_n(\omega)](x)$ is $\mathcal{F}$-measurable for every $n\in\mathds{N}$, $x>0$ and such that
\begin{equation} \label{invStochLemEq}
\sup_{x\geq \delta_n} |\widehat{Z}_n(x) - f(x)| = O_P(\gamma_n),\quad n\to\infty
\end{equation}
holds with a function $f$ as in Proposition \ref{invProp}.
Then it also holds for any fixed $0<a<b<\Lambda$
$$\sup_{a\leq z\leq b} |\mathcal{I}(\widehat{Z}_n,\delta_n)(z)-f^{-1}(z)| = O_P(\gamma_n),\quad n\to\infty.$$
\end{cor}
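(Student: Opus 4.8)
The plan is to reduce the corollary to the deterministic Proposition \ref{invProp} by unwinding the definition of $O_P$. Fix $0<a<b<\Lambda$ throughout, write $Y_n\gl\sup_{a\leq z\leq b}|\mathcal{I}(\widehat{Z}_n,\delta_n)(z)-f^{-1}(z)|$, and set $L\gl(\inf_{0<x\leq f^{-1}(a/2)}|f'(x)|)^{-1}$, the constant appearing in the bound of Proposition \ref{invProp}; here $L\in(0,\infty)$ because $\inf_{0<x\leq1}|f'|>0$ by hypothesis, while $|f'|$, being continuous and strictly positive, is bounded away from $0$ on the compact interval $[1,f^{-1}(a/2)]$ when the latter is non-empty. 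Also note that each $Y_n$ is everywhere finite, since $\mathcal{I}(\widehat{Z}_n(\omega),\delta_n)\in\mathcal{D}_{\delta_n}$ is real-valued and decreasing, hence bounded on $[a,b]$.

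Now fix an arbitrary $\delta>0$. By (\ref{invStochLemEq}) there is a $K>0$ with $\sup_{n\in\mathds{N}}P(\Omega_n^c)\leq\delta$, where
$$\Omega_n\gl\Big\{\sup_{x\geq\delta_n}|\widehat{Z}_n(x)-f(x)|<K\gamma_n\Big\},\quad n\in\mathds{N};$$
these events are measurable since, $\widehat{Z}_n(\omega)$ being continuous, the supremum is a countable one of $\mathcal{F}$-measurable maps, and the same remark makes each $Y_n$ measurable. Because $\gamma_n\to0$ and $\delta_n\downarrow0$, choose $n_0$ so that $2K\gamma_n<a\wedge(\Lambda-b)$ and $\delta_n<f^{-1}(b+2K\gamma_n)$ hold for all $n\geq n_0$: the first is immediate from $\gamma_n\to0$, and for the second note that $b+2K\gamma_n$ then lies in a fixed compact subinterval of $(b,\Lambda)$ on which $f^{-1}$ is bounded below by some $c>0$, while $\delta_n\downarrow0$.

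For $n\geq n_0$ and $\omega\in\Omega_n$ I would apply Proposition \ref{invProp} at index $n$ to the deterministic function $\widehat{Z}_n(\omega)\in\widehat{\mathcal{C}}$ with the role of $\gamma_n$ there played by $K\gamma_n$ — legitimate, since the conclusion of that proposition at a fixed index involves only that index's data, and on $\Omega_n$ the hypotheses $\sup_{x\geq\delta_n}|\widehat{Z}_n(\omega)(x)-f(x)|\leq K\gamma_n$, $2(K\gamma_n)<a\wedge(\Lambda-b)$ and $\delta_n<f^{-1}(b+2K\gamma_n)$ all hold. This yields $Y_n(\omega)\leq 2KL\gamma_n$, whence $\{Y_n/\gamma_n\geq 2KL+1\}\subseteq\Omega_n^c$ and therefore $P(Y_n/\gamma_n\geq 2KL+1)\leq\delta$ for all $n\geq n_0$. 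For the finitely many remaining indices $n<n_0$, the everywhere-finiteness of $Y_n$ gives $P(Y_n/\gamma_n\geq M)\to0$ as $M\to\infty$, so one may pick $M_n$ with $P(Y_n/\gamma_n\geq M_n)\leq\delta$; with $\widetilde{K}\gl\max\{2KL+1,M_0,\dots,M_{n_0-1}\}$ we get $\sup_{n\in\mathds{N}}P(Y_n/\gamma_n\geq\widetilde{K})\leq\delta$. As $\delta>0$ was arbitrary, this is precisely $Y_n=O_P(\gamma_n)$.

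The argument is essentially a translation of Proposition \ref{invProp} into the language of $O_P$, so there is no deep obstacle; the only points that need care are the ones isolated above — verifying $L<\infty$, checking that the side conditions of Proposition \ref{invProp} survive after replacing $\gamma_n$ by $K\gamma_n$ for all large $n$, and the fact that the definition of $O_P$ quantifies over all $n$ (so the finitely many initial indices must be absorbed separately, using the everywhere-finiteness of $Y_n$), together with the routine measurability remark.
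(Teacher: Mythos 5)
Your argument is essentially the paper's: condition on the event where the approximation error is controlled, apply Proposition \ref{invProp} pointwise there, and note the complement is asymptotically negligible. The only differences are cosmetic — the paper writes the $O_P$ bound as $\gamma_n X_n$ with $X_n$ uniformly bounded in probability and defines the good event $A_n$ in terms of $X_n$, whereas you unwind the $O_P$ quantifiers with a deterministic threshold $K\gamma_n$; you also spell out the absorption of finitely many initial indices and the measurability remark, which the paper leaves implicit.
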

\begin{proof}
Write $(\gamma_n X_n)_{n\in\mathds{N}}$ instead of $O_P(\gamma_n)$ in (\ref{invStochLemEq}), i.e. $(X_n)_n$ is a family of random variables, which are uniformly bounded in probability. Set furthermore 
$$A_n\gl \left\{2\gamma_n X_n < a\wedge (\Lambda-b),\quad\delta_n < f^{-1}(b+2\gamma_n X_n)\right\},\quad n\in\mathds{N}.$$ 
Then, Proposition \ref{invProp} states that we have for $\omega\in A_n$
$$\sup_{a\leq z\leq b} |\mathcal{I}(\widehat{Z}_n(\omega),\delta_n)(z)-f^{-1}(z)| \leq 2\gamma_n X_n(\omega)\left(\inf_{0< x\leq f^{-1}\left(\frac{a}{2}\right)} |f'(x)|\right)^{-1}\lesssim \gamma_n X_n(\omega).$$
This proves this Corollary since $P(A_n^c)\to 0$ for $n\to\infty$.
\end{proof}

Finally, we combine the statements developed so far and get the following main result:

\begin{theorem} \label{LevyGeneralTheorem}
Let the Assumptions \ref{kernelAssum} and \ref{LevyAssum} hold and $0<a<b<\infty$ be two fixed numbers. Set $\delta_n\gl (\log\log n)^{-1}$ and
$$\widehat{U}_{1,n}^{-1} \gl \mathcal{I}(\Re_+\widehat{N}_n(\cdot,0),\delta_n),\quad \widehat{U}_{2,n}^{-1} \gl \mathcal{I} (\Re_+\widehat{N}_n(0,\cdot),\delta_n),\quad n\in\mathds{N}.$$
Then, it holds with the plug-in estimator
$$\widehat{\mathfrak{C}}_n(u,v)=\widehat{N}_n(\widehat{U}_{1,n}^{-1}(u),\widehat{U}_{2,n}^{-1}(v)),\quad u,v>0$$
the asymptotic
$$\sup_{a\leq u,v\leq b}|\mathfrak{C}(u,v)-\widehat{\mathfrak{C}}_n(u,v)| = O_{P_{\Sigma,\nu,\alpha}}\left(\frac{(\log\log n)^9}{\sqrt{\log n}}\right),\quad n\to\infty.$$
\end{theorem}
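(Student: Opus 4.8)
The plan is to decompose the copula error into an \emph{inversion--stability} part and a \emph{direct plug--in} part. Write $p=U_1^{-1}(u)$, $q=U_2^{-1}(v)$, $\widehat p=\widehat{U}_{1,n}^{-1}(u)$, $\widehat q=\widehat{U}_{2,n}^{-1}(v)$, so that $\mathfrak{C}(u,v)=U(p,q)$ by (\ref{introLevCopEq}) and $\widehat{\mathfrak{C}}_n(u,v)=\widehat{N}_n(\widehat p,\widehat q)$. I would estimate
\begin{equation*}
|\mathfrak{C}(u,v)-\widehat{\mathfrak{C}}_n(u,v)|\;\le\;\underbrace{|U(p,q)-U(\widehat p,\widehat q)|}_{=:(A)}\;+\;\underbrace{|U(\widehat p,\widehat q)-\widehat{N}_n(\widehat p,\widehat q)|}_{=:(B)}
\end{equation*}
and bound $(A)$ and $(B)$ separately, uniformly over $a\le u,v\le b$. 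Here $(B)$ is the estimation error of $\widehat{N}_n$ from Theorem \ref{LevyEstTh} evaluated at the random point $(\widehat p,\widehat q)$, and $(A)$ measures how much $U$ moves when its marginal pseudo-inverses are perturbed.

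For the inversion I would first turn the \emph{weighted} bound of Theorem \ref{LevyEstTh} into an \emph{unweighted} one on $[\delta_n,\infty)$. Since $|(x,0)|^2\wedge|(x,0)|^4\ge\delta_n^4$ for every $x\ge\delta_n=(\log\log n)^{-1}$ (once $\delta_n<1$), and since $U_1(x)=U(x,0)\ge0$ together with $|\Re_+ z-y|\le|z-y|$ for real $y\ge0$, Theorem \ref{LevyEstTh} gives
\begin{equation*}
\sup_{x\ge\delta_n}\bigl|\Re_+\widehat{N}_n(x,0)-U_1(x)\bigr|=O_{P}\!\bigl(\delta_n^{-4}(\log\log n)^2/\sqrt{\log n}\bigr)=O_P(\gamma_n),\qquad\gamma_n:=\frac{(\log\log n)^6}{\sqrt{\log n}},
\end{equation*}
and the same for $U_2$ with $\widehat{N}_n(0,\cdot)$; with $R(x):=x^{-4}$ and $\epsilon_n:=(\log\log n)^2/\sqrt{\log n}$ this is exactly the shape $\gamma_n=R(\delta_n)\epsilon_n\to0$ prepared before Proposition \ref{invProp}. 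Assumption \ref{LevyAssum}(iv) makes $U_1,U_2$ admissible $f$'s in Proposition \ref{invProp} with $\Lambda=\infty$, and $\Re_+\widehat{N}_n(\cdot,0),\Re_+\widehat{N}_n(0,\cdot)\in\widehat{\mathcal{C}}$ (continuity and vanishing at infinity follow from the construction of $\widehat{N}_n$, and $\widehat{N}_n\equiv0$ off $A_{h_n,n}$, where $0\in\widehat{\mathcal{C}}$). Hence Corollary \ref{LevCopInvCor} applies and yields, for $k=1,2$, $\sup_{a\le z\le b}|\widehat{U}_{k,n}^{-1}(z)-U_k^{-1}(z)|=O_P(\gamma_n)$. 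Consequently there is an event $E_n$ with $P(E_n)\to1$ on which $\delta_n\le\widehat p,\widehat q\le C$ uniformly in $u,v\in[a,b]$, with $C$ depending only on $a$ (the lower bound is definitional for $\mathcal{I}(\cdot,\delta_n)$, the upper bound because $U_k^{-1}$ is bounded on $[a,b]$).

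Term $(B)$ is then immediate: on $E_n$ the point $(\widehat p,\widehat q)$ lies in $[\delta_n,C]^2$, so $|(\widehat p,\widehat q)|^2\wedge|(\widehat p,\widehat q)|^4\gtrsim\delta_n^4$, and Theorem \ref{LevyEstTh} gives $(B)\lesssim\delta_n^{-4}O_P((\log\log n)^2/\sqrt{\log n})=O_P(\gamma_n)$, uniformly in $u,v$. For $(A)$ I would use the elementary domination of the increments of $U$ by those of the marginals: for $0<x\le x'$, $y\ge0$,
\begin{equation*}
0\le U(x,y)-U(x',y)=\nu([x,x')\times[y,\infty))\le\nu([x,x')\times\mathds{R}_+)=U_1(x)-U_1(x'),
\end{equation*}
and symmetrically in the second argument. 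Inserting the intermediate point $(\widehat p,q)$ bounds $(A)$ by $|U_1(p)-U_1(\widehat p)|+|U_2(q)-U_2(\widehat q)|$. By the mean value theorem $|U_1(p)-U_1(\widehat p)|=|U_1'(\xi)|\,|p-\widehat p|$ with $\xi$ between $p$ and $\widehat p$, hence $\xi\ge\delta_n$ on $E_n$, so Assumption \ref{LevyAssum}(iv) gives $|U_1'(\xi)|\lesssim1\vee\xi^{-3}\lesssim\delta_n^{-3}=(\log\log n)^3$. Combined with the inversion bound this produces $(A)\lesssim(\log\log n)^3O_P(\gamma_n)=O_P((\log\log n)^9/\sqrt{\log n})$, uniformly in $u,v\in[a,b]$, which dominates $(B)$; adding the two bounds and using $P(E_n^c)\to0$ finishes the proof.

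The step I expect to be the main obstacle is the bookkeeping of the powers of $\log\log n$ through the inversion: the offset $\delta_n$ must tend to $0$ slowly enough that neither the factor $\delta_n^{-4}$ incurred when passing from the weighted estimate of Theorem \ref{LevyEstTh} to a uniform one on $[\delta_n,\infty)$, nor the factor $\delta_n^{-3}$ arising from the blow-up of $U_k'$ at the origin (Assumption \ref{LevyAssum}(iv)), destroys the basic rate $(\log\log n)^2/\sqrt{\log n}$ --- the choice $\delta_n=(\log\log n)^{-1}$ is what converts this into the stated $(\log\log n)^9/\sqrt{\log n}$. The accompanying points --- that $\Re_+\widehat{N}_n(\cdot,0)$ genuinely lies in $\widehat{\mathcal{C}}$, and the increment-domination inequality for $U$ (which avoids assuming that $\nu$ has a density) --- are comparatively routine.
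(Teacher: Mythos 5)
Your proof is correct and follows essentially the same route as the paper's: the decomposition into $(A)=|U(p,q)-U(\widehat p,\widehat q)|$ and $(B)=|U(\widehat p,\widehat q)-\widehat{N}_n(\widehat p,\widehat q)|$, the passage from the weighted bound of Theorem \ref{LevyEstTh} to the uniform bound on $[\delta_n,\infty)$, the use of Corollary \ref{LevCopInvCor} with $R(x)=x^{-4}$ and $\gamma_n=(\log\log n)^6/\sqrt{\log n}$, and the bookkeeping of the $\delta_n^{-3}$ factor from Assumption \ref{LevyAssum}(iv) all match the paper step for step. The only cosmetic difference is that the paper bounds $(A)$ by citing the Lipschitz continuity of the L\'{e}vy copula $\mathfrak{C}$ (Kallsen--Tankov, Lemma 3.2) and then pulling back through $U_1,U_2$ with the mean value theorem, which, after substituting $u=U_1(p)$ and $u_n=U_1(\widehat p)$, is exactly the increment-domination inequality $|U(p,q)-U(\widehat p,q)|\le|U_1(p)-U_1(\widehat p)|$ that you derive directly from the definition of $\nu$.
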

\begin{proof}
First, note that we can replace $\widehat{N}_n$ by $\Re_+ \widehat{N}_n$ and (\ref{NnEstEq}) is still valid. This is due to the fact that we have for all $c\in\mathds{C}$ and $r\in\mathds{R}_+$ the inequality
$$|c-r|=\sqrt{(\Re(c-r))^2+(\Im(c-r))^2}\geq |\Re(c-r)| = |\Re(c)-r|\geq |\Re_+(c)-r|.$$
Observe furthermore
$$\Re_+\widehat{N}_n(\cdot,0),\,\Re_+\widehat{N}_n(0,\cdot)\in\widehat{\mathcal{C}},\quad n\in\mathds{N}.$$
Set
$$\epsilon_n\gl \frac{(\log \log n)^2}{\sqrt{\log n}}, \quad R(x)\gl x^{-4},\quad x>0$$
and note that
$$\gamma_n = R(\delta_n)\epsilon_n = \frac{(\log\log n)^6}{\sqrt{\log n}} \to 0,\quad n\to\infty.$$
Theorem \ref{LevyEstTh} states
$$\sup_{x\geq \delta_n} |\Re_+\widehat{N}_n(x,0) - U_1(x)| = O_{P_{\Sigma,\nu,\alpha}}(\gamma_n),\quad n\to\infty,$$
so that Corollary \ref{LevCopInvCor} implies with $\Lambda\gl \infty$, $\widehat{Z}_n\gl \Re_+\widehat{N}_n(\cdot,0)$ and Assumption \ref{LevyAssum} (iv)
\begin{equation} \label{U1InvEq}
\sup_{a\leq u\leq b} |\widehat{U}_{1,n}^{-1}(u)-U_1^{-1}(u)| = O_{P_{\Sigma,\nu,\alpha}}(\gamma_n),\quad n\to\infty.
\end{equation}
Of course, exactly the same considerations yield the $U_2$ analogue of (\ref{U1InvEq}). Next, write
$$u_n\gl U_1\circ \widehat{U}_{1,n}^{-1}(u),\quad v_n\gl U_2\circ \widehat{U}_{2,n}^{-1}(v),\quad a\leq u,v\leq b$$
and note that the L\'{e}vy-copula $\mathfrak{C}$ is Lipschitz continuous, cf. Kallsen, Tankov \cite{KalTan}[Lemma 3.2]. More precise, we have
\begin{equation} \label{LevCopEq1}
|\mathfrak{C}(u,v)-\mathfrak{C}(u',v')|\leq |u-u'|+|v-v'|,\quad u,u',v,v'>0.
\end{equation}
Together with the mean value theorem and $\widehat{U}_{1,n}^{-1}\in \mathcal{D}_{\delta_n}$ we have for $a\leq u\leq b$
$$|u-u_n| = |U_1\circ U_1^{-1}(u)-U_1\circ \widehat{U}_{1,n}^{-1}(u)| = |U_1^{-1}(u)-\widehat{U}_{1,n}^{-1}(u)||U_1'(x)|,\quad x\in [U_1^{-1}(b)\wedge \delta_n,\infty).$$
Thus, (\ref{U1InvEq}) yields together with Assumption \ref{LevyAssum} (iv)
\begin{equation} \label{LevCopEq2}
\sup_{a\leq u\leq b} |u-u_n| = \delta_n^{-3} O_{P_{\Sigma,\nu,\alpha}}(\gamma_n),\quad n\to\infty.
\end{equation}
Hence, (\ref{LevCopEq1}) and (\ref{LevCopEq2}) imply
\begin{equation} \label{LevCopEq3}
\sup_{a\leq u,v\leq b} |\mathfrak{C}(u,v)-\mathfrak{C}(u_n,v_n)| \leq \sup_{a\leq u\leq b} |u-u_n| + \sup_{a\leq v\leq b} |v-v_n| = \delta_n^{-3} O_{P_{\Sigma,\nu,\alpha}}(\gamma_n),\quad n\to\infty.
\end{equation}
Theorem \ref{LevyEstTh} yields because of $\widehat{U}_{j,n}^{-1}\in\mathcal{D}_{\delta_n}$, $j=1,2$ the asymptotic
\begin{eqnarray}
\sup_{a\leq u,v\leq b}|\mathfrak{C}(u_n,v_n)-\widehat{\mathfrak{C}}_n(u,v)| &=& \sup_{a\leq u,v\leq b}|U(\widehat{U}_{1,n}^{-1}(u),\widehat{U}_{2,n}^{-1}(v))-\widehat{N}_n(\widehat{U}_{1,n}^{-1}(u),\widehat{U}_{2,n}^{-1}(v))|  \nonumber\\&=&
O_{P_{\Sigma,\nu,\alpha}}(\gamma_n),\quad n\to\infty. \label{LevCopEq4}
\end{eqnarray}
Finally, (\ref{LevCopEq3}) and (\ref{LevCopEq4}) prove this theorem.
\end{proof}

\section{The Compound Poisson Process (CPP) Case} \label{CPPSec}
Note that we do not use the special shape of the weight function
$$w(u) = (\log(e+|u|))^{-\frac{1}{2}-\delta},\quad \delta>0,\quad u\in \mathds{R}^2$$
in the previous Section \ref{LevCopSec}. Neither do we use the convergence rate $\sqrt{n}$ obtained in Theorem \ref{Eboundth}. In fact, the proofs in the previous section also work if we merely had
$$e^{-C(1+|u|)^2} \lesssim w(u),\quad u\in\mathds{R}^2$$
for some constant $C>0$ and, concerning Theorem \ref{Eboundth},
$$n^{-\frac{1}{2}+\epsilon}\sup_{n\geq 1} E_{\Sigma,\nu,\alpha} \left\|\frac{\partial^l}{\partial u_k^l} A_n(u)\right\|_{L^\infty(w)} < \infty,\quad k=1,2,\quad 0\leq l\leq 4$$
for some $\epsilon>0$. This is due to the fast decay behavior of $\varphi_{\Sigma,\nu,\alpha}$ if $\Sigma\neq 0$, cf. Lemma \ref{InfDecayRateLem}. Therefore we cannot derive any benefit from these stronger results. However, if $\varphi_{\Sigma,\nu,\alpha}$ decays more slowly, we can benefit from these stronger results as we shall demonstrate in the case of a compound Poisson process with drift. This is, in some sense, the complementary case of the one we investigated in the previous section.

\begin{assum} \label{CompoundAssum}
We state here the assumptions concerning $\nu$ in the compound Poisson case:
\begin{itemize}
\item[(i)]
The corresponding L\'{e}vy process is a compound Poisson process with intensity $0<\Lambda<\infty$ and has only positive jumps, i.e.
$$0<\Lambda\gl\nu(\mathds{R}^2)=\nu(\mathds{R}_+^2)<\infty,$$ 
\item[(ii)]
$\exists \gamma> 0\,:\,\int |x|^{8+\gamma}\,\nu(dx) < \infty, \quad \text{i.e. finite 8+$\gamma$-th moment},$
\item[(iii)]
$\mathcal{F}((x_1^4+x_2^4)\nu)(u) \lesssim (1+|u_1|)^{-1}(1+|u_2|)^{-1},\quad u\in\mathds{R}^2,$
\item[(iv)]
$U_k\,:\, (0,\infty) \to (0,\Lambda)$ is a $\mathcal{C}^1$-bijection with $U_k'<0$ and
$$\inf_{0<x_k\leq 1}|U_k'(x_k)|>0,\quad \sup_{x_k>0} (1\wedge x_k) |U_k'(x_k)| < \infty,\quad k=1,2.$$
\end{itemize}
\end{assum}

\begin{prop} \label{CPPAssumProp}
Let $f\,:\,\mathds{R}_+^2\to\mathds{R}_+$ be a continuous function with the properties
\begin{itemize}
\item[(i)]
$f(x)>0,\quad x\in(\{0\}\times\mathds{R}_+^\ast)\times(\mathds{R}_+^\ast\times\{0\})$,
\item[(ii)]
$f(x) \lesssim |x|^2(\log|x|)^{-2},\quad x\in\mathds{R}_+^2\,:\,|x|\leq\frac{1}{2}$,
\item[(iii)]
$f(x)\lesssim (1+|x|)^{-(6+\epsilon)},\quad x\in \mathds{R}_+^2$
\end{itemize}
for some $\epsilon>0$. Then 
$$\nu(dx)\gl \one_{\mathfrak{R}}(x)(x_1^4+x_2^4)^{-1}f(x) \lambda^2(dx)$$
is a L\'{e}vy measure and fulfills the Assumptions \ref{CompoundAssum} (i), (ii) and (iv).
\end{prop}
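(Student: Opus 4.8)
The plan is to follow the proof of Proposition \ref{BGProp} almost line by line, since the structure is identical; the only genuinely new point is a sharper decay estimate for $U_k'$ near the origin, which is where the extra factor $|x|^2$ in hypothesis (ii) enters.

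First I would record $x_1^4+x_2^4\asymp|x|^4$ and check that $\nu$ is a finite, hence Lévy, measure: splitting $\mathds{R}_+^2$ into $\{|x|\le 1/2\}$ and $\{|x|>1/2\}$ and passing to polar coordinates, hypothesis (ii) makes the integrand on the first region $\lesssim r^{-1}(\log r)^{-2}$, which is integrable near $0$, while (iii) makes it $\lesssim r^{-3}(1+r)^{-(6+\epsilon)}$ on the second, which is integrable at $\infty$; positivity of $\nu$ follows from $f(\cdot,0)>0$ on $(0,\infty)$ together with the continuity of $f$. Since $\nu$ is carried by $\mathds{R}_+^2$ by construction, this is exactly Assumption \ref{CompoundAssum}(i) with $\Lambda=\nu(\mathds{R}^2)$, and a finite Lévy measure corresponds to a compound Poisson process. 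For (ii) I would estimate $\int|x|^{8+\gamma}\,\nu(dx)\asymp\int|x|^{4+\gamma}f(x)\,\lambda^2(dx)$, which is finite near $0$ trivially and finite at $\infty$ by (iii) provided $\gamma<\epsilon$; the choice $\gamma:=\epsilon/2$ together with Sato \cite{Sat}[Theorem 25.3] then gives the finite $(8+\gamma)$-th moment of the corresponding process.

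The bulk of the work is Assumption \ref{CompoundAssum}(iv), treated for $U_1$ (the case of $U_2$ being symmetric). As in Proposition \ref{BGProp}, $\lim_{x_1\uparrow\infty}U_1(x_1)=0$ and, because $\nu(\{0\}\times\mathds{R}_+)=0$, also $\lim_{x_1\downarrow 0}U_1(x_1)=\nu(\mathds{R}_+^2)=\Lambda$; differentiating under the integral sign yields $U_1'(x_1)=-\int_0^\infty(x_1^4+y_2^4)^{-1}f(x_1,y_2)\,dy_2$, which is continuous on $(0,\infty)$ by dominated convergence on compact subintervals and strictly negative since $f(x_1,0)>0$ for $x_1>0$. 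Together with continuity this exhibits $U_1$ as a $\mathcal{C}^1$-bijection of $(0,\infty)$ onto $(0,\Lambda)$. For the lower bound, Fatou's lemma gives $\liminf_{x_1\to 0}|U_1'(x_1)|\ge\int_0^\infty y_2^{-4}f(0,y_2)\,dy_2>0$, and combining this with the continuity and strict positivity of $|U_1'|$ on $(0,\infty)$ yields $\inf_{0<x_1\le 1}|U_1'(x_1)|>0$.

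The one step that is more than a copy of Proposition \ref{BGProp} is the upper bound $\sup_{x_1>0}(1\wedge x_1)|U_1'(x_1)|<\infty$, where the permitted blow-up as $x_1\downarrow 0$ is now only $x_1^{-1}$ rather than $x_1^{-3}$, and I expect this to be the main obstacle because it is exactly where the extra $|x|^2$ in (ii) must be used. For $x_1$ bounded away from $0$ the crude bound $|U_1'(x_1)|\le\|f\|_\infty\int_0^\infty(x_1^4+y_2^4)^{-1}\,dy_2\lesssim x_1^{-3}$ already suffices. For small $x_1$ I would split the $y_2$-integral at a fixed small constant: on the far part the integrand is $\lesssim(1+y_2)^{-(6+\epsilon)}$ and contributes an $O(1)$ term, while on the near part (ii), with $(\log|\cdot|)^{-2}$ bounded there, gives $f(x_1,y_2)\lesssim x_1^2+y_2^2$, and the scaling substitution $y_2=x_1 t$ turns $\int(x_1^4+y_2^4)^{-1}(x_1^2+y_2^2)\,dy_2$ into $x_1^{-1}\int\frac{1+t^2}{1+t^4}\,dt\lesssim x_1^{-1}$, the $t$-integral being convergent. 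Hence $|U_1'(x_1)|\lesssim x_1^{-1}$ near $0$, which is the bound required for Assumption \ref{CompoundAssum}(iv); the same computations with the coordinates exchanged close the argument for $U_2$.
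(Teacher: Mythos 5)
Your proposal is correct and follows essentially the same route as the paper: it inherits the structure of Proposition~\ref{BGProp} and isolates the sharper bound $|U_1'(x_1)|\lesssim x_1^{-1}$ near zero as the genuinely new step, which is exactly where the paper also deviates. The only cosmetic difference is that you obtain this bound via $f(x_1,y_2)\lesssim x_1^2+y_2^2$ followed by the scaling substitution $y_2=x_1 t$, whereas the paper folds the same information about $f$ into the one-line estimate $(x_1^4+y_2^4)^{-1}f(x_1,y_2)\lesssim(x_1+y_2)^{-2}$ and integrates directly; the two computations are equivalent.
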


\begin{proof}
We only highlight the deviations from the proof in Proposition \ref{BGProp}:
\begin{itemize}
\item[(i)]
We have
$$0<\Lambda = \nu(\mathds{R}_+^2) \lesssim \int_{\mathds{R}_+^2} |x|^{-4} f(x)\,\lambda^2(dx) \lesssim \int_0^{\frac{1}{2}} r^{-4} r^2 (\log r)^{-2} r\,dr + \int_{\frac{1}{2}}^\infty r^{-4} r\,dr < \infty.$$
\item[(iv)]
Observe first
$$[x_1,\infty)\times\mathds{R}_+ \uparrow \mathds{R}_+^\ast\times\mathds{R}_+,\quad x_1\downarrow 0,$$
so that we obtain
$$U_1(x_1) = \nu([x_1,\infty)\times\mathds{R}_+) \to \nu(\mathds{R}_+^\ast\times\mathds{R}_+) = \nu(\mathds{R}_+^2) = \Lambda.$$
This yields that $U_1\,:\,(0,\infty)\to (0,\Lambda)$ is a surjection. Compare for this the argumentation in Proposition \ref{BGProp}. Finally, note that it holds for $x_1>0$
$$|U_1'(x_1)|=\int_0^\infty (x_1^4+y_2^4)^{-1} f(x_1,y_2)\,dy_2 \lesssim \int_0^1(x_1+y_2)^{-2}\,dy_2 + \int_1^\infty y_2^{-4}\,dy_2 = \frac{1}{x_1} - \frac{1}{x_1+1} + \frac{1}{3}$$
which implies
$$\sup_{x_1>0} (1\wedge x_1) |U_1'(x_1)| <\infty.$$
\end{itemize}
\end{proof}

\begin{cor}
It exists a L\'{e}vy measure $\nu_0$ that fulfills the Assumptions \ref{CompoundAssum} with the property
$$\int_{\mathds{R}^2} |x|^{-\epsilon} \nu_0(dx) = \infty$$
for all $\epsilon>0$.
\end{cor}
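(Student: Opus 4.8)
The plan is to mirror the construction in Step~2 of the proof of Corollary~\ref{BGCor}, but with the radial profile shrunk by a factor $r^2=|x|^2$ so that hypothesis (ii) of Proposition~\ref{CPPAssumProp} is met with equality near the origin. Concretely, let $\phi$ be the $\mathcal{C}^\infty$ cut-off from that step, i.e.\ $\phi(x)=1$ for $|x|\leq\frac12$ and $\phi(x)=0$ for $|x|>\frac34$ (cf.\ Rudin~\cite{Rud}[\S 1.46]), and set
$$f_0(x)\gl \phi(r)r^2(\log r)^{-2} + (1-\phi(r))e^{-r},\quad r=|x|,\quad x\in\mathfrak{R},$$
with $f_0(0)\gl 0$, together with
$$\nu_0(dx)\gl \one_{\mathfrak{R}}(x)(x_1^4+x_2^4)^{-1}f_0(x)\,\lambda^2(dx).$$
Since $f_0$ is continuous, strictly positive on the positive axes, satisfies $f_0(x)=r^2(\log r)^{-2}$ for $|x|\leq\frac12$ and $f_0(x)=e^{-r}$ for $|x|\geq\frac34$, it fulfills (i)--(iii) of Proposition~\ref{CPPAssumProp}; hence $\nu_0$ is a (necessarily finite, i.e.\ CPP) L\'evy measure meeting Assumptions~\ref{CompoundAssum}~(i), (ii) and (iv).

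To obtain Assumption~\ref{CompoundAssum}~(iii) I would repeat verbatim the argument of Step~2 of the proof of Corollary~\ref{BGCor}. Differentiating $f_0$ gives, for $0<r<\frac12$, the bounds
$$\Bigl|\frac{\partial f_0}{\partial x_k}\Bigr|(x)\lesssim r(\log r)^{-2},\qquad \Bigl|\frac{\partial^2 f_0}{\partial x_1\partial x_2}\Bigr|(x)\lesssim (\log r)^{-2},$$
while for $r>\frac34$ both quantities are $\lesssim e^{-r}$, and on the compact transition annulus $\frac12\leq r\leq\frac34$ everything is trivially bounded. Applying Proposition~\ref{FourierProp} to $f_0\cdot\one_{[\delta,\infty)^2}$ and letting $\delta\downarrow 0$, then invoking Lemma~\ref{trivLevLem}~(i) and the continuity of $u\mapsto\mathcal{F}((x_1^4+x_2^4)\nu_0)(u)$, yields
$$\mathcal{F}((x_1^4+x_2^4)\nu_0)(u)\lesssim (1+|u_1|)^{-1}(1+|u_2|)^{-1},\quad u\in\mathds{R}^2,$$
which is exactly Assumption~\ref{CompoundAssum}~(iii).

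For the divergence property, fix $\epsilon>0$. Restricting the integral to the quarter disc $\{x\in\mathds{R}_+^2\,:\,|x|\leq\frac12\}$, where $f_0(x)=|x|^2(\log|x|)^{-2}$ and $x_1^4+x_2^4\leq |x|^4$, and passing to polar coordinates, I get
$$\int_{\mathds{R}^2}|x|^{-\epsilon}\,\nu_0(dx)\geq \int_0^{\pi/2}\!\!\int_0^{1/2} r^{-\epsilon}\cdot r^{-4}\cdot r^2(\log r)^{-2}\cdot r\,dr\,d\theta = \frac{\pi}{2}\int_0^{1/2} r^{-1-\epsilon}(\log r)^{-2}\,dr.$$
The substitution $r=e^{-s}$ turns the last integral into $\frac{\pi}{2}\int_{\log 2}^\infty e^{\epsilon s}s^{-2}\,ds=\infty$, since $e^{\epsilon s}s^{-2}\to\infty$ as $s\to\infty$. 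Hence $\int|x|^{-\epsilon}\nu_0(dx)=\infty$ for every $\epsilon>0$, as required.

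The only non-routine ingredient is the verification of Assumption~\ref{CompoundAssum}~(iii); but the measure $(x_1^4+x_2^4)\nu_0$ differs from its analogue in the $\beta=2$ case of Corollary~\ref{BGCor} only by the extra factor $r^2$ multiplying the $(\log r)^{-2}$ profile near the origin, which if anything \emph{improves} every bound, so this step is entirely parallel to what has already been carried out. The genuinely new point --- the divergence of $\int|x|^{-\epsilon}\nu_0(dx)$ --- reduces to the elementary one-dimensional computation displayed above, so I anticipate no real obstacle.
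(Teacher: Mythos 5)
Your proof is correct and follows essentially the same approach as the paper: you choose the same density $f_0(x)=\phi(r)r^2(\log r)^{-2}+(1-\phi(r))e^{-r}$ (the paper writes this as $\psi(r)(\log r)^{-2}+(1-\phi(r))e^{-r}$ with $\psi(r)=r^2\phi(r)$), invoke Proposition \ref{CPPAssumProp} for (i), (ii), (iv), verify (iii) by the same two-derivative bound and appeal to Proposition \ref{FourierProp}, and establish the divergence by the same radial integral (your substitution $r=e^{-s}$ replaces the paper's elementary bound $(\log r)^{-2}\gtrsim r^{\epsilon/2}$, a purely cosmetic difference).
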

\begin{proof}
We imitate the proof of step 2 in Corollary \ref{BGCor}. For this purpose, set
$$f_0(x)\gl \psi(r) (\log r)^{-2} + (1-\phi(r))e^{-r},\quad r\gl |x|,\quad x\in \mathfrak{R}$$
and
$$\nu_0(dx) \gl \one_{\mathfrak{R}}(x) (x_1^4+x_2^4)^{-1} f_0(x)\,\lambda^2(dx)$$
with $\phi$ as in the proof of Corollary \ref{BGCor} and
$$\psi(x)\gl |x|^2\phi(x) = r^2\phi(r),\quad x\in\mathfrak{R}.$$
Then Assumptions \ref{CompoundAssum} (i), (ii) and (iv) are fulfilled because of Proposition \ref{CPPAssumProp}. Furthermore, we have
$$\int_{\mathds{R}^2} |x|^{-\epsilon}\,\nu_0(dx) \geq \int_0^{\frac{1}{2}} r^{-\epsilon} r^{-4} r^2 (\log r)^{-2} r\,dr \gtrsim \int_0^{\frac{1}{2}} r^{-1-\frac{\epsilon}{2}}\,dr = \infty$$
for all $\epsilon>0$. Concerning Assumption \ref{CompoundAssum} (iii), note that it holds for $r\geq 0$
\begin{eqnarray*}
\psi'(r) &=& 2r\phi(r) + r^2\phi'(r), \\
\psi''(r) &=& 2\phi(r) + 4r\phi'(r) + r^2\phi''(r),
\end{eqnarray*}
i.e. $\|\psi\|_{\infty}<\infty$, $\|\psi'\|_{\infty}<\infty$ and $\|\psi''\|_{\infty}<\infty$. The remaining proof works exactly as step 2 in the proof of Corollary \ref{BGCor}.
\end{proof}

Note that Assumption \ref{CompoundAssum} (iv) implies that the one-dimensional compound Poisson coordinate processes also have the intensity $\Lambda$. Furthermore, we have the representation
$$\varphi_{\nu,\alpha}(u) = \exp\left(i\left<u,\alpha\right> + \int_{\mathds{R}^2} (e^{i\left<u,x\right>}-1)\,\nu(dx)\right),\quad u\in\mathds{R}^2$$
with a finite measure $\nu$ and $\alpha\in\mathds{R}^2$. Lemma \ref{InfDecayRateLem} now turns into the following statement:

\begin{lem} \label{CPPDecayLem}
Given a L\'{e}vy triplet $(\Sigma,\nu,\alpha)$ with $\Sigma=0$ and $\nu(\mathds{R}^2)<\infty$. Then it holds
$$\inf_{u\in\mathds{R}^2} |\varphi_{\nu,\alpha}(u)| > 0.$$
\end{lem}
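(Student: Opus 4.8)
The plan is to exploit the explicit exponential form of the characteristic function together with the finiteness of $\nu$. Since $\Sigma=0$ and $\nu$ is a finite measure, we have the representation
$$\varphi_{\nu,\alpha}(u) = \exp\left(i\left<u,\alpha\right> + \int_{\mathds{R}^2}\left(e^{i\left<u,x\right>}-1\right)\,\nu(dx)\right),\quad u\in\mathds{R}^2,$$
where the integral is well-defined because $|e^{i\left<u,x\right>}-1|\leq 2$ and $\nu(\mathds{R}^2)<\infty$. First I would take absolute values: writing $\varphi_{\nu,\alpha}(u)=\exp(\Psi(u))$ with $\Psi(u)=i\left<u,\alpha\right>+\int_{\mathds{R}^2}(e^{i\left<u,x\right>}-1)\,\nu(dx)$, one has $|\varphi_{\nu,\alpha}(u)|=\exp(\Re\Psi(u))$.

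Next I would compute the real part. The drift term $i\left<u,\alpha\right>$ is purely imaginary, hence contributes nothing, and by Fubini/dominated convergence (again using the bound $|e^{i\left<u,x\right>}-1|\leq 2$ and $\nu(\mathds{R}^2)<\infty$) the real part of the integral is
$$\Re\Psi(u) = \int_{\mathds{R}^2}\left(\cos\left<u,x\right>-1\right)\,\nu(dx).$$
Since $\cos t - 1 \geq -2$ for all $t\in\mathds{R}$, this gives the uniform lower bound $\Re\Psi(u)\geq -2\nu(\mathds{R}^2) = -2\Lambda$, and therefore
$$|\varphi_{\nu,\alpha}(u)|\geq e^{-2\Lambda}>0,\quad u\in\mathds{R}^2,$$
so that $\inf_{u\in\mathds{R}^2}|\varphi_{\nu,\alpha}(u)|\geq e^{-2\Lambda}>0$, which is the claim.

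There is essentially no hard step here; the only points requiring a word of care are the integrability needed to split off the real part (immediate from $\nu$ being finite) and the elementary inequality $\cos t\geq -1$. This is exactly the CPP analogue of Lemma \ref{InfDecayRateLem}, with the crucial difference that the absence of both a Gaussian part and a compensator term $-i\left<u,x\right>$ removes the quadratic growth in $|u|$ from $|\Psi(u)|$, leaving a bound uniform in $u$ rather than merely $e^{-C(1+|u|)^2}$.
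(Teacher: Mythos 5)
Your proof is correct and follows essentially the same approach as the paper: both pass to the exponential representation with the finite measure $\nu$, drop the purely imaginary drift, and bound the real part of the exponent from below by $-2\nu(\mathds{R}^2)$, arriving at the identical lower bound $e^{-2\nu(\mathds{R}^2)}$. Your write-up merely spells out the $\Re\Psi$ computation that the paper leaves implicit.
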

\begin{proof}
We have
$$|\varphi_{\nu,\alpha}(u)| =
\left|\exp\left(\int_{\mathds{R}^2}\left(e^{i\left<u,x\right>}-1\right)\,\nu(dx)\right)\right|\geq
\exp(-2\nu(\mathds{R}^2))>0.$$
\end{proof}
Using this, we can prove the following theorem with the same technique as Theorem \ref{LevyEstTh}. Set $h_n\gl n^{-\frac{1}{2}}$.

\begin{theorem} \label{CompoundJmpEstTh}
It holds under the Assumptions \ref{kernelAssum} and \ref{CompoundAssum} (i)-(iii) the asymptotic
$$\sup_{(a,b)\in\mathfrak{R}} |(a,b)|^2\wedge |(a,b)|^4 |\nu([a,\infty)\times [b,\infty))-\widehat{N}_n(a,b)| = O_{P_{\nu,\alpha}}\left(\frac{(\log n)^5}{\sqrt{n}}\right),\quad n\to\infty.$$
\end{theorem}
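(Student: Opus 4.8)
The plan is to rerun the three-step proof of Theorem~\ref{LevyEstTh} almost verbatim, making only the two structural substitutions forced by the CPP setting. First, Lemma~\ref{InfDecayRateLem} is replaced by Lemma~\ref{CPPDecayLem}: there is a constant $c>0$ with $|\varphi_{\nu,\alpha}(u)|\ge c$ for all $u\in\mathds{R}^2$, so the Gaussian-type lower bound $e^{-C(1+|u|)^2}$ for $|\varphi|$ becomes a uniform one. Second, in place of (\ref{charrepeq})--(\ref{PsiEq}) one uses the CPP representation $\Psi(u)=i\langle u,\alpha\rangle+\int_{\mathds{R}^2}(e^{i\langle u,x\rangle}-1)\,\nu(dx)$, which has neither a Brownian part nor a compensator: differentiating under the integral and invoking Assumption~\ref{CompoundAssum}(ii), each derivative $\partial^l\Psi/\partial u_k^l$ with $l=1,\dots,4$ and $k=1,2$ is bounded in $u$ by a finite constant depending only on $(\nu,\alpha)$. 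Through (\ref{phiDer1Eq})--(\ref{phiDer4Eq}) this upgrades (\ref{DphiEq}) to $|(\partial^l\varphi/\partial u_k^l)\varphi^{-1}|(u)\lesssim 1$, i.e. the polynomial factors $(1+|u|)^l$ drop out. Throughout we take $h_n\gl n^{-1/2}$, $A_n\gl A_{h_n,n}$, $P\gl P_{\nu,\alpha}$, and $w(u)=(\log(e+|u|))^{-1/2-\delta}$.

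\emph{Steps~1 and 2.} As before, $A_{h,n}^c\subseteq\{\exists u\in[-1/h,1/h]^2:\ d(\widehat{\varphi}_n,\varphi)/(|\varphi(u)|\,|w(u)|)\ge 1/2\}$; since $w$ is decreasing in $|u|$ and $|\varphi|\ge c$, one has $\inf_{|u|_\infty\le 1/h}|\varphi(u)|\,|w(u)|\ge c\,(\log(e+\sqrt2/h))^{-1/2-\delta}$, so Markov's inequality together with Corollary~\ref{Eboundcor} (available thanks to Assumption~\ref{CompoundAssum}(ii)) gives $P(A_n^c)\lesssim n^{-1/2}(\log n)^{1/2+\delta}\to 0$. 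For Step~2 the rearrangement (\ref{term1Eq})--(\ref{term5Eq}) is unchanged and, using the improved (\ref{DphiEq}) and the fact that $\widehat{\varphi}_n^{-1}$ is bounded on $A_n$, produces
\begin{equation*}
\left|\frac{\partial^4\widehat{\Psi}_n}{\partial u_k^4}-\frac{\partial^4\Psi}{\partial u_k^4}\right|(u)\ \lesssim\ \sum_{j=1}^{4}\left(\frac{d(\varphi,\widehat{\varphi}_n)}{w(u)\,|\varphi(u)|}\right)^{j},\qquad u\in[-1/h_n,1/h_n]^{2},\quad k=1,2,
\end{equation*}
now without the factor $(1+|u|)^4$ present in Theorem~\ref{LevyEstTh}.

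\emph{Step~3.} By Sato~\cite{Sat}[Proposition~2.5 (xii)] and (\ref{fourierMotEq}) we have $\mathcal{F}^{-1}\bigl((\partial^4\Psi/\partial u_1^4+\partial^4\Psi/\partial u_2^4)\,\mathcal{F}K_{h_n}\bigr)\lambda^2=(K_{h_n}\lambda^2)\ast((x_1^4+x_2^4)\nu)$, so on $A_n$ the quantity $|\nu([a,\infty)\times[b,\infty))-\widehat{N}_n(a,b)|$ is at most the convolution error of Lemma~\ref{convErrorLem}, namely $\lesssim|h_n\log h_n|\,(|(a,b)|^{-2}\vee|(a,b)|^{-4})$, plus, by Plancherel together with the decay bound $|\mathcal{F}g_{a,b}(u)|\lesssim|(a,b)|^{-2}(1+|u_1|)^{-1}(1+|u_2|)^{-1}$ (Corollary~\ref{gabCor}, Lemma~\ref{trivLevLem}(i)) and $|\mathcal{F}K_{h_n}|\le\|\mathcal{F}K\|_\infty$ with support in $[-1/h_n,1/h_n]^2$, a stochastic term bounded by
\begin{equation*}
|(a,b)|^{-2}\int_{\left[-\sqrt{n},\sqrt{n}\right]^{2}}\frac{1}{(1+|u_1|)(1+|u_2|)}\sum_{j=1}^{4}\left(\frac{d(\varphi,\widehat{\varphi}_n)}{w(u)\,|\varphi(u)|}\right)^{j}\lambda^{2}(du).
\end{equation*}
On this cube $|\varphi|\ge c$ and $|w(u)|^{-1}\le(\log(e+|u|))^{1/2+\delta}\lesssim(\log n)^{1/2+\delta}$, while $d(\varphi,\widehat{\varphi}_n)=O_P(n^{-1/2})$ by Corollary~\ref{Eboundcor}; hence on an event of probability tending to $1$ the $j$-sum is $\lesssim d(\varphi,\widehat{\varphi}_n)(\log n)^{1/2+\delta}$, and since $\int_{[-\sqrt n,\sqrt n]^2}(1+|u_1|)^{-1}(1+|u_2|)^{-1}\,\lambda^2(du)\lesssim(\log n)^2$ the stochastic term is $|(a,b)|^{-2}O_P(n^{-1/2}(\log n)^{5/2+\delta})$; also $|h_n\log h_n|\lesssim n^{-1/2}\log n$. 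Multiplying through by $\eta(a,b)\gl|(a,b)|^2\wedge|(a,b)|^4$, using the elementary identities $\eta(a,b)(|(a,b)|^{-2}\vee|(a,b)|^{-4})=1$ and $\eta(a,b)|(a,b)|^{-2}\le 1$ for all $(a,b)\in\mathfrak{R}$, taking the supremum over $\mathfrak{R}$ and fixing $0<\delta<5/2$ in $w$ yields the asserted rate $O_{P_{\nu,\alpha}}((\log n)^5/\sqrt n)$; the power $5$ is deliberately non-sharp.

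\emph{The main obstacle.} The delicate point is the change of regime rather than any single estimate. Because $|\varphi_{\nu,\alpha}|$ is bounded below, the factor $e^{C(1+|u|)^2}$ that in Theorem~\ref{LevyEstTh} forced the tiny bandwidth $h_n\asymp\log\log n/\sqrt{\log n}$ — and thereby confined the frequency cube to radius $\asymp\sqrt{\log n}$ — is absent, so with $h_n=n^{-1/2}$ the Plancherel integral runs over the large cube $[-\sqrt n,\sqrt n]^2$. The argument survives only because, in the CPP-with-drift representation, the derivatives of $\Psi$ — hence of $(\partial^l\varphi/\partial u_k^l)\varphi^{-1}$ — are genuinely bounded in $u$ rather than polynomially growing; otherwise the frequency integral would be polynomially large in $n$ and the method would fail. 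Establishing that boundedness (the improved (\ref{DphiEq})) and then checking that only powers of $\log n$ survive the integration over the enlarged cube is the crux; the remainder is routine bookkeeping.
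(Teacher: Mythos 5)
Your proof follows the paper's three-step skeleton faithfully — same event $A_n$, same cancellation of the $(1+|u|)^l$ polynomial factors in (\ref{DphiEq}) because the CPP exponent $\Psi$ has bounded partial derivatives, same Plancherel split into a bias term (Lemma~\ref{convErrorLem}) plus a stochastic term, same bandwidth $h_n=n^{-1/2}$ — and Steps~1 and~2 are correct. The genuine gap is in Step~3: the bound $|\mathcal{F}g_{a,b}(u)|\lesssim|(a,b)|^{-2}(1+|u_1|)^{-1}(1+|u_2|)^{-1}$ you invoke is neither a consequence of Corollary~\ref{gabCor} with Lemma~\ref{trivLevLem}(i), nor is it true. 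Corollary~\ref{gabCor} gives only $|\mathcal{F}g_{a,b}(u)|\lesssim\min\bigl(|(a,b)|^{-4}/|u_1u_2|\cdot\one_{(\mathds{R}^\ast)^2}(u),\,|(a,b)|^{-2}\bigr)$: the constant in front of the $u$-decaying branch is $|(a,b)|^{-4}$, not $|(a,b)|^{-2}$, and the conversion $1/|u_1u_2|\le 4/((1+|u_1|)(1+|u_2|))$ from Lemma~\ref{trivLevLem}(i) holds only on $\{|u_1|,|u_2|\ge 1\}$, so near the coordinate axes the Corollary bound has no $u$-decay at all. Concretely, with $(a,b)=(\epsilon,0)$ and $u=(M,0)$ one has $\mathcal{F}g_{\epsilon,0}(M,0)=c\int_\epsilon^\infty e^{iMx_1}x_1^{-3}\,dx_1$ (with $c=\int_0^\infty(1+t^4)^{-1}\,dt$), and for $M\epsilon$ large this is of order $\epsilon^{-3}/M$, exceeding your asserted $\epsilon^{-2}(1+M)^{-1}$ by the unbounded factor $\asymp\epsilon^{-1}$.

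The stated conclusion nevertheless survives because $\eta(a,b)=|(a,b)|^2\wedge|(a,b)|^4$ is tailored to absorb the $|(a,b)|$-dependence of the \emph{correct} bound: $\eta(a,b)\min\bigl(|(a,b)|^{-4}/|u_1u_2|,\,|(a,b)|^{-2}\bigr)\le\min(1/|u_1u_2|,1)$ for every $(a,b)\in\mathfrak{R}$. So the repair is local: keep $|\mathcal{F}g_{a,b}(-u)|$ inside the Plancherel integral, multiply through by $\eta(a,b)$ first, and then integrate $\min(1/|u_1u_2|,1)\cdot(\log(e+|u|))^{1/2+\delta}$ over $[-\sqrt n,\sqrt n]^2$ — splitting into $[-1,1]^2$, the corner $\{|u_1|,|u_2|\ge 1\}$ where Lemma~\ref{trivLevLem}(i) applies, and the two near-axis strips $\{|u_k|<1\le |u_{3-k}|\}$ which each contribute $O((\log n)^2)$ once the $\min$ with $1$ is used — to obtain $O\bigl((\log n)^{5/2+\delta}\bigr)$ and hence $O_{P_{\nu,\alpha}}((\log n)^5/\sqrt n)$. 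As a side remark, the paper bounds the $j$-sum by pulling out the cruder factor $(\log(e+|u|))^{2+4\delta}$ and lands directly on $(\log n)^5$, whereas your observation that the $j=1$ term dominates on a high-probability event yields the tighter $(\log n)^{5/2+\delta}$; since the power of $\log n$ is non-sharp in both versions this is cosmetic, but the $\mathcal{F}g_{a,b}$ estimate as you wrote it must be replaced.
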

\begin{proof}
We only note the changes in the proof of Theorem \ref{LevyEstTh} :

First, we establish
$$P(A_n^c) = P(A_{h_n,n}^c) = P\left(A_{\frac{1}{\sqrt{n}},n}^c\right)\to 0,\quad n\to\infty.$$
Note
$$A_n^c\subseteq\left\{\exists u\in B_{\frac{1}{h_n}}\,:\,\frac{d(\widehat{\varphi}_n,\varphi)}{|\varphi(u)||w(u)|} \geq\frac{1}{2}\right\} \subseteq \left\{n^{-\frac{1}{2}}(\log(e+\sqrt{2}n^{\frac{1}{2}}))^{\frac{1}{2}+\delta}O_P(1)\geq 1\right\},\quad n\in\mathds{N},$$
so that $n^{-\frac{1}{2}} (\log n)^{\frac{1}{2}+\delta}\to 0$ yields $P(A_n^c)\to 0$. Note furthermore
$$\left|\frac{\partial \Psi}{\partial u_k}\right|(u) \lesssim 1,\quad u\in\mathds{R}^2,\quad k=1,2,$$
since
$$\int_{\mathds{R}^2} |x_k|\,\nu(dx) <\infty,\quad k=1,2,\quad \text{cf. Assumptions \ref{CompoundAssum} (i), (ii)}.$$
This is why (\ref{nuEstLastEq}) can be replaced by
\begin{eqnarray}
&&\one_{A_n} \int_{\left[-\frac{1}{h_n},\frac{1}{h_n}\right]^2} (\log(e+|u|))^{2+4\delta} |\mathcal{F} g_{a,b}(-u)|\,\lambda^2(u) O_P(n^{-\frac{1}{2}}) \nonumber\\&=&
\one_{A_n}|(a,b)|^{-2}\vee |(a,b)|^{-4} \nonumber \\&& 
\left(1+\int_{\left[-\frac{1}{h_n},\frac{1}{h_n}\right]^2} (\log(e+|u|))^{2+4\delta}(1+|u_1|)^{-1}(1+|u_2|)^{-1}\,\lambda^2(du)\right)O_P(n^{-\frac{1}{2}}). \label{CPPCopEq1}
\end{eqnarray}
Set w.l.o.g. $\delta\gl\frac{1}{4}$. Then, (\ref{CPPCopEq1}) is not larger than
\begin{eqnarray*}
&&\one_{A_n} |(a,b)|^{-2}\vee |(a,b)|^{-4} (\log n)^3\left(\int_0^{\frac{1}{h_n}} (1+x)^{-1}\,\lambda^1(dx)\right)^2 O_P(n^{-\frac{1}{2}}) \\&=&
\one_{A_n} |(a,b)|^{-2}\vee |(a,b)|^{-4} (\log n)^5 O_P(n^{-\frac{1}{2}}).
\end{eqnarray*}
Again, this proves together with Lemma \ref{convErrorLem} and $|h_n\log h_n| = \frac{\log n}{2\sqrt{n}}$ this theorem.
\end{proof}

Note that we are interested in estimating the copula $C$ of $(\nu(\mathds{R}^2))^{-1}\nu$ instead of the L\'{e}vy copula $\mathfrak{C}$ of $\nu$. Here, we do not need the principle of a L\'{e}vy copula because $\nu$ has no singularity in the origin. Nevertheless, we still treat the origin with our technique as a singularity point. This is due to the fact that we have already developed this technique for the setting of the previous Section \ref{LevCopSec}. However, it is also possible to get some considerable results in case of the compound Poisson with this technique without much extra effort.

\begin{defi}
Let Assumptions \ref{kernelAssum} and \ref{CompoundAssum} hold. Set with the same notation as in Theorem \ref{LevyGeneralTheorem}, but $\delta_n\gl (\log n)^{-1}$
\begin{equation*}
\begin{array}{rclcrclcl}
V_1(x_1) &\gl& \Lambda^{-1} \nu([0,x_1]\times \mathds{R}_+),&& V_2(x_2) &\gl& \Lambda^{-1} \nu(\mathds{R}_+\times [0,x_2]),&& x_1,x_2>0, \\
\widehat{V}_{1,n}^{-1}(u) &\gl& \widehat{U}_{1,n}^{-1}(\Lambda(1-u)),&& \widehat{V}_{2,n}^{-1}(v) &\gl& \widehat{U}_{2,n}^{-1}(\Lambda(1-v)),&& 0<u,v<1
\end{array}
\end{equation*}
and
\begin{eqnarray*}
\widehat{M}_n(a,b) &\gl& 1+\Lambda^{-1}(\widehat{N}_n(a,b)-\widehat{N}_n(a,0)-\widehat{N}_n(0,b)),\quad (a,b)\in\mathfrak{R}, \\
\widehat{C}_n(u,v) &\gl& \widehat{M}_n(\widehat{V}_{1,n}^{-1}(u),\widehat{V}_{2,n}^{-1}(v)),\quad 0<u,v<1.
\end{eqnarray*}
Let furthermore $C$ denote the unique copula of the probability measure $\Lambda^{-1}\nu$, i.e.
$$C(u,v) = M(V_1^{-1}(u),V_2^{-1}(v)),\quad 0< u,v < 1$$
with
$$M(a,b)\gl \Lambda^{-1} \nu([0,a]\times [0,b]) = 1 + \Lambda^{-1}(U(a,b) - U(a,0) - U(0,b)),\quad (a,b)\in \mathfrak{R}.$$
\end{defi}

Note that $V_k\,:\,(0,\infty)\to (0,1)$, $k=1,2$ is a bijection and that its inverse is
$$V_k^{-1}(u) = U_k^{-1}(\Lambda(1-u)),\quad u\in (0,1).$$
\begin{theorem} \label{ComPoiCopTh}
Set $\delta_n\gl (\log n)^{-1}$ and let the Assumptions \ref{kernelAssum} and \ref{CompoundAssum} hold. Then we have for arbitrary and fixed $0<a<b<1$ the asymptotic
$$\sup_{a\leq u,v\leq b} |C(u,v)-\widehat{C}_n(u,v)| = O_{P_{\nu,\alpha}}\left(\frac{(\log n)^{10}}{\sqrt{n}}\right),\quad n\to\infty.$$
\end{theorem}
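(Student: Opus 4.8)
The plan is to follow the proof of Theorem \ref{LevyGeneralTheorem} almost verbatim, substituting the finite--activity ingredients: the estimate of Theorem \ref{CompoundJmpEstTh} in place of Theorem \ref{LevyEstTh}, the offset $\delta_n\gl(\log n)^{-1}$ in place of $(\log\log n)^{-1}$, and the rate $\epsilon_n\gl(\log n)^5/\sqrt{n}$ in place of $(\log\log n)^2/\sqrt{\log n}$. As there, one first notes that $\widehat{N}_n$ may be replaced by $\Re_+\widehat{N}_n$ without spoiling Theorem \ref{CompoundJmpEstTh}, that $\Re_+\widehat{N}_n(\cdot,0),\Re_+\widehat{N}_n(0,\cdot)\in\widehat{\mathcal{C}}$, and one sets $R(x)\gl x^{-4}$, so that $\gamma_n\gl R(\delta_n)\epsilon_n=(\log n)^9/\sqrt{n}\to 0$.

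First I would invert the margins. For $x\geq\delta_n$ one has $|(x,0)|^2\wedge|(x,0)|^4\geq\delta_n^4$, so Theorem \ref{CompoundJmpEstTh} with $b=0$ gives $\sup_{x\geq\delta_n}|\Re_+\widehat{N}_n(x,0)-U_1(x)|=O_{P_{\nu,\alpha}}(\gamma_n)$. By Assumption \ref{CompoundAssum} (iv) the map $U_1\colon(0,\infty)\to(0,\Lambda)$ is a $\mathcal{C}^1$--bijection with $U_1'<0$ and $\inf_{0<x\leq1}|U_1'(x)|>0$, so Corollary \ref{LevCopInvCor}, now applied with the \emph{finite} $\Lambda$ and $\widehat{Z}_n\gl\Re_+\widehat{N}_n(\cdot,0)$, works on any compact subinterval of $(0,\Lambda)$; since $u\in[a,b]$ gives $\Lambda(1-u)\in[\Lambda(1-b),\Lambda(1-a)]\subset(0,\Lambda)$, this yields $\sup_{a\leq u\leq b}|\widehat{V}_{1,n}^{-1}(u)-V_1^{-1}(u)|=O_{P_{\nu,\alpha}}(\gamma_n)$ and likewise for the second coordinate. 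In particular $\widehat{V}_{k,n}^{-1}(u)\geq\delta_n$ (because $\widehat{U}_{k,n}^{-1}\in\mathcal{D}_{\delta_n}$) and $\sup_{a\leq u\leq b}\widehat{V}_{k,n}^{-1}(u)$ is $O_{P_{\nu,\alpha}}(1)$, so the plug--in arguments stay in a set on which Theorem \ref{CompoundJmpEstTh} is uniformly applicable.

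Next I would propagate this error through the copula. Writing $u_n\gl V_1\circ\widehat{V}_{1,n}^{-1}(u)$, $v_n\gl V_2\circ\widehat{V}_{2,n}^{-1}(v)$, one has $C(u_n,v_n)=M(\widehat{V}_{1,n}^{-1}(u),\widehat{V}_{2,n}^{-1}(v))$, and since every copula is $1$--Lipschitz, cf. Nelsen \cite{Nel}, $|C(u,v)-C(u_n,v_n)|\leq|u-u_n|+|v-v_n|$. The mean value theorem, $V_1'=-\Lambda^{-1}U_1'$, the bound $\sup_{x>0}(1\wedge x)|U_1'(x)|<\infty$ from Assumption \ref{CompoundAssum} (iv), and $\widehat{V}_{1,n}^{-1}(u)\geq\delta_n$ together with $\inf_{a\leq u\leq b}V_1^{-1}(u)>0$ then give $\sup_{a\leq u\leq b}|u-u_n|\lesssim\delta_n^{-1}O_{P_{\nu,\alpha}}(\gamma_n)=O_{P_{\nu,\alpha}}((\log n)^{10}/\sqrt{n})$, and similarly for $v-v_n$. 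For the remaining term, $\widehat{M}_n(a,b)-M(a,b)=\Lambda^{-1}[(\widehat{N}_n(a,b)-U(a,b))-(\widehat{N}_n(a,0)-U(a,0))-(\widehat{N}_n(0,b)-U(0,b))]$; evaluating at $a=\widehat{V}_{1,n}^{-1}(u)$, $b=\widehat{V}_{2,n}^{-1}(v)$, which lie in $[\delta_n,O_{P_{\nu,\alpha}}(1)]$, Theorem \ref{CompoundJmpEstTh} (again using that the relevant weight $|(a,b)|^2\wedge|(a,b)|^4$ is $\gtrsim\delta_n^4$) gives $\sup_{a\leq u,v\leq b}|C(u_n,v_n)-\widehat{C}_n(u,v)|=O_{P_{\nu,\alpha}}(\gamma_n)=O_{P_{\nu,\alpha}}((\log n)^9/\sqrt{n})$. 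The triangle inequality then finishes the proof.

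Almost everything here is routine. The only genuinely new points compared with Theorem \ref{LevyGeneralTheorem} are that the pseudo--inverse $\mathcal{I}(\cdot,\delta_n)$ is applied with a finite $\Lambda$, so one must check that $z=\Lambda(1-u)$ stays in a compact subinterval of $(0,\Lambda)$ before invoking Corollary \ref{LevCopInvCor}, and the bookkeeping of the logarithmic factors: $\delta_n^{-4}=(\log n)^4$ enters through the weight $|(a,b)|^2\wedge|(a,b)|^4$ evaluated at scale $\delta_n$, and the extra $\delta_n^{-1}=\log n$ comes from differentiating $V_1$ near $\delta_n$, together producing the exponent $10$. The main (mild) obstacle is ensuring that the plug--in arguments $\widehat{V}_{k,n}^{-1}(u)$ are uniformly bounded in probability from above, which is exactly what Step~1 combined with Corollary \ref{LevCopInvCor} guarantees.
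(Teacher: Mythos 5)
Your proof is correct and follows essentially the same route as the paper: Theorem \ref{CompoundJmpEstTh} plus $R(x)=x^{-4}$ to get $\gamma_n=(\log n)^9/\sqrt{n}$, Corollary \ref{LevCopInvCor} applied with finite $\Lambda$ for the marginal inversions, the $1$-Lipschitz property of $C$ together with the mean value theorem and $\widehat{U}_{k,n}^{-1}\in\mathcal{D}_{\delta_n}$ to propagate the error (picking up the extra $\delta_n^{-1}$), and the triangle inequality to finish. One small remark: the ``upper boundedness in probability'' of $\widehat{V}_{k,n}^{-1}(u)$ that you flag as the main obstacle is actually not needed, since the supremum in Theorem \ref{CompoundJmpEstTh} runs over all of $\mathfrak{R}$ and the weight $|(a,b)|^2\wedge|(a,b)|^4$ becomes $|(a,b)|^2$ for large arguments, so only the lower bound $\widehat{V}_{k,n}^{-1}\geq\delta_n$ (i.e.\ (\ref{CompoundCopEstEqTilde})) is genuinely used.
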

\begin{proof}
We imitate in the following the proof of Theorem \ref{LevyGeneralTheorem}. First Theorem \ref{CompoundJmpEstTh} yields with
$$\epsilon_n\gl \frac{(\log n)^5}{\sqrt{n}},\quad R(x)\gl x^{-4},\quad x>0$$
and the notation
$$\gamma_n\gl R(\delta_n)\epsilon_n = \frac{(\log n)^9}{\sqrt{n}}\to 0,\quad n\to\infty$$
the asymptotic
$$\sup_{x\geq \delta_n} |\Re_+ \widehat{N}_n(x,0)-U_1(x)| = O_{P_{\nu,\alpha}}(\gamma_n),\quad n\to\infty.$$
Next Corollary \ref{LevCopInvCor} implies, since the Assumption \ref{CompoundAssum} (iv) holds
\begin{equation} \label{CompoundCopEstEq1}
\sup_{a\leq u\leq b} |\widehat{V}^{-1}_{1,n}(u)-V_1^{-1}(u)| = \sup_{\Lambda(1-b) \leq u\leq \Lambda(1-a)} |\widehat{U}^{-1}_{1,n}(u) - U_1^{-1}(u)| = O_{P_{\nu,\alpha}}(\gamma_n),\quad n\to\infty.
\end{equation}
Again as in Theorem \ref{LevyGeneralTheorem}, set
$$u_n\gl V_1\circ \widehat{V}_{1,n}^{-1}(u),\quad v_n\gl V_2\circ \widehat{V}_{2,n}^{-1}(v),\quad a\leq u,v\leq b$$
and note that a copula also is Lipschitz continuous, cf. Nelsen \cite{Nel}[Theorem 2.2.4.]. In particular, we have
\begin{equation} \label{CompoundCopEstEq2}
|C(u,v)-C(u',v')| \leq |u-u'| + |v-v'|,\quad 0<u,u',v,v'<1.
\end{equation}
Together with the mean value theorem and
\begin{equation} \label{CompoundCopEstEqTilde}
\widehat{U}_{1,n}^{-1} \in \mathcal{D}_{\delta_n} \implies \inf_{0<u<1} \widehat{V}^{-1}_{1,n}(u)\geq \inf_{u>0} \widehat{U}_{1,n}^{-1}(u) = \delta_n,
\end{equation}
we have for $a\leq u\leq b$
$$|u-u_n| = |V_1\circ V_1^{-1}(u) - V_1\circ \widehat{V}_{1,n}^{-1}(u)| = |V_1^{-1}(u)-\widehat{V}_{1,n}^{-1}(u)||V_1'(x)|,\quad x\in [V_1^{-1}(a)\wedge \delta_n,\infty).$$
So (\ref{CompoundCopEstEq1}) yields together with the Assumption \ref{CompoundAssum} (iv)
\begin{equation} \label{CompoundCopEstEq3}
\sup_{a\leq u\leq b} |u-u_n| = \delta_n^{-1} O_{P_{\nu,\alpha}}(\gamma_n),\quad n\to\infty.
\end{equation}
Hence (\ref{CompoundCopEstEq2}) and (\ref{CompoundCopEstEq3}) imply
\begin{equation} \label{CompoundCopEstEq4}
\sup_{a\leq u,v\leq b} |C(u,v)-C(u_n,v_n)|\leq \sup_{a\leq u\leq b} |u-u_n| + \sup_{a\leq v\leq b} |v-v_n| = \delta_n^{-1} O_{P_{\nu,\alpha}}(\gamma_n),\quad n\to\infty.
\end{equation}
Furthermore, Theorem \ref{CompoundJmpEstTh} yields together with (\ref{CompoundCopEstEqTilde})
\begin{eqnarray}
\sup_{a\leq u,v\leq b} |C(u_n,v_n)-\widehat{C}_n(u,v)| &=& \sup_{a\leq u,v\leq b} |M(\widehat{V}_{1,n}^{-1}(u),\widehat{V}^{-1}_{2,n}(v)) - \widehat{M}_n(\widehat{V}^{-1}_{1,n}(u),\widehat{V}^{-1}_{2,n}(v))| \nonumber \\&=&
O_{P_{\nu,\alpha}}(\gamma_n),\quad n\to\infty. \label{CompoundCopEstEq5}
\end{eqnarray}
Finally, (\ref{CompoundCopEstEq4}) and (\ref{CompoundCopEstEq5}) prove this theorem.
\end{proof}

\appendix  
\section*{Appendix}
\setcounter{Atheorem}{0}
\setcounter{section}{1}

First, we give a proof of Theorem \ref{Eboundth}. Since empirical process theory is needed, we briefly repeat in the following some definitions and an important result in this context. The respective notations in Van Der Vaart \cite{Vaa} are used:  

Let $(\mathcal{X},\mathcal{A},P)$ be a probability space and let $\mathcal{F}$ be a class of measurable functions $f\,:\,\mathcal{X}\to \mathds{R}$ in $L^2(P)$. Fix any $\epsilon>0$ and let $l,u\,:\,\mathcal{X}\to\mathds{R}$ be two functions in $L^2(P)$ with $\int (l-u)^2\,dP < \epsilon^2$. Then,
$$[l,u]\gl \{f\,:\, \mathcal{X}\to \mathds{R},\quad \text{measurable},\quad l\leq f\leq u\}$$
is called an $\epsilon$-bracket. Denote further with $N_{[]}(\epsilon,\mathcal{F})$ the minimum number of such $\epsilon$-brackets needed to cover $\mathcal{F}$. Note that $l$ and $u$ are not required to belong to $\mathcal{F}$. Next 
$$J_{[]}(\delta,\mathcal{F}) \gl \int_0^\delta \sqrt{\log N_{[]}(\epsilon,\mathcal{F})}\,d\epsilon,\quad \delta >0$$
is called the bracketing integral. Let $(X_i)_{i\in\mathds{N}}$ be a sequence of i.i.d. $\mathcal{X}$-valued and $P$ distributed random variables and set for $n\in\mathds{N}$
$$G_n f\gl \frac{1}{\sqrt{n}} \sum_{i=1}^n(f(X_i)-Ef(X_i)),\quad \|G_n\|_{\mathcal{F}}\gl \sup_{f\in\mathcal{F}} |G_n f|.$$
Then we have

\newtheorem*{VaartCor}{Corollary 19.35 in Van Der Vaart \cite{Vaa}}

\begin{VaartCor} 
For any class $\mathcal{F}$ of measurable functions with envelope function $F$, it holds
\begin{equation} \label{EAstEq}
E^\ast \|G_n\|_\mathcal{F} \lesssim J_{[]}\left(\sqrt{\int |F|^2\,dP}, \mathcal{F}\right).
\end{equation}
\end{VaartCor}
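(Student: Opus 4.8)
This is the classical bracketing maximal inequality of Ossiander, and the plan is to prove it by a chaining argument with adapted truncation (the line of Van der Vaart \cite{Vaa}, \S 19.2). Write $\|g\|_{P,r}\gl(\int|g|^r\,dP)^{1/r}$ and $J\gl J_{[]}(\|F\|_{P,2},\mathcal F)$; we may assume $J<\infty$. First I would reduce to $\|F\|_{P,2}=1$: replacing $\mathcal F$ by $\mathcal F/\|F\|_{P,2}$ scales $\|G_n\|_{\mathcal F}$ and $J$ by $\|F\|_{P,2}^{-1}$ and leaves the envelope normalised, because $N_{[]}(\epsilon,\mathcal F/c)=N_{[]}(c\epsilon,\mathcal F)$. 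Next, for each $q\in\mathds{N}_0$ I would fix a partition $\mathcal F=\bigcup_{i=1}^{N_q}\mathcal F_{q,i}$ with $N_q\le N_{[]}(2^{-q},\mathcal F)$ in which each $\mathcal F_{q,i}$ lies in a bracket $[\ell_{q,i},u_{q,i}]$ with $-F\le\ell_{q,i}\le u_{q,i}\le F$ and $\|u_{q,i}-\ell_{q,i}\|_{P,2}\le2^{-q}$; I would take these partitions nested in $q$, with $q=0$ trivial ($N_0=1$, bracket $[-F,F]$). For $f\in\mathcal F$ set $\pi_qf\gl\ell_{q,i}$ and $\Delta_qf\gl u_{q,i}-\ell_{q,i}$ for the $i$ with $f\in\mathcal F_{q,i}$; then $0\le f-\pi_qf\le\Delta_qf$ pointwise, $\|\Delta_qf\|_{P,2}\le2^{-q}$, nesting gives $|\pi_qf-\pi_{q-1}f|\le\Delta_{q-1}f$, and $\pi_qf,\Delta_qf$ take at most $N_q$ values.

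With thresholds $a_q\gl2^{-q}(1+\log N_{q+1})^{-1/2}$ and cumulative indicators $B_qf\gl\prod_{j=0}^{q-1}\mathbf 1\{\Delta_jf\le\sqrt n\,a_j\}$ (pointwise, $B_0f\equiv1$), I would use the decomposition
$$f-\pi_0f=\sum_{q\ge1}(\pi_qf-\pi_{q-1}f)B_qf+\sum_{q\ge0}(f-\pi_qf)(B_qf-B_{q+1}f).$$
At each point this is valid: the first level $\tau$ with $\Delta_\tau f>\sqrt n\,a_\tau$ splits the telescoping chain into the head $\pi_\tau f-\pi_0f$ plus the single remainder $f-\pi_\tau f$, while on $\{\tau=\infty\}$ the tail vanishes and $\Delta_qf\le\sqrt n\,a_q\to0$ forces $\pi_qf\to f$. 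Applying the linear functional $G_n$ and the triangle inequality, and noting that each supremum over $f$ below is a maximum over finitely many (measurable) functions, so that countable subadditivity of $E^\ast$ applies, it then suffices to bound $E|G_nF|$, the series $\sum_{q\ge1}E^\ast\sup_f|G_n((\pi_qf-\pi_{q-1}f)B_qf)|$, and $E^\ast\sup_f|G_n(\sum_{q\ge0}(f-\pi_qf)(B_qf-B_{q+1}f))|$.

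For the base term, $E|G_nF|\le\|F\|_{P,2}=1\lesssim J$ — in the degenerate case $J\lesssim1$ the assertion is immediate, so I assume $J\gtrsim1$. For the $q$-th link, $h_{q,f}\gl(\pi_qf-\pi_{q-1}f)B_qf$ satisfies $|h_{q,f}|\le\Delta_{q-1}f\,\mathbf 1\{\Delta_{q-1}f\le\sqrt n\,a_{q-1}\}\le\sqrt n\,a_{q-1}$ and $\|h_{q,f}\|_{P,2}\le2^{-(q-1)}$, and by nesting ranges over at most $N_q$ functions. Bernstein's inequality gives $P(|G_nh|>t)\le2\exp(-\tfrac12t^2/(4^{-(q-1)}+a_{q-1}t))$, i.e. $G_nh$ is sub-Gaussian with parameter $\asymp2^{-q}$ and sub-exponential with parameter $\asymp a_{q-1}$; the standard finite-maximum inequality then yields $E^\ast\sup_f|G_nh_{q,f}|\lesssim2^{-q}\sqrt{1+\log N_q}+a_{q-1}(1+\log N_q)$. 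The choice of $a_{q-1}$ balances the two summands to $\asymp2^{-q}\sqrt{1+\log N_q}$, and since $\epsilon\mapsto\sqrt{\log N_{[]}(\epsilon,\mathcal F)}$ is nonincreasing, $\sum_{q\ge1}2^{-q}\sqrt{1+\log N_q}\lesssim1+\int_0^1\sqrt{\log N_{[]}(\epsilon,\mathcal F)}\,d\epsilon\lesssim J$.

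Finally, the remainder $g_f\gl\sum_{q\ge0}(f-\pi_qf)(B_qf-B_{q+1}f)$ satisfies $0\le g_f\le\sum_{q\ge0}\Delta_qf\,\mathbf 1\{\Delta_qf>\sqrt n\,a_q\}\le(\sqrt n)^{-1}\sum_{q\ge0}a_q^{-1}(\Delta_qf)^2$ pointwise. Its mean contribution is $\sqrt n\,E g_f\le\sum_{q\ge0}a_q^{-1}\|\Delta_qf\|_{P,2}^2\le\sum_{q\ge0}a_q^{-1}4^{-q}=\sum_{q\ge0}2^{-q}\sqrt{1+\log N_{q+1}}\lesssim J$, the $\sqrt n$ genuinely cancelling. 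For the empirical contribution I would, at each point, retain only the single active level $\tau$ and use $\Delta_\tau f\le\Delta_{\tau-1}f\wedge2F$ to replace $g_f$ by a truncated envelope, so that the supremum over $f$ reduces to maxima over the $N_q$ values of $\Delta_qf$ and is dominated by first moments of the form $\sqrt n\,E^\ast[F\,\mathbf 1\{F>\sqrt n\,a_q\}]\le a_q^{-1}\|F\|_{P,2}^2$, which telescope against the same series. Adding the three contributions gives $E^\ast\|G_n\|_{\mathcal F}\lesssim J=J_{[]}(\|F\|_{P,2},\mathcal F)$. The main obstacle is precisely this last step: arranging the adapted truncation so that the bounded links and the single overflow tail both collapse, uniformly in $n$, into the dyadic sum $\sum_q2^{-q}\sqrt{1+\log N_q}$ — the delicate points being the joint choice of the $a_q$ (balancing the sub-Gaussian against the sub-exponential terms), the exact cancellation of $\sqrt n$ in the mean of the tail, and the $E^\ast$/measurability bookkeeping; the reduction to $\|F\|_{P,2}=1$, the nested bracketing construction, and the comparison of the dyadic sum with $J_{[]}$ are routine by comparison.
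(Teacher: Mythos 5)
The paper does not prove this statement at all: it is quoted verbatim from Van der Vaart \cite{Vaa} (Corollary 19.35) and used as an imported tool, so there is no internal proof to compare against, and your sketch reconstructs exactly the standard chaining-with-adapted-truncation (Ossiander-type) argument by which that corollary is proved in the cited reference — nested bracketing partitions, thresholds $a_q$ balancing the Bernstein sub-Gaussian and sub-exponential terms, a finite-maximum inequality over the $N_q$ links, the pointwise validity of the decomposition via the first overflow level $\tau$, and the cancellation of $\sqrt n$ in the mean of the overflow term — all of which is correct in outline. The only imprecise point is your remark that the degenerate case $J\lesssim 1$ is ``immediate'' (for a single-function class one has $J_{[]}=0$ while $E|G_nf|>0$, so the inequality as literally stated needs the usual convention on the entropy integrand), but this wrinkle is inherited from the textbook statement itself and is harmless for the classes $G_{l,k}$ to which the paper applies the result, whose bracketing numbers grow polynomially in $\epsilon^{-1}$.
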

Note that $\lesssim$ means not larger up to a constant which does not depend on $n\in\mathds{N}$. An envelope function $F$ is any $L^2(P)$ function, such that $|f|(x)\leq F(x)$ holds for all $x\in\mathcal{X}$ and $f\in\mathcal{F}$. Finally, observe the star notation $E^\ast$ instead of $E$. This is due to certain measurability problems which are typical in empirical process theory, compare for this the first chapter in Van Der Vaart and Wellner \cite{VaaWel}. Fortunately, we are not concerned with such measurability problems in our case and, thus, can simply write $E$ instead of $E^\ast$ in (\ref{EAstEq}). In general, observe also the helpful monographs of Pollard \cite{Pol} and Dudley \cite{Dud}.

\begin{proof}[Proof of Theorem \ref{Eboundth}]
Write $\Re(z)$ for the real part of a complex number $z$ and $\Im(z)$ for its imaginary part. It suffices to prove the Theorem separately for the real and imaginary part because of
$$\left\|\frac{\partial^l}{\partial u_k^l}A_n(u)\right\|_{L^\infty(w)} \leq \left\|\frac{\partial^l}{\partial u_k^l} \Re (A_n(u))\right\|_{L^\infty(w)} + \left\|\frac{\partial^l}{\partial u_k^l} \Im (A_n(u))\right\|_{L^\infty(w)}.$$
Here, we only treat the real part because the imaginary part can be proven in exactly the same way. We have
$$\Re(A_n(u)) = n^{-\frac{1}{2}} \sum_{t=1}^n (\cos(\left<u,Z_t\right>) - E\cos(\left<u,Z_1\right>)),\quad u\in\mathds{R}^d.$$
Set
$$G_{l,k}\gl \left\{z\mapsto w(u) \frac{\partial^l}{\partial u_k^l} \cos(\left<u,z\right>)\,:\,u\in\mathds{R}^d\right\}$$
with $0\leq l \leq 4, \, 1\leq k\leq d$. Next, it is the crucial idea to apply the above Corollary 19.35 in \cite{Vaa}, i.e. empirical process theory. Here, we choose 
$$f_{l,k}(z) = |z_k|^l,\quad z\in\mathds{R}^d$$
as envelope function for the set $G_{l,k}$. Then the cited corollary implies
\begin{equation} \label{vaaupboundineq}
E \left\|\frac{\partial^l}{\partial u_k^l} \Re(A_n(u))\right\|_{L^\infty(w)} \lesssim J_{[]}\left(\sqrt{E Z_{1,k}^{2l}},\,G_{l,k}\right)
\end{equation}
in the above Notation. With
$$M\gl M(\epsilon,l,k) = \inf\left\{m>0\,:\,E(Z_{1,k}^{2l}\one_{(m,\infty)}(|Z_1|)) \leq \epsilon^2\right\}$$ 
for $\epsilon>0$, define
$$g_j^{\pm}(z)\gl \left(w(u^{(j)})\frac{\partial^l}{\partial u_k^l}\cos\left(\left<u^{(j)},z\right>\right) \pm \epsilon |z_k|^l\right)\one_{[0,M]}(|z|)\pm |z_k|^l\one_{(M,\infty)}(|z|)$$
for later defined fixed points $u^{(j)}\in\mathds{R}^d$. This yields
\begin{eqnarray*}
E(g_j^+(Z_1)-g_j^-(Z_1))^2 &\leq& E\left(4\epsilon^2 Z_{1,k}^{2l}\one_{[0,M]}(|Z_1|) + 4Z_{1,k}^{2l} \one_{(M,\infty)}(|Z_1|)\right) \\ &\leq&
4\epsilon^2(E Z_{1,k}^{2l} +1).
\end{eqnarray*}
Set $C\gl 2\sqrt{E Z_{1,k}^{2l} + 1}$. Then $[g_j^-,g_j^+]$ is a $C\cdot \epsilon$-bracket. Since we are only interested in the finiteness of the right-hand side of (\ref{vaaupboundineq}), we can assume w.l.o.g. $C=1$. Hence, $[g_j^-,g_j^+]$ is an $\epsilon$-bracket. Next we perform some calculations in order to determine the points $u^{(j)}$, such that the upper bound in (\ref{vaaupboundineq}) is finite.
Obviously, 
$$w^1(r)\gl (\log(e+r))^{-\frac{1}{2}-\delta},\quad r\geq 0$$ 
is Lipschitz continuous, so that we have
$$|w(u)-w(v)|=|w^1(|u|)-w^1(|v|)|\leq L||u|-|v|| \leq L|u-v|,\quad u,v\in\mathds{R}^d$$
for some $L>0$. With $u,z\in\mathds{R}^d$ and $|z|\leq M$, we obtain the inequality

\begin{equation} \label{empexpeq1}
\left|w(u)\frac{\partial^l}{\partial u_k^l} \cos(\left<u,z\right>) - w(u^{(j)})\frac{\partial^l}{\partial u_k^l} \cos\left(\left<u^{(j)},z\right>\right)\right| \leq |z_k|^l|u-u^{(j)}|(L+M).
\end{equation}
This yields that (\ref{empexpeq1}) is not larger than
\begin{equation} \label{empexpeq2}
|z_k|^l\min\left\{\sqrt{d}|u-u^{(j)}|_\infty (L+M),w(u)+w(u^{(j)})\right\},\quad z\in \mathds{R}^d\,:\,|z|\leq M.
\end{equation}
Set 
$$U(\epsilon)\gl\inf\left\{u>0\,:\,\sup_{v\in\mathds{R}^d\,:\,|v|\geq u} w(v) \leq \frac{\epsilon}{2}\right\}$$
and
\begin{equation} \label{Jepsdef}
J(\epsilon)\gl\inf\left\{l\in\mathds{N}\,:\,\frac{l\epsilon}{\sqrt{d} (L+M)} \geq U(\epsilon)\right\}.
\end{equation}
Now, we specify the points $u^{(j)}$ as
\begin{equation} \label{ujdef}
u^{(j)} \gl \frac{j\epsilon}{\sqrt{d}(L+M)},\quad j\in\mathds{Z}^d\,:\,|j|_\infty \leq J(\epsilon).
\end{equation}
This choice guarantees that
$$G_{l,k}\subseteq \bigcup_{j\in\mathds{Z}^d\,:\,|j|_\infty \leq J(\epsilon)} [g_j^-,g_j^+].$$
To understand this, fix any $u\in\mathds{R}^d$. If
$$|u|_\infty \leq \frac{J(\epsilon)\epsilon}{\sqrt{d} (L+M)},$$
set
$$j_u\gl\left(\left\lfloor\frac{\sqrt{d}(L+M)}{\epsilon} u_1\right\rfloor,\ldots,\left\lfloor \frac{\sqrt{d}(L+M)}{\epsilon} u_d\right\rfloor\right) \in \mathds{Z}^d$$
with $\lfloor x\rfloor = -\lfloor|x|\rfloor,\,x<0$. Note that $|j_u|_\infty\leq J(\epsilon)$.
Then
$$\sqrt{d}|u-u^{(j_u)}|_{\infty}(L+M)\leq\epsilon$$
and the corresponding function belongs to $[g_{j_u}^-,g_{j_u}^+]$.
If
$$|u|_\infty > \frac{J(\epsilon)\epsilon}{\sqrt{d}(L+M)} \geq U(\epsilon),$$
the corresponding function belongs to 
$$j_u\gl(J(\epsilon),0,\ldots,0)\in\mathds{Z}^d$$ 
because of (\ref{empexpeq2}) and 
$$w(|u|)\leq w(|u|_\infty)\leq\frac{\epsilon}{2},\quad w(|u^{(j_u)}|) = w^1\left(\frac{J(\epsilon)\epsilon}{\sqrt{d}(L+M)}\right)\leq \frac{\epsilon}{2}.$$
Note that (\ref{ujdef}) implies
\begin{equation} \label{Nestineq}
N_{[]}(\epsilon,G_{l,k}) \leq (2J(\epsilon)+1)^d.
\end{equation}
Next, elementary considerations yield
\begin{equation} \label{Mestineq}
M\leq \left(\frac{E|Z_1|^{2l+\gamma}}{\epsilon^2}\right)^\frac{1}{\gamma}.
\end{equation}
Note that (\ref{Jepsdef}) implies
\begin{equation} \label{JEstEq}
J(\epsilon) \leq \frac{U(\epsilon) \sqrt{d} (L+M)}{\epsilon} +1.
\end{equation}
The special shape of $w$, furthermore, yields
$$\log(U(\epsilon)) = \left(\frac{\epsilon}{2}\right)^{-(\delta+\frac{1}{2})^{-1}} + o(1),\quad \epsilon\to 0,$$
so that we have together with (\ref{Nestineq}), (\ref{Mestineq}) and (\ref{JEstEq})
\begin{eqnarray*}
\log(N_{[]}(\epsilon,G_{l,k})) &\leq& d \log(2J(\epsilon)+1) \\ &=&
O\left(\epsilon^{-\left(\delta+\frac{1}{2}\right)^{-1}} + \log\left(\epsilon^{-1-\frac{2}{\gamma}}\right)\right),\quad \epsilon\to 0.
\end{eqnarray*}
As $\left(\delta+\frac{1}{2}\right)^{-1} < 2$, we have established
$$\int_0^{\sqrt{EZ_{1,k}^{2l}}} \sqrt{\log(N_{[]}(\epsilon,G_{l,k}))}\,d\epsilon < \infty$$
and (\ref{vaaupboundineq}) is finite.
\end{proof}

Proposition \ref{FourierProp} is a generalization of the following Proposition \ref{FourierOrgProp} which can be proven with standard results from Fourier analysis.

\begin{Aprop} \label{FourierOrgProp}
Let $f\,:\,\mathds{R}^2\to\mathds{R}$ be a Schwartz space function. Then it holds
\begin{equation} \label{SchwartzIneq}
|\mathcal{F}f|(u) \leq \frac{1}{|u_1u_2|} \int_{\mathds{R}^2} \left|\frac{\partial^2 f}{\partial x_1 \partial x_2}(x)\right|\,\lambda^2(dx),\quad u\in (\mathds{R}^\ast)^2.
\end{equation} 
\end{Aprop}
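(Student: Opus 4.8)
The plan is a double integration by parts, one in each coordinate direction. Since $f$ is a Schwartz function, $|f|\in L^1(\mathds{R}^2)$, so by Fubini's theorem
$$\mathcal{F}f(u)=\int_{\mathds{R}}e^{iu_2x_2}\left(\int_{\mathds{R}}e^{iu_1x_1}f(x_1,x_2)\,dx_1\right)dx_2,\quad u\in(\mathds{R}^\ast)^2.$$
First I would integrate by parts in the inner variable: for fixed $x_2$ and $u_1\neq 0$, writing $e^{iu_1x_1}=\frac{d}{dx_1}\bigl(\frac{1}{iu_1}e^{iu_1x_1}\bigr)$ and using that $f$ decays faster than any polynomial so the boundary terms at $x_1=\pm\infty$ vanish,
$$\int_{\mathds{R}}e^{iu_1x_1}f(x_1,x_2)\,dx_1=-\frac{1}{iu_1}\int_{\mathds{R}}e^{iu_1x_1}\frac{\partial f}{\partial x_1}(x_1,x_2)\,dx_1.$$
As a function of $x_2$, the right-hand integral is dominated by $\int_{\mathds{R}}\bigl|\tfrac{\partial f}{\partial x_1}(x_1,x_2)\bigr|\,dx_1$, which tends to $0$ as $|x_2|\to\infty$ because $\partial f/\partial x_1$ is again Schwartz; so I may repeat the same step in $x_2$ with $u_2\neq 0$, differentiating under the integral sign (legitimate by dominated convergence), to obtain
$$\mathcal{F}f(u)=\left(-\frac{1}{iu_1}\right)\left(-\frac{1}{iu_2}\right)\int_{\mathds{R}^2}e^{i\langle u,x\rangle}\frac{\partial^2 f}{\partial x_1\partial x_2}(x)\,\lambda^2(dx)=-\frac{1}{u_1u_2}\,\mathcal{F}\!\left(\frac{\partial^2 f}{\partial x_1\partial x_2}\right)(u).$$

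Taking absolute values and using the elementary bound $|\mathcal{F}g(u)|\leq\int_{\mathds{R}^2}|g|\,\lambda^2(dx)$ for $g\in L^1(\mathds{R}^2)$, applied to $g=\partial^2 f/\partial x_1\partial x_2$ (which is Schwartz, hence in $L^1$), yields precisely (\ref{SchwartzIneq}).

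I expect no genuine obstacle here: the only points needing justification are Fubini (immediate from $f\in L^1$), the vanishing of the boundary terms (immediate from the Schwartz decay of $f$ and of $\partial f/\partial x_1$), differentiation under the integral sign, and $\partial^2 f/\partial x_1\partial x_2\in L^1$ — all automatic for Schwartz functions. The substantive content is the $|u_1u_2|^{-1}$ decay gained by moving one $x_k$-derivative past the oscillatory factor in each direction; the subsequent Proposition \ref{FourierProp} will presumably weaken ``Schwartz'' to a decay/smoothness hypothesis under which the same computation still runs after an approximation or localization argument (e.g. first on $[\delta,\infty)^2$ and then letting $\delta\downarrow 0$), with care taken of any boundary contributions that no longer vanish.
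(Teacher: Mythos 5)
Your proof is correct and takes essentially the same route as the paper: both establish the identity $u_1u_2\,\mathcal{F}f(u)=-\mathcal{F}\bigl(\partial^2 f/\partial x_1\partial x_2\bigr)(u)$ and then take absolute values, the only difference being that you derive this identity by explicit double integration by parts while the paper simply cites Rudin, Theorem 7.4(c).
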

\begin{proof}
Due to Rudin \cite{Rud}[Theorem 7.4 (c)], it holds the equation
$$u_1 u_2 (\mathcal{F}f)(u) = -\mathcal{F}\left(\frac{\partial^2 f}{\partial x_1 \partial x_2}\right)(u),\quad u\in\mathds{R}^2$$
which implies
$$|\mathcal{F}f|(u) = \frac{1}{|u_1u_2|} \left|\mathcal{F}\left(\frac{\partial^2 f}{\partial x_1\partial x_2}\right)\right|(u) \leq \frac{1}{|u_1u_2|} \int_{\mathds{R}^2} \left|\frac{\partial^2 f}{\partial x_1\partial x_2}(x)\right|\,\lambda^2(dx),\quad u\in (\mathds{R}^\ast)^2.$$
\end{proof}

The situation is more involved in Proposition \ref{FourierProp} since $f$ does not need to be a Schwartz space function. Generally, it is not even a continuous function. The claim of Proposition \ref{FourierProp} states that, in this situation, a similar result as (\ref{SchwartzIneq}) also holds. We only have to take the boundaries into account.

\begin{Aprop} \label{FourierProp}
Let $g\,:\,\mathds{R}^2_+\to\mathds{R}$ be a $\mathcal{C}^2$-function with
$$g\in L^1(\mathds{R}_+^2),\quad \left|\frac{\partial g}{\partial x_j}\right|(x)\lesssim (1+|x|)^{-(1+\epsilon)},\quad\frac{\partial^2 g}{\partial x_1\partial x_2}\in L^1(\mathds{R}_+^2) ,\quad j=1,2$$
for some $\epsilon>0$ and
define a function $f\,:\,\mathds{R}^2\to\mathds{R}$ via
\begin{equation*}
f(x) =
\begin{cases}
g(x),&x\in(\mathds{R}_+^\ast)^2, \\
0,& \text{else.}
\end{cases}
\end{equation*}
Set
$$\Lambda_g\gl |g(0,0)| + \int_{\mathds{R}_+}\left|\frac{\partial g}{\partial x_1}\right|(x_1,0)\,\lambda^1(dx_1) + \int_{\mathds{R}_+} \left|\frac{\partial g}{\partial x_2}\right|(0,x_2)\,\lambda^1(dx_2) +
\int_{\mathds{R}_+^2} \left|\frac{\partial^2 g}{\partial x_1\partial x_2}\right|(x)\,\lambda^2(dx).$$
Then it holds
$$|\mathcal{F}f|(u) \leq \frac{\Lambda_g}{|u_1u_2|},\quad u\in (\mathds{R}^{\ast})^2.$$
\end{Aprop}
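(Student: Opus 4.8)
The plan is to obtain the bound by integrating by parts twice — once in $x_1$ and once in $x_2$ — in the defining integral
$$\mathcal{F}f(u) = \int_{\mathds{R}_+^2} e^{i\left<u,x\right>} g(x_1,x_2)\,\lambda^2(dx),\qquad u\in(\mathds{R}^\ast)^2,$$
which is well defined since $g\in L^1(\mathds{R}_+^2)$. Before doing so I would record the decay facts that make the boundary contributions at $+\infty$ disappear: the hypothesis $\left|\tfrac{\partial g}{\partial x_j}\right|(x)\lesssim (1+|x|)^{-(1+\epsilon)}$ gives $\left|\tfrac{\partial g}{\partial x_2}(x_1,x_2)\right|\le C(1+x_2)^{-(1+\epsilon)}$ uniformly in $x_1$, so $\int_{x_2}^\infty \left|\tfrac{\partial g}{\partial x_2}(x_1,s)\right|\,ds\to 0$ uniformly in $x_1$ as $x_2\to\infty$; hence $g(x_1,x_2)$ converges uniformly in $x_1$ to a continuous limit $g_\infty(x_1)$, and $g\in L^1(\mathds{R}_+^2)$ forces $g_\infty\equiv 0$. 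Thus $g(x_1,x_2)\to 0$ uniformly in $x_1$ as $x_2\to\infty$, and, symmetrically, uniformly in $x_2$ as $x_1\to\infty$; moreover $\tfrac{\partial g}{\partial x_1}(x_1,x_2)\to 0$ as $x_2\to\infty$ directly from the gradient bound. Together with $\tfrac{\partial^2 g}{\partial x_1\partial x_2}\in L^1(\mathds{R}_+^2)$ these estimates make every integral occurring below absolutely convergent, so Fubini's theorem applies at each step.

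Then I would carry out the integrations by parts. Fixing $x_2>0$ and integrating by parts in $x_1$ gives
$$\int_0^\infty e^{iu_1x_1}g(x_1,x_2)\,dx_1 = -\frac{g(0,x_2)}{iu_1} - \frac{1}{iu_1}\int_0^\infty e^{iu_1x_1}\tfrac{\partial g}{\partial x_1}(x_1,x_2)\,dx_1,$$
the boundary term at $+\infty$ vanishing by the first paragraph. Multiplying by $e^{iu_2x_2}$, integrating in $x_2$, and integrating by parts in $x_2$ in each of the two resulting pieces — the boundary contributions at $+\infty$ again vanishing, now via $g(0,\cdot)\to 0$ and $\tfrac{\partial g}{\partial x_1}(\cdot,x_2)\to 0$ — I obtain
$$\mathcal{F}f(u) = \frac{1}{(iu_1)(iu_2)}\left(g(0,0) + \int_0^\infty e^{iu_1x_1}\tfrac{\partial g}{\partial x_1}(x_1,0)\,dx_1 + \int_0^\infty e^{iu_2x_2}\tfrac{\partial g}{\partial x_2}(0,x_2)\,dx_2 + \int_{\mathds{R}_+^2} e^{i\left<u,x\right>}\tfrac{\partial^2 g}{\partial x_1\partial x_2}(x)\,\lambda^2(dx)\right),$$
which is the exact analogue of the identity underlying Proposition~\ref{FourierOrgProp}, corrected by the three boundary terms coming from the axes. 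Taking absolute values, using $|(iu_1)(iu_2)|=|u_1u_2|$ and $|e^{i\left<u,x\right>}|=1$, and bounding each of the four summands by the corresponding term in the definition of $\Lambda_g$, yields $|\mathcal{F}f|(u)\le \Lambda_g/|u_1u_2|$.

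The genuinely delicate point is the justification of the repeated integration by parts and of Fubini, i.e.\ the careful bookkeeping of the boundary terms at infinity. A fully rigorous argument would first perform everything on the truncated square $[0,R]^2$, where integration by parts is elementary and all boundary terms are literal evaluations of $g$ and its partials, and then let $R\to\infty$; dominated convergence handles the passage to the limit thanks to the integrability facts collected at the outset (the $R$-boundary terms tending to $0$ by the uniform decay of $g$ and $\tfrac{\partial g}{\partial x_1}$). I expect this truncation-and-limit bookkeeping to be the main obstacle, though it is routine.
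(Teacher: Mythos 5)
Your argument is a genuinely different route from the paper's. The paper approximates $f$ in $L^1$ by step functions, computes the Fourier transforms of those step functions explicitly (essentially an Abel/summation-by-parts identity for rectangle indicators), and passes to the limit; you instead carry out the classical two-fold integration by parts directly in the defining integral. Your version has the advantage of producing an exact identity
$$\mathcal{F}f(u) = \frac{1}{(iu_1)(iu_2)}\left(g(0,0) + \int_0^\infty e^{iu_1x_1}\tfrac{\partial g}{\partial x_1}(x_1,0)\,dx_1 + \int_0^\infty e^{iu_2x_2}\tfrac{\partial g}{\partial x_2}(0,x_2)\,dx_2 + \int_{\mathds{R}_+^2} e^{i\left<u,x\right>}\tfrac{\partial^2 g}{\partial x_1\partial x_2}\,d\lambda^2\right)$$
that makes the constant $\Lambda_g$ transparent and mirrors Proposition \ref{FourierOrgProp}, whereas the paper's approach is more self-contained at the price of a less illuminating limit computation. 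The argument is essentially correct; I will flag two small places where the phrasing papers over real work that the truncation step must do.

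First, in the sentence ``integrating by parts in $x_2$ in each of the two resulting pieces'', the two pieces are $\,g(0,\cdot)$ and $H(x_2)\gl\int_0^\infty e^{iu_1x_1}\partial_{x_1}g(x_1,x_2)\,dx_1$, and from the hypotheses one only gets $|g(0,x_2)|\lesssim(1+x_2)^{-\epsilon}$ and $|H(x_2)|\lesssim(1+x_2)^{-\epsilon}$; if $\epsilon\le 1$ neither is in $L^1(\mathds{R}_+)$ individually, so the intermediate displayed splitting is not a sum of two separately absolutely convergent integrals. It is their sum $\,g(0,\cdot)+H(\cdot) = -iu_1\int_0^\infty e^{iu_1x_1}g(\cdot_1,x_2)\,dx_1$ that is $L^1$ by Fubini on $g$. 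This is exactly why the truncation to $[0,R]^2$ you propose at the end is not merely routine polish but the actual argument. Second, the remark that ``the $R$-boundary terms tend to $0$ by the uniform decay of $g$'' is, on its own, too weak: the edge term along $\{x_1=R\}$ is $\bigl|\int_0^R e^{iu_2x_2}g(R,x_2)\,dx_2\bigr|\le R\sup_{x_2}|g(R,x_2)|\lesssim R(1+R)^{-\epsilon}$, which does not vanish for $\epsilon\le1$. What rescues it is performing the integration by parts in $x_2$ on that edge term as well (equivalently, doing the double integration by parts on $[0,R]^2$ before letting $R\to\infty$): then the surviving pieces along $\{x_j=R\}$ are the corner values $g(R,\cdot)$, $g(\cdot,R)$ and the one-dimensional integrals of $\partial_{x_j} g$ along $x_j=R$, which are $O\bigl(R(1+R)^{-(1+\epsilon)}\bigr)\to0$. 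With that correction the truncation-and-limit scheme closes the argument exactly as you intend.
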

\begin{Arem} \label{FourierRem} \rm
Note first that $\mathcal{F}f$ is, of course, independent of the values of $f$ on the negligible set
$$\mathcal{N}_{(0,0)}\gl(\{0\}\times \mathds{R}_+)\cup (\mathds{R}_+\times \{0\}).$$

Furthermore, it holds for every $y\in\mathds{R^2}$
$$\mathcal{F}f(u) = e^{i\left<u,y\right>}\mathcal{F}(f(\cdot +y))(u) ,\quad u\in\mathds{R}^2.$$
Thus, our discontinuity set could also have been
$$\mathcal{N}_{(y_1,y_2)}\gl(\{y_1\}\times [y_2,\infty))\cup ([y_1,\infty)\times \{y_2\})$$
and the above Proposition \ref{FourierProp} remains true. The choice $y=0$ is only due to a simpler notation.
\end{Arem}

\begin{proof}[Proof of Proposition \ref{FourierProp}]
An elementary proof can be done in three steps. In the first step, we approximate $f$ with a sequence of step functions and prove the $L^1$ convergence of the sequence to the given function $f$. The second step calculates the Fourier transforms of those step functions in relation to the Fourier transform of $f$. Finally, the third step combines the results of the first two steps and proves the desired result. We omit the technical details of this proof since they are straightforward, compare for a detailed proof Palmes \cite{Pal}.
\end{proof}

\begin{Acor} \label{gabCor}
Recall 
$$g_{a,b}(x)\gl \frac{1}{x_1^4+x_2^4}\one_{[a,\infty)\times [b,\infty)}(x_1,x_2),\quad (a,b)\in\mathfrak{R},\quad x\in\mathds{R}^2$$
and
$$\mathfrak{R}\gl [0,\infty)^2\backslash\{(0,0)\}.$$
It holds for all $(a,b)\in\mathfrak{R}$ and $u\in\mathds{R}^2$ the inequality
$$|\mathcal{F}g_{a,b}|(u)\lesssim \left(\frac{|(a,b)|^{-4}}{|u_1u_2|}\one_{(\mathds{R}^\ast)^2}(u)\right)\wedge |(a,b)|^{-2}$$
where the constant in the above $\lesssim$ is independent of $(a,b)\in\mathfrak{R}$.
\end{Acor}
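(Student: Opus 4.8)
The plan is to prove, with constants independent of $(a,b)\in\mathfrak{R}$, the two bounds
$$|\mathcal{F}g_{a,b}|(u)\lesssim |(a,b)|^{-2}\ \ (u\in\mathds{R}^2),\qquad |\mathcal{F}g_{a,b}|(u)\lesssim \frac{|(a,b)|^{-4}}{|u_1u_2|}\ \ (u\in(\mathds{R}^\ast)^2),$$
and then take their minimum. For the first bound I use $|\mathcal{F}g_{a,b}|(u)\le\|g_{a,b}\|_{L^1(\lambda^2)}$ (note $g_{a,b}\ge 0$, and $g_{a,b}\in L^1$ because its support stays away from the origin in at least one coordinate); by the symmetry $(x_1,x_2)\leftrightarrow(x_2,x_1)$ I may assume $a=\max(a,b)>0$, so $|(a,b)|\asymp a$, and then, using $\int_0^\infty(x_1^4+x_2^4)^{-1}\,dx_2=c\,x_1^{-3}$ (substitute $x_2=x_1t$),
$$\|g_{a,b}\|_{L^1}=\int_a^\infty\!\!\int_b^\infty\frac{dx_2\,dx_1}{x_1^4+x_2^4}\le c\int_a^\infty x_1^{-3}\,dx_1\asymp a^{-2}\asymp|(a,b)|^{-2}.$$

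For the second bound, observe that $g_{a,b}$ has its discontinuities precisely on $\mathcal{N}_{(a,b)}=(\{a\}\times[b,\infty))\cup([a,\infty)\times\{b\})$, so after translating by $y=(a,b)$ it is exactly of the type treated in Proposition \ref{FourierProp}. Hence by Remark \ref{FourierRem},
$$|\mathcal{F}g_{a,b}|(u)\le\frac{\Lambda_{\tilde g}}{|u_1u_2|},\qquad u\in(\mathds{R}^\ast)^2,\qquad\text{where}\qquad \tilde g(y_1,y_2)\gl\big((y_1+a)^4+(y_2+b)^4\big)^{-1}.$$
For each fixed $(a,b)\in\mathfrak{R}$ the denominator never vanishes, so $\tilde g\in\mathcal{C}^\infty(\mathds{R}_+^2)$, one checks $\tilde g\in L^1(\mathds{R}_+^2)$ (as in the first bound), $|\partial\tilde g/\partial y_j|(y)\lesssim(1+|y|)^{-5}$ and $\partial^2\tilde g/\partial y_1\partial y_2\in L^1(\mathds{R}_+^2)$. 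The constants here may depend on $(a,b)$, which is harmless because Proposition \ref{FourierProp} delivers the \emph{explicit} constant $\Lambda_{\tilde g}$ with no further hidden dependence.

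It then remains to bound each of the four terms of $\Lambda_{\tilde g}$ by a multiple of $(a^4+b^4)^{-1}\asymp|(a,b)|^{-4}$. Writing $s=y_1+a$, $t=y_2+b$, one has $\partial_{y_1}\tilde g=-4s^3\tilde g^{\,2}$ and $\partial_{y_1}\partial_{y_2}\tilde g=32\,s^3t^3\tilde g^{\,3}$, hence $|\tilde g(0,0)|=(a^4+b^4)^{-1}$,
$$\int_0^\infty\Big|\frac{\partial\tilde g}{\partial y_1}\Big|(y_1,0)\,dy_1=\int_a^\infty\frac{4s^3}{(s^4+b^4)^2}\,ds=\frac1{a^4+b^4},$$
the symmetric $y_2$-boundary integral is again $(a^4+b^4)^{-1}$, and (substituting $v=s^4$, then $w=t^4$)
$$\int_{\mathds{R}_+^2}\Big|\frac{\partial^2\tilde g}{\partial y_1\partial y_2}\Big|\,d\lambda^2=\int_b^\infty\!\!\int_a^\infty\frac{32\,s^3t^3}{(s^4+t^4)^3}\,ds\,dt=\frac1{a^4+b^4}.$$
Thus $\Lambda_{\tilde g}\lesssim(a^4+b^4)^{-1}\asymp|(a,b)|^{-4}$ uniformly; the degenerate cases $a=0<b$ and $b=0<a$ go through by the identical computation. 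Taking the minimum of the two bounds proves the corollary.

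There is nothing deep here; the one point that needs care is that the constant in the statement be \emph{uniform} in $(a,b)$. This is why one does not argue via the $(1+|u|)^{-1}$-type decay of $\mathcal{F}g_{a,b}$, but instead invokes the explicit constant of Proposition \ref{FourierProp}, which reduces the whole matter to the elementary — if slightly fiddly, because of the moving corner $(a,b)$ — integrals displayed above.
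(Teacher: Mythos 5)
Your proof is correct and follows essentially the same route as the paper: the $|(a,b)|^{-2}$ bound via the $L^1$ norm of $g_{a,b}$, and the $|(a,b)|^{-4}/|u_1u_2|$ bound by applying Proposition \ref{FourierProp} (translated to the corner $(a,b)$ via Remark \ref{FourierRem}) and computing that each term of the resulting $\Lambda$-constant equals $(a^4+b^4)^{-1}$. You are slightly more explicit than the paper about the translation step, about why $(a,b)$-dependence in verifying the hypotheses of Proposition \ref{FourierProp} is harmless, and about the boundary value term $|\tilde g(0,0)|$, but the calculation is the same.
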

\begin{proof}
Note that we have
\begin{eqnarray*}
|\mathcal{F}g_{a,b}|(u) &\leq& \int_{[a,\infty)\times [b,\infty)} \frac{1}{x_1^4+x_2^4}\, \lambda^2(dx) \lesssim \int_{[a,\infty)\times [b,\infty)} \frac{1}{|x|^4}\,\lambda^2(dx) \\ &\lesssim& 
\int_{[|(a,b)|,\infty)} \frac{1}{r^4}r\,\lambda^1(dr) = 2^{-1}|(a,b)|^{-2}
\end{eqnarray*}
where we have used the norm equivalence in $\mathds{R}^2$ and a polar coordinate transformation.

Next, fix any $u\in(\mathds{R}^\ast)^2$ and apply Proposition \ref{FourierProp}. Observe for this purpose that we have
$$\int_{[a,\infty)}\left|\frac{\partial}{\partial x_1} \frac{1}{x_1^4+x_2^4}\right|(x_1,b)\,\lambda^1(dx_1) = -\int_{[a,\infty)}\frac{\partial}{\partial x_1}\frac{1}{x_1^4+b^4}\,\lambda^1(dx_1) = \frac{1}{a^4+b^4}$$
and analogously
$$\int_{[b,\infty)}\left|\frac{\partial}{\partial x_2} \frac{1}{x_1^4+x_2^4}\right|(a,x_2)\,\lambda^1(dx_2) = -\int_{[b,\infty)}\frac{\partial}{\partial x_2} \frac{1}{a^4+x_2^4}\,\lambda^1(dx_2) = \frac{1}{a^4+b^4}.$$
Finally, a similar consideration yields
$$\int_{[a,\infty)\times [b,\infty)} \left|\frac{\partial^2}{\partial x_1\partial x_2} \frac{1}{x_1^4+x_2^4}\right|(x)\,\lambda^2(dx) = \frac{1}{a^4+b^4}\lesssim |(a,b)|^{-4}.$$
\end{proof}

\begin{Alem} \label{trivLevLem}
Finally, we state two simple but useful facts.
\begin{itemize}
\item[(i)]
We have with $u\in\mathds{R}^2$ and $|u_1|,|u_2|\geq 1$ the inequality
\begin{equation} \label{u1u2Eq}
\frac{1}{|u_1u_2|}\leq \frac{4}{(1+|u_1|)(1+|u_2|)}.
\end{equation}
\item[(ii)]
It holds for every $0<h<\frac{1}{2}$
$$\int_{\mathds{R}} \frac{\min(h|z|,1)}{(1+|z|)^2}\,dz \lesssim |h\log h|.$$ 
\end{itemize}
\end{Alem}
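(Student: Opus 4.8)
The plan is to dispatch the two assertions independently, since each reduces to an elementary one–variable computation.

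For (i) I would simply note that the hypothesis $|u_k|\geq 1$ forces $1+|u_k|\leq 2|u_k|$, and hence $|u_k|^{-1}\leq 2(1+|u_k|)^{-1}$ for $k=1,2$. Multiplying these two inequalities gives $|u_1u_2|^{-1}\leq 4\,[(1+|u_1|)(1+|u_2|)]^{-1}$, which is precisely (\ref{u1u2Eq}). Nothing further is needed.

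For (ii) the strategy is to use that the integrand is even and then split at the point $z=1/h$ where the truncating minimum changes regime. Writing $\int_{\mathds{R}}\frac{\min(h|z|,1)}{(1+|z|)^2}\,dz = 2\int_0^\infty\frac{\min(hz,1)}{(1+z)^2}\,dz$ and decomposing $[0,\infty)=[0,1/h]\cup(1/h,\infty)$, on the first piece $\min(hz,1)=hz$ and on the second $\min(hz,1)=1$. The tail contribution $\int_{1/h}^\infty (1+z)^{-2}\,dz = h/(1+h)\leq h$ is of order $|h\log h|$, because $|\log h|\geq\log 2$ for $0<h<\tfrac12$. For the main piece I would use the explicit primitive $\int\frac{z}{(1+z)^2}\,dz=\log(1+z)+\frac{1}{1+z}$, so that $h\int_0^{1/h}\frac{z}{(1+z)^2}\,dz = h\bigl(\log(1+1/h)+\frac{h}{1+h}-1\bigr)$; since $\log(1+1/h)=\log(1+h)+|\log h|$ and the remaining terms are bounded on $(0,\tfrac12)$, this is $\lesssim h|\log h|=|h\log h|$ (recall $\log h<0$ there). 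Adding the two contributions finishes (ii).

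There is no genuine obstacle here — the statement is a routine auxiliary estimate. The only points requiring a little care are the sign bookkeeping, namely that on $(0,\tfrac12)$ one has $|h\log h|=-h\log h$ with $|\log h|$ bounded below by $\log 2$, which is exactly what allows the lower–order terms $h$ and $h/(1+h)$ to be absorbed into the bound $|h\log h|$, and keeping track of which branch of $\min(h|z|,1)$ is active on each subinterval.
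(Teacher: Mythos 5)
Your proof is correct, and the paper itself omits the argument (``elementary and therefore omitted''), so there is no competing approach to compare against: part (i) via $1+|u_k|\leq 2|u_k|$ for $|u_k|\geq 1$, and part (ii) via the even-integrand reduction, the split at $z=1/h$, the explicit primitive $\log(1+z)+\tfrac{1}{1+z}$, and absorbing the $O(h)$ remainders into $|h\log h|$ using $|\log h|\geq\log 2$, is exactly the routine computation the authors had in mind.
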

\begin{proof}
The proof is elementary and therefore omitted.
\end{proof}

{\bf Acknowledgements.} The financial support of the Deutsche Forschunsgemeinschaft (FOR 916, project B4) is gratefully acknowledged.

\end{document}